\newcommand{\wtcS}{\widetilde{\calS}}
	\def\MR#1{}
\title[Generalizations of the $Q$-prime curvature]{Generalizations of the $Q$-prime curvature via renormalized characteristic forms}
\author{Yuya Takeuchi}
\address{Division of Mathematics \\ Faculty of Pure and Applied Sciences \\ University of Tsukuba
	\\ 1-1-1 Tennodai, Tsukuba, Ibaraki 305-8571 Japan}
\email{ytakeuchi@math.tsukuba.ac.jp, yuya.takeuchi.math@gmail.com}
\subjclass[2010]{32V05, 53A55}
\keywords{$Q$-prime curvature, $P$-prime operator, critical CR GJMS operator, renormalized characteristic form}
\thanks{This work was supported by JSPS KAKENHI Grant Number JP21K13792.}
\begin{document}

\begin{abstract}
	The $Q$-prime curvature is a local pseudo-Einstein invariant on CR manifolds
	defined by Case and Yang, and Hirachi.
	Its integral,
	the total $Q$-prime curvature,
	gives a non-trivial global CR invariant.
	On the other hand,
	Marugame has constructed a family of global CR invariants
	via renormalized characteristic forms,
	which contains the total $Q$-prime curvature.
	In this paper,
	we introduce a generalization of the $Q$-prime curvature
	for each renormalized characteristic form,
	and show that its integral coincides with Marugame's CR invariant.
	We also study generalizations of the critical CR GJMS operator and the $P$-prime operator,
	which are related to the transformation laws of our new curvatures under conformal change.
\end{abstract}

\maketitle

\section{Introduction}
\label{section:introduction}

In the seminal work~\cite{Fefferman1974},
Fefferman has proved that
two bounded strictly pseudoconvex domains in $\bbC^{n + 1}$ are biholomorphic
if and only if their boundaries,
which are strictly pseudoconvex CR manifolds,
are CR equivalent.
Since then,
there have been extensive researches on invariants for strictly pseudoconvex CR manifolds.
Here we give some examples of global CR invariants that are related to our results.
For simplicity,
we consider only the boundary $M$ of a bounded strictly pseudoconvex domain $\Omega \subset \bbC^{n + 1}$
in this section.

The first example is the boundary term of the renormalized Gauss-Bonnet-Chern formula.
Fefferman~\cite{Fefferman1976} has constructed a defining function $\rho$ of $\Omega$
solving an asymptotic complex Monge-\Ampere equation.
Consider the \Kahler form
\begin{equation}
	\omega_{+}
	\coloneqq - d d^{c} \log (- \rho)
\end{equation}
near the boundary,
where $d^{c} = (\sqrt{- 1} / 2) (\delb - \del)$.
The Chern connection with respect to $\omega_{+}$ diverges on the boundary
since so does $\omega_{+}$.
Burns and Epstein~\cite{Burns-Epstein1990-Char} have introduced
a renormalization procedure for this connection,
which gives a connection smooth up to the boundary.
Note that this procedure is an example of $c$-projective compactifications~\cite{Cap-Gover2019}.
Denote by $\Theta$ the curvature form with respect to the renormalized connection.
Burns and Epstein has proved the following renormalized Gauss-Bonnet-Chern formula
by the method of homological sections:
\begin{equation}
	\int_{\Omega} c_{n + 1}(\Theta)
	= \chi(\Omega) + \mu(M).
\end{equation}
Here $\mu(M)$ is the boundary correction,
which gives a global CR invariant of $M$.
Marugame~\cite{Marugame2016} has generalized this $\mu(M)$
to strictly pseudoconvex CR manifolds admitting pseudo-Einstein contact forms
by a similar argument to Chern's original proof of the Gauss-Bonnet-Chern formula.

The next example is the total $Q$-prime curvature.
We first refer to the critical CR GJMS operator and the $P$-prime operator for the later use.
Gover and Graham~\cite{Gover-Graham2005} have introduced the \emph{critical CR GJMS operator} $P$
by using the Fefferman conformal structure.
This operator is a formally self-adjoint CR invariant linear differential operator.
It follows from the definition that $P$ annihilates CR pluriharmonic functions.
Based on this fact,
Case and Yang~\cite{Case-Yang2013} and Hirachi~\cite{Hirachi2014}
have defined the \emph{$P$-prime operator} $P^{\prime}$,
the ``secondary'' version of $P$.
This is a linear differential operator acting on CR pluriharmonic functions,
and transforms as follows under the conformal change $\whxth = e^{\Upsilon} \theta$:
\begin{equation}
\label{eq:transformation-of-P-prime}
	e^{(n + 1) \Upsilon} \whP^{\prime} f
	= P^{\prime} f + P(\Upsilon f),
\end{equation}
where $\whP^{\prime}$ is defined in terms of $\whxth$.
Moreover,
they have introduced the \emph{$Q$-prime curvature} $Q^{\prime}$,
the ``secondary'' version of the CR $Q$-curvature~\cite{Fefferman-Hirachi2003}.
The $Q$-prime curvature is a smooth function defined for each pseudo-Einstein contact form,
and has the following transformation rule
under the change of pseudo-Einstein contact forms $\whxth = e^{\Upsilon} \theta$:
\begin{equation}
\label{eq:transformation-of-Q-prime}
	e^{(n + 1) \Upsilon} \whQ^{\prime}
	= Q^{\prime} + 2 P^{\prime} \Upsilon + P \Upsilon^{2},
\end{equation}
where $\whQ^{\prime}$ is defined in terms of $\whxth$.
Note that $\Upsilon$ is a CR pluriharmonic function.
Moreover,
the integral $\ovQ^{\prime} \coloneqq \int_{M} Q^{\prime} \, \theta \wedge (d \theta)^{n}$,
the \emph{total $Q$-prime curvature},
is independent of the choice of pseudo-Einstein contact forms
and gives a global CR invariant of $M$~\cites{Case-Yang2013,Hirachi2014,Marugame2018}.
Furthermore,
this invariant satisfies the equality
\begin{equation}
	\lp \int_{\rho < - \varepsilon} d \log (- \rho) \wedge d^{c} \log (- \rho) \wedge \omega_{+}^{n}
	= \frac{(- 1)^{n}}{2 (n!)^{2}} \ovQ^{\prime},
\end{equation}
where $\lp$ stands for the coefficient of the $\log \varepsilon$ term.

The last example is a family of ``renormalized characteristic numbers.''
Let $\Phi$ be an $\Ad$-invariant polynomial on $\mathfrak{gl}(n + 1, \bbC)$
homogeneous of degree $m$ with $0 \leq m \leq n$.
Marugame~\cite{Marugame2021} has proved that
\begin{equation}
	\scrI_{\Phi}
	\coloneqq \lp \int_{\rho < - \varepsilon} d \log (- \rho) \wedge d^{c} \log (- \rho)
		\wedge \omega_{+}^{n - m} \wedge \Phi(\Theta)
\end{equation}
defines a global CR invariant of $M$.
Note that the total $Q$-prime curvature corresponds to the case of $m = 0$.
Moreover if $m = n$,
he has shown that $\scrI_{\Phi}$ coincides with the integral of the $\calI_{\Phi}$-prime curvature,
which has been introduced by Case and Gover~\cite{Case-Gover2020} for $n = 2$,
and generalized to all dimensions by Marugame~\cite{Marugame2021}
and Case and the author~\cite{Case-Takeuchi2023} independently.

The aim of this paper is to introduce a generalization of the $Q$-prime curvature for each $\Phi$
such that its integral gives the invariant $\scrI_{\Phi}$.
To this end,
we consider the Dirichlet problem of the $\delb$-Laplacian with respect to $\omega_{+}$.

We will first generalize the critical CR GJMS operator $P$.
Unlike the case of $\deg \Phi = 0$,
there exist two possible generalizations.
One is the \emph{$P_{\Phi}$-operator} $P_{\Phi}$ (\cref{def:P_Phi-operator}),
which is a symmetric linear differential operator.
The other is the \emph{$\calP_{\Phi}$-operator} $\calP_{\Phi}$ (\cref{def:calP_Phi-operator}),
which is a symmetric bilinear differential operator.

Similar to the critical CR GJMS operator,
$P_{\Phi}$ annihilates CR pluriharmonic functions.
Thus we can define the \emph{$P_{\Phi}$-prime operator} $P_{\Phi}^{\prime}$,
the ``secondary'' version of the $P_{\Phi}$-operator (\cref{def:P_Phi-prime-operator}).
This $P_{\Phi}^{\prime}$ is a linear differential operator acting on CR pluriharmonic functions.
Moreover,
it satisfies a similar transformation law to \cref{eq:transformation-of-P-prime}
under the conformal change $\whxth = e^{\Upsilon} \theta$:
\begin{equation}
	e^{(n + 1) \Upsilon} \whP_{\Phi}^{\prime} f
	= P_{\Phi}^{\prime} f + \calP_{\Phi}(\Upsilon, f),
\end{equation}
where $\whP_{\Phi}^{\prime}$ is defined in terms of $\whxth$ (\cref{prop:transformation-law-of-P-prime-operator}).

As noted above,
we will also introduce the \emph{$Q_{\Phi}$-prime curvature} $Q_{\Phi}^{\prime}$
(\cref{def:Q_Phi-prime-curvature}).
This is a smooth function defined for each pseudo-Einstein contact form,
and has an analogous transformation rule to \cref{eq:transformation-of-Q-prime}
under the change of pseudo-Einstein contact forms $\whxth = e^{\Upsilon} \theta$:
\begin{equation}
	e^{(n + 1) \Upsilon} \whQ_{\Phi}^{\prime}
	= Q_{\Phi}^{\prime} + 2 P_{\Phi}^{\prime} \Upsilon + \calP_{\Phi}(\Upsilon, \Upsilon),
\end{equation}
where $\whQ_{\Phi}^{\prime}$ is defined in terms of $\whxth$
(\cref{prop:transformation-law-of-Q-prime-curvature}).
Moreover,
the integral of $Q_{\Phi}^{\prime}$,
the \emph{total $Q_{\Phi}$-prime curvature},
has similar properties to the total $Q$-prime curvature.

\begin{theorem}
\label{thm:total-Q_Phi-prime-curvature}
	The integral
	\begin{equation}
		\ovQ_{\Phi}^{\prime}
		\coloneqq \int_{M} Q_{\Phi}^{\prime} \, \theta \wedge (d \theta)^{n}
	\end{equation}
	is independent of the choice of a pseudo-Einstein contact form $\theta$,
	and defines a global CR invariant of $M$.
	Moreover,
	this invariant satisfies the following equality:
	\begin{equation}
		\scrI_{\Phi}
		= (- 1)^{n} \frac{(n + 1)^{2}}{2} \ovQ_{\Phi}^{\prime}.
	\end{equation}
\end{theorem}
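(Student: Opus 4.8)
The plan is to prove the two assertions in the reverse of the order stated: first the integral identity, for an arbitrary but fixed pseudo-Einstein contact form $\theta$, and then to read off the invariance statement from it. Once we know that $\scrI_{\Phi} = (- 1)^{n} \tfrac{(n + 1)^{2}}{2} \ovQ_{\Phi}^{\prime}$ holds for every pseudo-Einstein contact form, the first assertion is immediate: Marugame~\cite{Marugame2021} has already established that $\scrI_{\Phi}$ is a global CR invariant of $M$, hence independent of all the auxiliary choices entering its definition, so the identity forces $\ovQ_{\Phi}^{\prime}$ to be independent of $\theta$ and to depend only on the CR structure. Thus the substantive content is the identity, and it carries the invariance with it.

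To establish the identity I would fix $\theta$, take the associated Fefferman defining function $\rho$ so that $\omega_{+} = - d d^{c} \log (- \rho)$ near $M$, and unwind the definition of $Q_{\Phi}^{\prime}$ (\cref{def:Q_Phi-prime-curvature}) in terms of the solution of the Dirichlet problem for the $\delb$-Laplacian on $(\Omega, \omega_{+})$. The first step is to manipulate the integrand of $\scrI_{\Phi}$: since $\omega_{+} = - d d^{c} \log (- \rho)$, one can integrate by parts to trade the factor $d \log (- \rho) \wedge d^{c} \log (- \rho)$ for a boundary term on the hypersurface $\{ \rho = - \varepsilon \}$ together with a bulk remainder carrying an extra power of $\omega_{+}$ and a logarithmic weight $\log (- \rho)$. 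It is precisely this weighted bulk remainder that is governed by the Dirichlet solution defining $Q_{\Phi}^{\prime}$, so that the localization of $\scrI_{\Phi}$ to the boundary and the definition of $Q_{\Phi}^{\prime}$ are tied together.

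The heart of the argument is then to extract the coefficient of $\log \varepsilon$, that is, to compute $\lp$ of the resulting expression. Here I would insert the asymptotic expansion of the Monge--\Ampere solution $\rho$ and of the Dirichlet solution, whose logarithmic term produces the $\lp$ contribution, and push the characteristic form $\Phi(\Theta)$ of the renormalized connection to the boundary. Collecting the $\log \varepsilon$ coefficient and comparing it with $\int_{M} Q_{\Phi}^{\prime} \, \theta \wedge (d \theta)^{n}$ should yield the asserted identity with the universal constant $(- 1)^{n} (n + 1)^{2} / 2$. I expect this matching to be the main obstacle: one must control the interaction between the logarithmic term of the solution, the expansion of $\omega_{+}^{n - m}$, and the boundary behaviour of $\Phi(\Theta)$, and keep track of all combinatorial factors so that a single constant emerges independently of $\Phi$ and of the point of $M$.

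As an independent check on the invariance statement, one can also verify it directly from the transformation law of \cref{prop:transformation-law-of-Q-prime-curvature}. Using the elementary identity $\whxth \wedge (d \whxth)^{n} = e^{(n + 1) \Upsilon} \, \theta \wedge (d \theta)^{n}$, integration of that law reduces the claim to the vanishing of $\int_{M} \bigl( 2 P_{\Phi}^{\prime} \Upsilon + \calP_{\Phi}(\Upsilon, \Upsilon) \bigr) \, \theta \wedge (d \theta)^{n}$ for every CR pluriharmonic $\Upsilon$. This in turn follows from the formal self-adjointness of $P_{\Phi}$, the symmetry of $\calP_{\Phi}$, and the fact that $P_{\Phi}$ annihilates CR pluriharmonic functions, in particular constants; the same three properties underlie the transformation law of \cref{prop:transformation-law-of-P-prime-operator} and make this direct route consistent with the computation via $\scrI_{\Phi}$.
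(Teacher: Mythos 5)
Your main route is logically sound, but it distributes the work differently from the paper. For the identity $\scrI_{\Phi} = (-1)^{n}\tfrac{(n+1)^{2}}{2}\ovQ_{\Phi}^{\prime}$ your plan coincides in substance with the paper's: one expands $\bigl(\tfrac{d d^{c}\rho}{-\rho}\bigr)^{n-m+1} = \omega_{+}^{n-m+1} - (n-m+1)\, d\log(-\rho)\wedge d^{c}\log(-\rho)\wedge\omega_{+}^{n-m}$, feeds this into the defining equation \eqref{eq:def-of-Q-prime-curvature} of $Q_{\Phi}^{\prime}$, and takes $\lp$ of both sides via Stokes' theorem; the side with $\Box_{+}v^{\prime}$ yields $(-1)^{n+1}(n+1)^{2}\ovQ_{\Phi}^{\prime}$, and on the other side one must also verify $\lp\int_{\rho<-\varepsilon}\omega_{+}^{n-m+1}\wedge\Phi(\Theta)=0$ (a short Stokes computation your sketch does not isolate but would need). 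Where you genuinely differ is the invariance statement: you deduce it from the identity together with Marugame's theorem that $\scrI_{\Phi}$ is a CR invariant, which is legitimate since that result is independently established in \cite{Marugame2021}. The paper deliberately does \emph{not} do this: it proves invariance intrinsically by integrating the transformation law of \cref{prop:transformation-law-of-Q-prime-curvature} and invoking \cref{prop:divergence-of-P-prime-operator,prop:divergence-of-calP-operator}, and the introduction emphasizes that this yields a proof of CR invariance independent of Marugame's. Your route is shorter but imports Marugame's invariance; the paper's buys independence at the cost of the two vanishing propositions.

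Your closing ``independent check,'' however, is not correct as justified, and this reflects a real subtlety of the paper. You claim that $\int_{M}\bigl(2P_{\Phi}^{\prime}\Upsilon + \calP_{\Phi}(\Upsilon,\Upsilon)\bigr)\,\theta\wedge(d\theta)^{n}=0$ follows from self-adjointness of $P_{\Phi}$, symmetry of $\calP_{\Phi}$, and $P_{\Phi}|_{\scrP}=0$. That argument works only when $\deg\Phi=0$, where $\calP_{\Phi}(f_{1},f_{2}) = P_{\Phi}(f_{1}f_{2})$. For $m\geq 1$ the two operators differ: $\calP_{\Phi}$ is built from the product $\wtf_{1}\wtf_{2}$ of asymptotically harmonic extensions, which is not an asymptotically harmonic extension of $f_{1}f_{2}$, so $\int\calP_{\Phi}(\Upsilon,\Upsilon)$ cannot be converted into $\int\Upsilon^{2}\,P_{\Phi}(1)$ by self-adjointness; the paper instead proves \cref{prop:divergence-of-calP-operator} by a direct Stokes argument. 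Likewise $\int P_{\Phi}^{\prime}\Upsilon = 0$ cannot follow from properties of $P_{\Phi}$ alone, since $P_{\Phi}^{\prime}$ is a different operator; a formal derivation would require symmetry of $P_{\Phi}^{\prime}$ itself together with $P_{\Phi}^{\prime}(1)=0$, but the paper establishes that symmetry (\cref{thm:symmetric-of-P-prime}) only for $m<n$. Its actual proof of \cref{prop:divergence-of-P-prime-operator} is again a Stokes argument, which for $m=n\geq 2$ requires the nontrivial cohomological input $[\Phi(\Theta)]=0$ in $H^{2m}(M;\bbC)$. Since this check is redundant for your main argument, your proposal stands, but that paragraph should be deleted or replaced by the paper's direct arguments.
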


It is worth noting that our proof of the CR invariance of $\ovQ_{\Phi}^{\prime}$
is based on the transformation law of $Q_{\Phi}^{\prime}$,
which is independent of Marugame's one.
In addition,
we will discuss a relation between the $Q_{\Phi}$-prime curvature
and the $\calI_{\Phi}$-prime curvature in the case of $\deg \Phi = n$,
and compute $Q_{\Phi}^{\prime}$ on Sasakian $\eta$-Einstein manifolds when $\deg \Phi = n - 1$.

This paper is organized as follows.
In \cref{section:pseudo-Hermitian-geometry} (resp.\ \cref{section:strictly-pseudoconvex-domains}),
we recall basic facts on CR manifolds (resp.\ strictly pseudoconvex domains).
\cref{section:P_Phi-operator-and-calP_Phi-operator} provides the definitions of $P_{\Phi}$ and $\calP_{\Phi}$.
In \cref{section:P_Phi-prime-operator},
we introduce the $P_{\Phi}$-prime operator.
\cref{section:Q_Phi-prime-curvature} is devoted to the definition of the $Q_{\Phi}$-prime curvature
and the proof of \cref{thm:total-Q_Phi-prime-curvature}.
In \cref{section:deg-Phi=n-case},
we compute explicit formulae of $P_{\Phi}^{\prime}$ and $Q_{\Phi}^{\prime}$ for $\deg \Phi = n$
and compare these with $X^{\Phi}_{\alpha}$ and $\calI^{\prime}_{\Phi}$
introduced in \cites{Marugame2021,Case-Takeuchi2023}.
\cref{section:deg-Phi=n-1-case-on-SE} deals with the case of $\deg \Phi = n - 1$
on Sasakian $\eta$-Einstein manifolds.

\medskip

\noindent
\emph{Notation.}
We use Einstein's summation convention and assume that
\begin{itemize}
	\item lowercase Greek indices $\alpha, \beta, \gamma, \dots$ run from $1, \dots, n$;
	\item lowercase Latin indices $a, b, c, \dots$ run from $1, \dots , n, \infty$.
\end{itemize}

Suppose that a function $I(\varepsilon)$ admits an asymptotic expansion,
as $\varepsilon \to + 0$,
\begin{equation}
	I(\varepsilon)
	=\sum_{m = 1}^{k} a_{m} \varepsilon^{-m} + b \log \varepsilon + O(1).
\end{equation}
Then the \emph{logarithmic part} $\lp I(\varepsilon)$ of $I(\varepsilon)$ is the constant $b$.

\medskip

\section{Pseudo-Hermitian geometry}
\label{section:pseudo-Hermitian-geometry}

\subsection{CR structures}
\label{subsection:CR-structures}

Let $M$ be a smooth $(2n+1)$-dimensional manifold without boundary.
A \emph{CR structure} is a rank $n$ complex subbundle $T^{1, 0} M$
of the complexified tangent bundle $T M \otimes \mathbb{C}$ such that
\begin{equation}
	T^{1, 0} M \cap T^{0, 1} M = 0, \qquad
	[\Gamma(T^{1, 0} M), \Gamma(T^{1, 0} M)] \subset \Gamma(T^{1, 0} M),
\end{equation}
where $T^{0, 1} M$ is the complex conjugate of $T^{1, 0} M$ in $T M \otimes \mathbb{C}$.
Set $H M = \Re T^{1, 0}M$
and let $J \colon H M \to H M$ be the unique complex structure on $H M$ 
such that
\begin{equation}
	T^{1, 0} M = \Ker(J - \sqrt{- 1} \colon H M \otimes \mathbb{C} \to H M \otimes \mathbb{C}).
\end{equation}
A typical example of CR manifolds is a real hypersurface $M$ in an $(n + 1)$-dimensional complex manifold $X$;
this $M$ has the induced CR structure
\begin{equation}
	T^{1, 0} M
	\coloneqq T^{1, 0} X |_{M} \cap (T M \otimes \mathbb{C}).
\end{equation}
In particular,
the unit sphere
\begin{equation}
	S^{2 n + 1}
	\coloneqq \Set{z \in \bbC^{n + 1} | \abs{z}^{2} = 1}
\end{equation}
has the canonical CR structure $T^{1, 0} S^{2 n + 1}$.

Introduce an operator $\delb_{b} \colon C^{\infty}(M) \to \Gamma((T^{0, 1} M)^{*})$ by
\begin{equation}
	\delb_{b} f
	\coloneqq (d f)|_{T^{0, 1} M}.
\end{equation}
A smooth function $f$ is called a \emph{CR holomorphic function}
if $\delb_{b} f = 0$.
A \emph{CR pluriharmonic function} is a real-valued smooth function
that is locally the real part of a CR holomorphic function.
We denote by $\scrP$ the space of CR pluriharmonic functions.

A CR structure $T^{1, 0} M$ is said to be \emph{strictly pseudoconvex}
if there exists a nowhere-vanishing real one-form $\theta$ on $M$
such that
$\theta$ annihilates $T^{1, 0} M$ and
\begin{equation}
	- \sqrt{-1} d \theta (Z, \overline{Z}) > 0, \qquad
	0 \neq Z \in T^{1, 0} M.
\end{equation}
We call such a one-form a \emph{contact form}.
The triple $(M, T^{1, 0} M, \theta)$ is called a \emph{pseudo-Hermitian manifold}.
Denote by $T$ the \emph{Reeb vector field} with respect to $\theta$; 
that is,
the unique vector field satisfying
\begin{equation}
	\theta(T) = 1, \qquad T \contr d\theta = 0.
\end{equation}
Let $(Z_{\alpha})$ be a local frame of $T^{1, 0} M$,
and set $Z_{\ovxa} = \overline{Z_{\alpha}}$.
Then
$(T, Z_{\alpha}, Z_{\ovxa})$ gives a local frame of $T M \otimes \mathbb{C}$,
called an \emph{admissible frame}.
Its dual frame $(\theta, \theta^{\alpha}, \theta^{\ovxa})$
is called an \emph{admissible coframe}.
The two-form $d \theta$ is written as
\begin{equation}
	d \theta
	= \sqrt{-1} l_{\alpha \ovxb} \theta^{\alpha} \wedge \theta^{\ovxb},
\end{equation}
where $(l_{\alpha \ovxb})$ is a positive definite Hermitian matrix.
We use $l_{\alpha \ovxb}$ and its inverse $l^{\alpha \ovxb}$
to raise and lower indices of tensors.

\subsection{Tanaka-Webster connection and pseudo-Einstein condition}
\label{subsection:TW-connection-and-pE-condition}

A contact form $\theta$ induces a canonical connection $\nabla$ on $T M$,
called the \emph{Tanaka-Webster connection} with respect to $\theta$.
It is defined by
\begin{equation}
	\nabla T
	= 0,
	\quad
	\nabla Z_{\alpha}
	= \omega_{\alpha} {}^{\beta} Z_{\beta},
	\quad
	\nabla Z_{\ovxa}
	= \omega_{\ovxa} {}^{\ovxb} Z_{\ovxb}
	\quad
	\rbra*{\omega_{\ovxa} {}^{\ovxb}
	= \overline{\omega_{\alpha} {}^{\beta}}}
\end{equation}
with the following structure equations:
\begin{gather}
\label{eq:str-eq-of-TW-conn1}
	d \theta^{\beta}
	= \theta^{\alpha} \wedge \omega_{\alpha} {}^{\beta}
	+ A^{\beta} {}_{\ovxa} \theta \wedge \theta^{\ovxa}, \\
\label{eq:str-eq-of-TW-conn2}
	d l_{\alpha \ovxb}
	= \omega_{\alpha} {}^{\gamma} l_{\gamma \ovxb}
		+ l_{\alpha \ovxg} \omega_{\ovxb} {}^{\ovxg}.
\end{gather}
The tensor $A_{\alpha \beta} = \overline{A_{\ovxa \ovxb}}$
is shown to be symmetric and is called the \emph{Tanaka-Webster torsion}.
We denote the components of a successive covariant derivative of a tensor
by subscripts preceded by a comma,
for example, $K_{\alpha \ovxb , \gamma}$;
we omit the comma if the derivatives are applied to a function.
The \emph{sub-Laplacian} $\Delta_{b}$ is defined by
\begin{equation}
	\Delta_{b} u
	\coloneqq - u_{\alpha} {}^{\alpha} - u_{\ovxb} {}^{\ovxb}.
\end{equation}

The curvature form
$\Omega_{\alpha} {}^{\beta} \coloneqq d \omega_{\alpha} {}^{\beta}
- \omega_{\alpha} {}^{\gamma} \wedge \omega_{\gamma} {}^{\beta}$
of the Tanaka-Webster connection satisfies
\begin{equation}
\label{eq:curvature-form-of-TW-connection}
	\Omega_{\alpha} {}^{\beta}
	= R_{\alpha} {}^{\beta} {}_{\rho \ovxs} \theta^{\rho} \wedge \theta^{\ovxs}
		\qquad \text{modulo $\theta, \theta^{\rho} \wedge \theta^{\gamma},
		\theta^{\ovxg} \wedge \theta^{\ovxs}$}.
\end{equation}
We call the tensor $R_{\alpha} {}^{\beta} {}_{\rho \ovxs}$
the \emph{Tanaka-Webster curvature}.
This tensor has the symmetry 
\begin{equation}
	R_{\alpha \ovxb \rho \ovxs}
	= R_{\rho \ovxb \alpha \ovxs}
	= R_{\alpha \ovxs \rho \ovxb}.
\end{equation}
Contraction of indices gives the \emph{Tanaka-Webster Ricci curvature}
$\Ric_{\rho \ovxs} = R_{\alpha} {}^{\alpha} {}_{\rho \ovxs}$
and the \emph{Tanaka-Webster scalar curvature}
$\Scal = \Ric_{\rho} {}^{\rho}$.
When $n > 1$,
the \emph{Chern tensor} $S_{\alpha \ovxb \rho \ovxs}$ is the completely trace-free part
of $R_{\alpha \ovxb \rho \ovxs}$;
this tensor is a CR analogue of the Weyl tensor in conformal geometry.
It is known that the Chern tensor vanishes identically
if and only if $(M, T^{1, 0} M)$ is spherical;
that is,
locally CR equivalent to $(S^{2 n + 1}, T^{1, 0} S^{2 n + 1})$~\cite{Chern-Moser1974}.

A contact form $\theta$ is said to be \emph{pseudo-Einstein}
if the following two equalities hold:
\begin{equation}
\label{eq:pseudo-Einstein-condition}
	\Ric_{\alpha \ovxb}
	= \frac{1}{n} \Scal \cdot l_{\alpha \ovxb},
	\qquad
	\Scal_{\alpha}
	= \sqrt{- 1} n A_{\alpha \beta ,} {}^{\beta}.
\end{equation}
From Bianchi identities for the Tanaka-Webster connection,
we obtain
\begin{equation}
	\rbra*{\Ric_{\alpha \ovxb} - \frac{1}{n} \Scal \cdot l_{\alpha \ovxb}}_{,} {}^{\ovxb}
	= \frac{n - 1}{n} \rbra*{\Scal_{\alpha} - \sqrt{- 1} n A_{\alpha \beta ,} {}^{\beta}};
\end{equation}
see~\cite{Hirachi2014}*{Lemma 5.7(iii)} for example.
Hence the latter equality of \cref{eq:pseudo-Einstein-condition} follows from the former one if $n > 1$.
On the other hand,
the former equality of \cref{eq:pseudo-Einstein-condition} automatically holds if $n = 1$,
and the latter one is a non-trivial condition.
It is known that $\whxth = e^{\Upsilon} \theta$ is another pseudo-Einstein contact form
if and only if $\Upsilon$ is CR pluriharmonic;
see~\cite{Lee1988}*{Proposition 5.1} for example.

\subsection{Sasakian manifolds}
\label{subsection:Sasakian-manifolds}

Sasakian manifolds constitute an important class of pseudo-Hermitian manifolds.
See~\cite{Boyer-Galicki2008} for a comprehensive introduction to Sasakian manifolds.

A \emph{Sasakian manifold} is a pseudo-Hermitian manifold $(S, T^{1, 0}S, \eta)$
with vanishing Tanaka-Webster torsion.
Remark that we use $\eta$ instead of $\theta$
since this is the common notation in Sasakian geometry.
This condition is equivalent to that the Reeb vector field $T$ with respect to $\eta$
preserves the CR structure $T^{1, 0} S$.
An almost complex structure $I$ on the cone $C(S) = \bbR_{+} \times S$ of $S$
is defined by
\begin{equation}
	I(a (r \pdvf{}{r}) + b T + V) = - b (r \pdvf{}{r}) + a T + J V,
\end{equation}
where $r$ is the coordinate of $\bbR_{+}$,
$a, b \in \mathbb{R}$, and $V \in H S$.
The bundle $T^{1, 0} C(S)$ of $(1, 0)$-vectors with respect to $I$ is given by
\begin{equation}
	T^{1, 0} C(S)
	= \mathbb{C}(r \pdvf{}{r} - \sqrt{- 1} T) \oplus T^{1, 0} S.
\end{equation}
The vanishing of the Tanaka-Webster torsion implies that
$I$ is integrable;
that is,
$(C(S), I)$ is a complex manifold.
Moreover,
the one-form $\eta$ is equal to $d^{c} \log r^{2}$.
In what follows,
we identify $S$ with the level set $\{1\} \times S \subset C(S)$.

A $(2 n + 1)$-dimensional Sasakian manifold $(S, T^{1, 0} S, \eta)$
is said to be \emph{Sasakian $\eta$-Einstein with Einstein constant $(n + 1) \lambda$}
if the Tanaka-Webster Ricci curvature satisfies
\begin{equation}
	\Ric_{\alpha \ovxb}
	= (n + 1) \lambda l_{\alpha \ovxb}.
\end{equation}
Note that $\eta$ is a pseudo-Einstein contact form on $(S, T^{1, 0} S)$.

\section{Asymptotic analysis on strictly pseudoconvex domains}
\label{section:strictly-pseudoconvex-domains}

Let $\Omega$ be a relatively compact domain in an $(n+1)$-dimensional complex manifold $X$
with smooth boundary $M = \bdry \Omega$.
There exists a smooth function $\rho$ on $X$ such that
\begin{equation}
	\Omega = \rho^{-1}((- \infty, 0)), \quad
	M = \rho^{-1}(0), \quad
	d \rho |_{M} \neq 0;
\end{equation}
such a $\rho$ is called a \emph{defining function} of $\Omega$.
A domain $\Omega$ is said to be \emph{strictly pseudoconvex}
if we can take a defining function $\rho$ of $\Omega$ that is strictly plurisubharmonic near $M$.
The boundary $M$ is a closed strictly pseudoconvex real hypersurface
and $d^{c} \rho |_{T M}$ is a contact form on $M$.
Conversely,
it is known that
any closed connected strictly pseudoconvex CR manifold of dimension at least five can be realized as
the boundary of a strictly pseudoconvex domain
in a complex projective manifold~\cites{Boutet_de_Monvel1975,Harvey-Lawson1975,Lempert1995}.

\subsection{Graham-Lee connection}
\label{subsection:Graham-Lee-connection}

Let $\rho$ be a defining function of a strictly pseudoconvex domain $\Omega$
in an $(n + 1)$-dimensional complex manifold $X$.
By strict pseudoconvexity,
there exists the unique $(1, 0)$-vector field $\wtZ_{\infty}$ near the boundary such that
\begin{equation}
	\wtZ_{\infty} \rho = 1,
	\qquad
	\wtZ_{\infty} \contr \del \delb \rho
	= \kappa \delb \rho
\end{equation}
for a smooth function $\kappa$,
which is called the \emph{transverse curvature}.
Set
\begin{equation}
	N
	\coloneqq \Re \wtZ_{\infty},
	\qquad
	\wtT
	\coloneqq - 2 \Im \wtZ_{\infty}.
\end{equation}
Then $N$ and $\wtT$ satisfy
\begin{equation}
	N \rho
	= 1,
	\qquad
	\vartheta(N)
	= 0,
	\qquad
	\wtT \rho
	= 0,
	\qquad
	\vartheta(\wtT)
	= 1,
	\qquad
	(\wtT \contr d \vartheta)|_{\Ker d \rho}
	= 0,
\end{equation}
where $\vartheta \coloneqq d^{c} \rho$.
In particular,
$\wtT$ coincides with the Reeb vector field with respect to $\vartheta$ on each level set of $\rho$.
Take a local frame $(\wtZ_{\alpha})$ of $\Ker \del \rho$,
and let $(\wtxth^{\alpha}, \wtxth^{\infty} = \del \rho)$ be
the dual frame of $(\wtZ_{\alpha}, \wtZ_{\infty})$.
We set $\wtZ_{\ovb} \coloneqq \overline{\wtZ_{b}}$
and $\wtxth^{\ovb} \coloneqq \overline{\wtxth^{b}}$.
Then
\begin{equation}
\label{eq:derivative-of-vartheta}
	d \vartheta
	= \sqrt{- 1} \wtl_{\alpha \ovxb} \wtxth^{\alpha} \wedge \wtxth^{\ovxb}
		+ \kappa d \rho \wedge \vartheta,
\end{equation}
where $\wtl_{\alpha \ovxb} \coloneqq \del \delb \rho (\wtZ_{\alpha}, \wtZ_{\ovxb})$
is the Levi form on each level set of $\rho$ for the contact form
given by the restriction of $\vartheta$.
To simplify notation,
we set
\begin{equation}
	\wtxm
	\coloneqq \sqrt{- 1} \wtl_{\alpha \ovxb} \wtxth^{\alpha} \wedge \wtxth^{\ovxb}.
\end{equation}
We use $\wtl_{\alpha \ovxb}$ and its inverse $\wtl^{\alpha \ovxb}$
to raise and lower indices of tensors.
The \emph{Graham-Lee connection} $\wtna$
is the unique connection on $T X$ defined by
\begin{equation}
	\wtna \wtZ_{\alpha}
	= \wtxo_{\alpha} {}^{\beta} \wtZ_{\beta},
	\quad
	\wtna \wtZ_{\ovxa}
	= \wtxo_{\ovxa} {}^{\ovxb} \wtZ_{\ovxb}
	\quad
	\rbra*{\wtxo_{\ovxa} {}^{\ovxb}
	= \overline{\wtxo_{\alpha} {}^{\beta}}},
	\quad
	\wtna \wtZ_{\infty}
	= \widetilde{\nabla} \wtZ_{\ovin}
	= 0,
\end{equation}
with the following structure equations:
\begin{gather}
\label{eq:derivative-of-admissible-frame}
	d \wtxth^{\beta}
	= \wtxth^{\alpha} \wedge \wtxo_{\alpha} {}^{\beta}
		- \sqrt{- 1} \wtA^{\beta} {}_{\ovxs} \del \rho \wedge \wtxth^{\ovxs}
		+ \sqrt{- 1} \kappa^{\beta} d \rho \wedge \vartheta
		+ \frac{1}{2} \kappa d \rho \wedge \wtxth^{\beta}, \\
	d \wtl_{\alpha \ovxb}
	= \wtxo_{\alpha} {}^{\gamma} \wtl_{\gamma \ovxb}
		+ \wtl_{\alpha \ovxg} \wtxo_{\ovxb} {}^{\ovxg};
\end{gather}
see~\cite{Graham-Lee1988}*{Proposition 1.1} for the proof of the existence and uniqueness.
Note that the restriction of $\wtna$ to each level set of $\rho$
coincides with the Tanaka-Webster connection on it.
We also note that
\begin{equation}
\label{eq:parallel-differential-forms}
	\wtna d \rho
	= 0,
	\qquad
	\wtna \vartheta
	= 0,
	\qquad
	\wtna \wtxm
	= 0.
\end{equation}
The following lemma will be used later.

\begin{lemma}
	For any $u \in C^{\infty}(\ovxco)$,
	\begin{equation}
	\label{eq:dd^c(NT)}
		d d^{c} u (N, \wtT)
		= \rbra*{N^{2} + \frac{1}{4} \wtT^{2} + \kappa N
			+ \frac{1}{2} \kappa^{\alpha} \wtZ_{\alpha}
			+ \frac{1}{2} \kappa^{\ovxb} \wtZ_{\ovxb}} u.
	\end{equation}
\end{lemma}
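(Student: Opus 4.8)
The plan is to compute $d d^{c} u(N, \wtT)$ from Cartan's formula and to concentrate all the curvature terms of the right-hand side into the single Lie bracket $[N, \wtT]$. First I would pass from the real frame $(N, \wtT)$ to the complex one: from $N = \Re \wtZ_{\infty}$ and $\wtT = - 2 \Im \wtZ_{\infty}$ one gets $N = \tfrac{1}{2}(\wtZ_{\infty} + \wtZ_{\ovin})$ and $\wtT = \sqrt{-1}(\wtZ_{\infty} - \wtZ_{\ovin})$, equivalently $\wtZ_{\ovin} - \wtZ_{\infty} = \sqrt{-1}\, \wtT$. Expanding $d^{c} u = (\sqrt{-1}/2)(\delb u - \del u)$ in the coframe and using $\del \rho = \wtxth^{\infty}$, $\delb \rho = \wtxth^{\ovin}$, a short computation gives the two contractions
\[
	d^{c} u(N) = - \tfrac{1}{4} \wtT u, \qquad d^{c} u(\wtT) = N u.
\]

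Next I would apply $d \omega(X, Y) = X(\omega(Y)) - Y(\omega(X)) - \omega([X, Y])$ with $\omega = d^{c} u$, $X = N$, $Y = \wtT$. By the previous step the first two terms produce $N(N u) + \tfrac{1}{4}\wtT(\wtT u) = (N^{2} + \tfrac{1}{4}\wtT^{2}) u$, so that everything reduces to evaluating $- d^{c} u([N, \wtT])$, and the whole content of the lemma is packaged in the bracket.

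The main step is therefore computing $[N, \wtT]$. Bilinearity reduces it to $[N, \wtT] = - \sqrt{-1}\, [\wtZ_{\infty}, \wtZ_{\ovin}]$. To find the components of $[\wtZ_{\infty}, \wtZ_{\ovin}]$ I would pair it against the coframe: since $\wtxth^{a}, \wtxth^{\ovb}$ take constant values on the frame vectors, Cartan's formula collapses to $\wtxth^{a}([\wtZ_{\infty}, \wtZ_{\ovin}]) = - d \wtxth^{a}(\wtZ_{\infty}, \wtZ_{\ovin})$ and likewise for the barred indices. I would then feed in \cref{eq:derivative-of-admissible-frame} for $d \wtxth^{\beta}$ (and its conjugate for $d \wtxth^{\ovxb}$), together with $d \wtxth^{\infty} = d(\del \rho) = \sqrt{-1}\, d \vartheta$, $d \wtxth^{\ovin} = - \sqrt{-1}\, d \vartheta$, and \cref{eq:derivative-of-vartheta} for $d \vartheta$. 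Evaluating each two-form on $(\wtZ_{\infty}, \wtZ_{\ovin})$ annihilates every term except those containing $d \rho \wedge \vartheta$, for which $(d \rho \wedge \vartheta)(\wtZ_{\infty}, \wtZ_{\ovin}) = \sqrt{-1}$; this is exactly where the transverse curvature $\kappa$ and the components $\kappa^{\beta}, \kappa^{\ovxb}$ enter. The outcome is
\[
	[\wtZ_{\infty}, \wtZ_{\ovin}] = \kappa^{\beta} \wtZ_{\beta} + \kappa \wtZ_{\infty} - \kappa^{\ovxb} \wtZ_{\ovxb} - \kappa \wtZ_{\ovin},
\]
which I would cross-check using $\overline{[\wtZ_{\infty}, \wtZ_{\ovin}]} = - [\wtZ_{\infty}, \wtZ_{\ovin}]$.

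Finally I would apply $d^{c} u$ to this bracket, using $d^{c} u(\wtZ_{b}) = - (\sqrt{-1}/2) \wtZ_{b} u$ on holomorphic indices, $d^{c} u(\wtZ_{\ovb}) = (\sqrt{-1}/2) \wtZ_{\ovb} u$ on antiholomorphic ones, and $\wtZ_{\infty} u + \wtZ_{\ovin} u = 2 N u$, which yields $- d^{c} u([N, \wtT]) = \kappa N u + \tfrac{1}{2} \kappa^{\beta} \wtZ_{\beta} u + \tfrac{1}{2} \kappa^{\ovxb} \wtZ_{\ovxb} u$. Adding this to $(N^{2} + \tfrac{1}{4}\wtT^{2}) u$ gives the stated identity. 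I expect the bracket computation of the third step to be the only delicate point, since it is where the non-integrability of the $N$--$\wtT$ splitting and the transverse-curvature contributions are produced; the remaining steps are bookkeeping with factors of $\sqrt{-1}$.
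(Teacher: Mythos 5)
Your proof is correct: I checked the contractions $d^{c}u(N) = -\tfrac{1}{4}\wtT u$ and $d^{c}u(\wtT) = N u$, the bracket $[\wtZ_{\infty}, \wtZ_{\ovin}] = \kappa^{\beta}\wtZ_{\beta} + \kappa\wtZ_{\infty} - \kappa^{\ovxb}\wtZ_{\ovxb} - \kappa\wtZ_{\ovin}$ (consistent with $d\rho([\wtZ_{\infty},\wtZ_{\ovin}])=0$ and $\vartheta([\wtZ_{\infty},\wtZ_{\ovin}])=-\sqrt{-1}\,\kappa$), and the final assembly, and all signs and factors of $\sqrt{-1}$ come out right. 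The paper gives no computation of its own—it simply cites the proof of Graham--Lee's Proposition 2.1—and your argument (Cartan's formula together with the structure equations for $d\wtxth^{a}$ and $d\vartheta$) is essentially the same computation carried out in that reference, now made self-contained.
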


\begin{proof}
	See the proof of~\cite{Graham-Lee1988}*{Proposition 2.1}.
\end{proof}

The curvature form
$\wtxco_{\alpha} {}^{\beta} \coloneqq
d \wtxo_{\alpha} {}^{\beta} - \wtxo_{\alpha} {}^{\gamma} \wedge \wtxo_{\gamma} {}^{\beta}$
is given by
\begin{equation}
\label{eq:TW-curvature-form}
	\begin{split}
			\wtxco_{\alpha} {}^{\beta}
		&= \wtR_{\alpha} {}^{\beta} {}_{\rho \ovxs} \wtxth^{\rho} \wedge \wtxth^{\ovxs}
			+ \sqrt{- 1} \wtA_{\alpha \gamma ,} {}^{\beta} \wtxth^{\gamma} \wedge \delb \rho
			+ \sqrt{- 1} \wtA^{\beta} {}_{\ovxg , \alpha} \wtxth^{\ovxg} \wedge \del \rho \\
		&\quad - \sqrt{- 1} \wtA_{\alpha \gamma} \wtxth^{\gamma} \wedge \wtxth^{\beta}
			+ \sqrt{- 1} \wtl_{\alpha \ovxg} \wtA^{\beta} {}_{\ovxd} \wtxth^{\ovxg} \wedge \wtxth^{\ovxd} \\
		&\quad + d \rho \wedge \rbra*{\kappa_{\alpha} \wtxth^{\beta}
			- \wtl_{\alpha \ovxg} \kappa^{\beta} \wtxth^{\ovxg}
			+ \frac{1}{2} \delta_{\alpha} {}^{\beta} \kappa_{\gamma} \wtxth^{\gamma}
			- \frac{1}{2} \delta_{\alpha} {}^{\beta} \kappa_{\ovxg} \wtxth^{\ovxg}} \\
		&\quad + \frac{\sqrt{- 1}}{2} (\kappa_{\alpha} {}^{\beta} + \kappa^{\beta} {}_{\alpha}
			+ 2 \wtA_{\alpha \gamma} \wtA^{\gamma \beta}) d \rho \wedge \vartheta;
	\end{split}
\end{equation}
see~\cite{Graham-Lee1988}*{Proposition 1.2}.

\subsection{Fefferman defining functions}
\label{subsection:Fefferman-defining-functions}

Let $\Omega$ be a strictly pseudoconvex domain
in an $(n+1)$-dimensional complex manifold $X$ with $\bdry \Omega = M$.
Assume that $M$ has a pseudo-Einstein contact form $\theta$.
Then there exists a defining function $\rho$ of $\Omega$
such that $d^{c} \rho |_{T M} = \theta$ and
\begin{equation}
	\omega_{+}
	\coloneqq - d d^{c} \log (- \rho)
	= \frac{1 - \kappa \rho}{(- \rho)^{2}} d \rho \wedge \vartheta
		+ \frac{1}{- \rho} \wtxm
\end{equation}
defines a \Kahler metric near the boundary
and satisfies
\begin{equation}
	\Ric(\omega_{+}) + (n + 2) \omega_{+}
	= d d^{c} O(\rho^{n+2});
\end{equation}
see~\cite{Hirachi2014}*{Section 2.2} for example.
Note that this equation is equivalent to
an asymptotic complex Monge-\Ampere equation for the potential.
We call such a $\rho$ a \emph{Fefferman defining function associated with $\theta$}.
Note that $\rho$ is determined uniquely by $\theta$ modulo $O(\rho^{n + 3})$~\cite{Fefferman1976}*{Section II}.
Moreover,
normal derivatives of the curvature and torsion of the Graham-Lee connection
are determined by the boundary data.

\begin{proposition}[\cite{Marugame2021}*{Proposition 3.5}]
\label{prop:normal-derivative-of-GL}
	Let $\wtna$ be the Graham-Lee connection for a Fefferman defining function
	associated with a pseudo-Einstein contact form $\theta$.
	Then the boundary values of
	\begin{equation}
		\wtna_{N}^{p} \wtR_{\alpha \ovxb \rho \ovxs},
		\qquad
		\wtna_{N}^{p} \wtA_{\alpha \beta},
		\qquad
		N^{p - 1} \kappa
		\qquad
		(p \leq n + 1)
	\end{equation}
	are expressed in terms of $R_{\alpha \ovxb \rho \ovxs}$, $A_{\alpha \beta}$,
	and their covariant derivatives with respect to $\theta$.
\end{proposition}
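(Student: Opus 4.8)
The plan is to induct on $p$, exploiting the approximate complex Monge-\Ampere equation $\Ric(\omega_{+}) + (n + 2) \omega_{+} = d d^{c} O(\rho^{n + 2})$ that characterizes the Fefferman defining function modulo $O(\rho^{n + 3})$. The starting point is that every Graham-Lee quantity — the Levi form $\wtl_{\alpha \ovxb}$, the connection forms $\wtxo_{\alpha} {}^{\beta}$, the torsion $\wtA_{\alpha \beta}$, the curvature $\wtR_{\alpha \ovxb \rho \ovxs}$, and the transverse curvature $\kappa$ — is built from $\rho$ and its jets, and that on each level set of $\rho$ the restriction of $\wtna$ is exactly the Tanaka-Webster connection of the induced contact form. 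Hence the case $p = 0$ is immediate: on $M$ one has $\wtR|_{M} = R$ and $\wtA|_{M} = A$, while the leading part of the Monge-\Ampere equation pins down $\kappa|_{M}$ in terms of $\Scal$. It remains to show that this determination propagates to the normal derivatives up to the stated order.

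First I would derive normal transport equations for the three families of quantities. Extracting the components proportional to $d \rho$ in the structure equation \cref{eq:derivative-of-admissible-frame} and in the Graham-Lee curvature form \cref{eq:TW-curvature-form}, and feeding these into the second Bianchi identity $d \wtxco_{\alpha} {}^{\beta} = \wtxo_{\alpha} {}^{\gamma} \wedge \wtxco_{\gamma} {}^{\beta} - \wtxco_{\alpha} {}^{\gamma} \wedge \wtxo_{\gamma} {}^{\beta}$, should produce identities expressing $\wtna_{N} \wtA_{\alpha \beta}$ and $\wtna_{N} \wtR_{\alpha \ovxb \rho \ovxs}$ as universal polynomials in $\kappa$, $\wtA$, $\wtR$, and their tangential $\wtna$-derivatives, together with $N \kappa$ and its tangential derivatives. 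The key structural point I would isolate is that the only genuinely new scalar appearing at each order is the top normal derivative of $\kappa$; everything else is either tangential or of strictly lower normal order.

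The missing scalar is then supplied by the Monge-\Ampere equation itself. Writing $\Ric(\omega_{+}) + (n + 2) \omega_{+}$ in terms of the Graham-Lee data, applying $\wtna_{N}^{p}$, and restricting to $M$, the right-hand side $d d^{c} O(\rho^{n + 2})$ contributes nothing for $p \leq n + 1$ — this is precisely the range in which the Fefferman defining function is a genuine solution — and the resulting relation expresses $N^{p} \kappa$ on $M$ through lower-order data. Combining this with the transport equations of the previous step and invoking the induction hypothesis, the $p$-th normal derivatives of $\wtR$ and $\wtA$, together with $N^{p - 1} \kappa$, are all expressed in terms of $R_{\alpha \ovxb \rho \ovxs}$, $A_{\alpha \beta}$, and their $\theta$-covariant derivatives, which closes the induction. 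The main obstacle lies in the second step: carrying out the extraction of the $d \rho$-components from the Bianchi identity and the curvature form, managing the noncommutativity of $\wtna_{N}$ with the tangential operators $\wtna_{\alpha}$ and $\wtna_{\ovxb}$, and confirming that at each order the only undetermined ingredient is the scalar $N \kappa$. Keeping the bound $p \leq n + 1$ sharp, in exact correspondence with the order to which the Monge-\Ampere equation is solved, is the other delicate point.
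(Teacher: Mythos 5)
First, note that the paper you are being checked against contains no proof of this statement at all: it is imported, with citation, as \cite{Marugame2021}*{Proposition 3.5}, so the only comparison available is with the argument of that reference. Your overall strategy is the natural one and, at the level of structure, the standard route: induct on $p$; get the base case from the facts that $\wtna$ restricts on $M$ to the Tanaka-Webster connection of $\theta = d^{c}\rho|_{T M}$ (so $\wtR|_{M} = R$, $\wtA|_{M} = A$) and that the Monge-\Ampere equation normalizes $\kappa|_{M}$; observe from \cref{eq:derivative-of-admissible-frame,eq:TW-curvature-form} that every component of the Graham-Lee curvature involving $d\rho$ is expressed through \emph{tangential} derivatives of $\kappa$ and $\wtA$, so that Bianchi-type identities give normal transport equations for $\wtR$ and $\wtA$; and let the Monge-\Ampere equation supply the normal jets of $\kappa$, which are indeed the only data left free for a general defining function.

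The genuine gap is in the order counting, which is exactly where the sharp bound $p \leq n + 1$ lives. You claim that applying $\wtna_{N}^{p}$ to $\Ric(\omega_{+}) + (n + 2)\omega_{+} = d d^{c} O(\rho^{n + 2})$ and restricting to $M$ picks up nothing from the right-hand side for all $p \leq n + 1$. This is false as stated: for smooth $\eta$,
\begin{equation}
	d d^{c} (\eta \rho^{n + 2})
	= (n + 2) (n + 1) \eta \rho^{n} \, d \rho \wedge d^{c} \rho + O(\rho^{n + 1}),
\end{equation}
so the error is only $O(\rho^{n})$ as a two-form, and its $d \rho \wedge \vartheta$-component contributes already at $p = n$, well inside your claimed range. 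The argument can be repaired, but only with finer bookkeeping than ``the equation holds to order $n + 2$, hence differentiate it $n + 1$ times'': one must either (i) track the error component by component in the Graham-Lee coframe --- the purely tangential and mixed components are $O(\rho^{n + 1})$, while the $d \rho \wedge \vartheta$-component, though only $O(\rho^{n})$, contains $N \kappa$ linearly (compare \cref{eq:dd^c(NT)} applied to the potential), so that differentiating that component $q \leq n - 1$ times already determines $N^{q + 1} \kappa|_{M}$, i.e.\ $N^{p - 1} \kappa|_{M}$ in exactly the stated range, the last orders of $\wtR$ and $\wtA$ then coming from the transport identities alone, which hold for \emph{any} defining function and consume no Monge-\Ampere information --- or (ii) abandon the $d d^{c}$-form and work with the scalar Monge-\Ampere equation $J[\rho] = 1 + O(\rho^{n + 2})$, whose error genuinely survives $n + 1$ normal derivatives. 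As written, the asserted ``exact correspondence'' between the range of $p$ and the order of the error term is not the actual mechanism, and the induction you describe does not close at the claimed order.
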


\subsection{Harmonic and pluriharmonic extension}
\label{subsection:harmonic-and-pluriharmonic-extension}

Let $\rho$ be a Fefferman defining function associated with a pseudo-Einstein contact form $\theta$.
Denote by $\Box_{+}$ the $\delb$-Laplacian with respect to the \Kahler form
$\omega_{+} = - d d^{c} \log (- \rho)$;
in our convention,
\begin{equation}
	(\Box_{+} u) \omega_{+}^{n + 1}
	= - (n + 1) d d^{c} u \wedge \omega_{+}^{n}.
\end{equation}
This $\Box_{+}$ can be written in terms of the Graham-Lee connection.

\begin{proposition}[\cite{Graham-Lee1988}*{Proposition 2.1}]
	For any $u \in C^{\infty}(\Omega)$,
	\begin{equation}
	\label{eq:formula-of-Box_+}
		\Box_{+} u
		= - \frac{\rho^{2}}{1 - \kappa \rho}
			\rbra*{N^{2} u + \frac{1}{4} \wtT^{2} u + \kappa N
			+ \frac{1}{2} (\kappa^{\alpha} u_{\alpha}
			+ \kappa^{\ovxb} u_{\ovxb})}
		- \frac{1}{2} \rho \wtxcd_{b} u + n \rho N u,
	\end{equation}
	where
	\begin{equation}
		\wtxcd_{b} u
		\coloneqq - u_{\alpha} {}^{\alpha} - u_{\ovxb} {}^{\ovxb}.
	\end{equation}
\end{proposition}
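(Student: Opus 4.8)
The plan is to rewrite $\Box_{+}$ as a weighted trace of the complex Hessian of $u$ and then split that trace, using the Graham-Lee coframe, into a transverse block spanned by $\wtZ_{\infty}, \wtZ_{\ovin}$ and a horizontal block spanned by $\wtZ_{\alpha}, \wtZ_{\ovxb}$. First I would note that for any real $(1,1)$-form $\beta$ one has $\beta \wedge \omega_{+}^{n} = \frac{1}{n + 1}(\operatorname{tr}_{\omega_{+}} \beta)\, \omega_{+}^{n + 1}$, so the defining relation reduces to $\Box_{+} u = - \operatorname{tr}_{\omega_{+}}(d d^{c} u)$. Since $d \rho = \wtxth^{\infty} + \wtxth^{\ovin}$ and $\vartheta = \tfrac{\sqrt{-1}}{2}(\wtxth^{\ovin} - \wtxth^{\infty})$, we get $d \rho \wedge \vartheta = \sqrt{-1}\, \wtxth^{\infty} \wedge \wtxth^{\ovin}$ and hence
\begin{equation}
	\omega_{+} = \sqrt{-1}\, \frac{1 - \kappa \rho}{\rho^{2}}\, \wtxth^{\infty} \wedge \wtxth^{\ovin} - \sqrt{-1}\, \frac{1}{\rho}\, \wtl_{\alpha \ovxb}\, \wtxth^{\alpha} \wedge \wtxth^{\ovxb}.
\end{equation}
Writing $\omega_{+} = \sqrt{-1}\, g_{a \ovb}\, \wtxth^{a} \wedge \wtxth^{\ovb}$, this form is block diagonal, so its inverse has nonzero components $g^{\infty \ovin} = \rho^{2}/(1 - \kappa \rho)$ and $g^{\alpha \ovxb} = - \rho\, \wtl^{\alpha \ovxb}$; expanding the trace in this coframe (the factor $\sqrt{-1}$ arising from $d d^{c} u = \sqrt{-1}\, \del \delb u$) gives
\begin{equation}
	\Box_{+} u = \sqrt{-1}\, g^{\infty \ovin}\, (d d^{c} u)(\wtZ_{\infty}, \wtZ_{\ovin}) + \sqrt{-1}\, g^{\alpha \ovxb}\, (d d^{c} u)(\wtZ_{\alpha}, \wtZ_{\ovxb}).
\end{equation}

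For the transverse block I would use $\wtZ_{\infty} = N - \tfrac{\sqrt{-1}}{2} \wtT$ and $\wtZ_{\ovin} = N + \tfrac{\sqrt{-1}}{2} \wtT$, which follow from $N = \Re \wtZ_{\infty}$ and $\wtT = - 2 \Im \wtZ_{\infty}$. Because $d d^{c} u$ is alternating, this yields $(d d^{c} u)(\wtZ_{\infty}, \wtZ_{\ovin}) = \sqrt{-1}\, (d d^{c} u)(N, \wtT)$, so the transverse block equals $- \frac{\rho^{2}}{1 - \kappa \rho}\, (d d^{c} u)(N, \wtT)$; substituting \cref{eq:dd^c(NT)} turns this into exactly the first term of \cref{eq:formula-of-Box_+}.

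For the horizontal block I would expand by the Cartan formula $(d d^{c} u)(\wtZ_{\alpha}, \wtZ_{\ovxb}) = \wtZ_{\alpha}\rbra*{(d^{c} u)(\wtZ_{\ovxb})} - \wtZ_{\ovxb}\rbra*{(d^{c} u)(\wtZ_{\alpha})} - (d^{c} u)([\wtZ_{\alpha}, \wtZ_{\ovxb}])$. With $d^{c} u = \tfrac{\sqrt{-1}}{2}(\delb u - \del u)$ the first two terms give $\tfrac{\sqrt{-1}}{2}(\wtZ_{\alpha} \wtZ_{\ovxb} + \wtZ_{\ovxb} \wtZ_{\alpha}) u$. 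The commutator has vanishing $N$-component since $d \rho([\wtZ_{\alpha}, \wtZ_{\ovxb}]) = 0$, while its $\wtT$-component is $- \sqrt{-1}\, \wtl_{\alpha \ovxb}\, \wtT$, as read off from $d \vartheta(\wtZ_{\alpha}, \wtZ_{\ovxb}) = \sqrt{-1}\, \wtl_{\alpha \ovxb}$; since $(d^{c} u)(\wtT) = N u$, this produces the contribution $\sqrt{-1}\, \wtl_{\alpha \ovxb}\, N u$, which upon tracing against $- \sqrt{-1} \rho\, \wtl^{\alpha \ovxb}$ and using $\wtl^{\alpha \ovxb} \wtl_{\alpha \ovxb} = n$ gives the summand $n \rho N u$. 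The frame second derivatives together with the horizontal part of the commutator should reorganize into the covariant Hessian $\tfrac{\sqrt{-1}}{2}(u_{\alpha \ovxb} + u_{\ovxb \alpha})$ of the Graham-Lee connection, whose trace against $- \sqrt{-1} \rho\, \wtl^{\alpha \ovxb}$ is $- \tfrac{1}{2} \rho\, \wtxcd_{b} u$. Adding the two blocks gives \cref{eq:formula-of-Box_+}.

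The main obstacle is the last reorganization: one must verify that the horizontal component of $[\wtZ_{\alpha}, \wtZ_{\ovxb}]$ and the Christoffel terms implicit in replacing $\wtZ_{\alpha} \wtZ_{\ovxb} u$ by the covariant $u_{\alpha \ovxb}$ combine exactly into $\wtxcd_{b} u$, with no leftover torsion or first-order transverse ($\wtT u$) terms. This is where the full structure equation \cref{eq:derivative-of-admissible-frame} is used: every term there carrying a $d \rho$ factor --- the $\wtA^{\beta}{}_{\ovxs}$-, $\kappa^{\beta}$-, and $\tfrac{1}{2} \kappa$-terms --- is annihilated when the resulting two-forms are evaluated on the purely horizontal pair $(\wtZ_{\alpha}, \wtZ_{\ovxb})$, which is precisely what keeps the horizontal block free of transverse-curvature and torsion contributions and leaves only the sub-Laplacian.
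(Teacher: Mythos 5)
Your proposal is correct. Note that the paper itself offers no proof of this proposition: it is quoted verbatim from Graham--Lee, with only the transverse-block identity \cref{eq:dd^c(NT)} stated separately (also with proof deferred to Graham--Lee). So your argument is a self-contained reconstruction of the cited computation, and it holds up: the reduction $\Box_{+} u = -\operatorname{tr}_{\omega_{+}}(d d^{c} u)$, the block-diagonal form of $\omega_{+}$ in the coframe $(\wtxth^{\alpha}, \wtxth^{\infty})$ with inverse components $g^{\infty \ovin} = \rho^{2}/(1 - \kappa\rho)$ and $g^{\alpha \ovxb} = -\rho\, \wtl^{\alpha \ovxb}$, the identity $(d d^{c} u)(\wtZ_{\infty}, \wtZ_{\ovin}) = \sqrt{-1}\,(d d^{c} u)(N, \wtT)$, and the values $(d^{c} u)(\wtT) = N u$, $\vartheta([\wtZ_{\alpha}, \wtZ_{\ovxb}]) = -\sqrt{-1}\, \wtl_{\alpha \ovxb}$ are all correct. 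The step you flag as the main obstacle does close exactly as you predict: since every $d\rho$- or $\del\rho$-carrying term of \cref{eq:derivative-of-admissible-frame} dies on the pair $(\wtZ_{\alpha}, \wtZ_{\ovxb})$, one reads off $\wtxth^{\gamma}([\wtZ_{\alpha}, \wtZ_{\ovxb}]) = -\wtxo_{\alpha}{}^{\gamma}(\wtZ_{\ovxb})$ and $\wtxth^{\ovxg}([\wtZ_{\alpha}, \wtZ_{\ovxb}]) = \wtxo_{\ovxb}{}^{\ovxg}(\wtZ_{\alpha})$, so the term $-(d^{c} u)(\text{horizontal part of } [\wtZ_{\alpha}, \wtZ_{\ovxb}]) = -\tfrac{\sqrt{-1}}{2}\bigl(\wtxo_{\alpha}{}^{\gamma}(\wtZ_{\ovxb}) u_{\gamma} + \wtxo_{\ovxb}{}^{\ovxg}(\wtZ_{\alpha}) u_{\ovxg}\bigr)$ cancels precisely the Christoffel corrections in passing from $\tfrac{\sqrt{-1}}{2}(\wtZ_{\alpha}\wtZ_{\ovxb} + \wtZ_{\ovxb}\wtZ_{\alpha})u$ to $\tfrac{\sqrt{-1}}{2}(u_{\alpha\ovxb} + u_{\ovxb\alpha})$, leaving no torsion or transverse remainder; tracing then yields $-\tfrac{1}{2}\rho\, \wtxcd_{b} u + n \rho N u$ as claimed.
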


We use this expression to study normal derivatives of $\Box_{+}$.

\begin{proposition}
\label{prop:normal-derivative-of-Laplacian}
	Let $m$ be a non-negative integer
	and $u$ be a smooth function on $\ovxco$ satisfying $N u = 0$ near $M$.
	For $0 \leq p \leq n + m +1$,
	the boundary value of $N^{p} \Box_{+} (u \rho^{m})$
	is determined by $u|_{M}$, $R_{\alpha \ovxb \rho \ovxs}$,
	$A_{\alpha \beta}$, and their covariant derivatives.
	Moreover,
	\begin{equation}
		[N^{p} \Box_{+} (u \rho^{m})]|_{M}
		=
		\begin{cases}
			0 & (p < m) \\
			m ! m (n + 1 - m) u |_{M} & (p = m).
		\end{cases}
	\end{equation}
\end{proposition}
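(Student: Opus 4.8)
The plan is to analyze $N^p \Box_+(u\rho^m)|_M$ by leveraging the explicit formula \cref{eq:formula-of-Box_+} for $\Box_+$ in terms of the Graham-Lee connection, together with the hypothesis $Nu = 0$ near $M$ and the constancy results in \cref{prop:normal-derivative-of-GL}. The key structural feature I would exploit is that \cref{eq:formula-of-Box_+} has the schematic form $\Box_+ = \rho^2 \cdot (\text{second-order operator}) + \rho \cdot (\text{first-order operator})$, so every term in $\Box_+$ carries at least one factor of $\rho$. This factor of $\rho$, combined with the $\rho^m$ in the argument, is what drives the vanishing for $p < m$ and produces the clean coefficient at $p = m$.

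**First I would** establish the determinacy claim. Each application of $N$ either hits a factor of $\rho$ (lowering its power), hits $u$ (giving zero by hypothesis, since $Nu=0$ near $M$ means all tangential-only derivatives of $u$ survive but normal derivatives vanish), or hits a coefficient built from $\kappa$, $\wtl_{\alpha\ovxb}$, torsion, and curvature of the Graham-Lee connection. By \cref{prop:normal-derivative-of-GL}, the boundary values of $N^q\kappa$, $\wtna_N^q\wtA_{\alpha\beta}$, and $\wtna_N^q\wtR_{\alpha\ovxb\rho\ovxs}$ for $q \le n+1$ are all expressible via the boundary data $R_{\alpha\ovxb\rho\ovxs}$, $A_{\alpha\beta}$, and their covariant derivatives with respect to $\theta$. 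Since we take at most $p \le n+m+1$ normal derivatives and the available formula $\Box_+(u\rho^m)$ already consumes no normal derivatives of the connection data beyond what is controlled, I would check that the orders stay within the range $q \le n+1$ covered by \cref{prop:normal-derivative-of-GL}; the extra $m$ derivatives can be absorbed by the factors of $\rho$ rather than by differentiating connection coefficients. This gives the determinacy statement.

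**For the quantitative part,** I would compute the leading behavior directly. The only terms in \cref{eq:formula-of-Box_+} that can produce a nonzero contribution at the top order are those where every normal derivative is used either to eliminate a power of $\rho$ or to extract the leading coefficient; because $Nu=0$, the dominant contribution comes from the terms $-\tfrac{1}{2}\rho\,\wtxcd_b u + n\rho N u$ and the leading part of $-\tfrac{\rho^2}{1-\kappa\rho}(N^2 u + \cdots)$. Tracking factors of $\rho$ carefully: in $\Box_+(u\rho^m)$, the lowest total power of $\rho$ present is $\rho^{m+1}$ (since $\Box_+$ contributes at least one $\rho$ and there are $m$ from $u\rho^m$), which immediately gives vanishing for $p < m$ — hold on, the stated vanishing is only $p < m$, so the relevant lowest surviving power under $m$ derivatives must be examined against $\rho^{m+1}$ giving the first nonzero value at $p = m$. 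Applying $N^m$ and using $N\rho = 1$, the combinatorial factor $m!$ arises from differentiating $\rho^m$, and the remaining $\rho$-linear structure of $\Box_+$ together with the dimensional constant in the $\rho\,N$ and $\rho\,\wtxcd_b$ terms produces the eigenvalue-type factor $m(n+1-m)$.

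**The hard part will be** pinning down the exact constant $m!\,m(n+1-m)$: this requires a careful bookkeeping of which terms in \cref{eq:formula-of-Box_+} survive after $m$ applications of $N$ and which vanish due to $Nu=0$ or residual factors of $\rho$. I expect the factor $m$ to come from the $n\rho Nu$-type term acting on $u\rho^m$ (where $N(\rho^m) = m\rho^{m-1}$) combined with the leading $-\rho^2 N^2$ term, and the $(n+1-m)$ to emerge from the competition between the dimension $n$ in $n\rho N$ and the powers of $\rho$ consumed by the Monge-Ampère structure; the subtraction $(1-\kappa\rho)^{-1} \to 1$ at the boundary ensures the transverse curvature $\kappa$ does not enter the leading coefficient. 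I would verify this by expanding $\Box_+(u\rho^m)$ as a polynomial in $\rho$ near $M$, identifying the coefficient of $\rho^0$ after $N^m$, and confirming that the cross terms cancel to leave precisely $m!\,m(n+1-m)\,u|_M$.
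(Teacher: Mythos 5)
There is a genuine gap, and it sits exactly at the heart of the quantitative claim. Your power counting is wrong: you assert that ``the lowest total power of $\rho$ present in $\Box_{+}(u\rho^{m})$ is $\rho^{m+1}$, since $\Box_{+}$ contributes at least one $\rho$.'' This is false, because the normal derivatives \emph{inside} $\Box_{+}$ act on $\rho^{m}$ and lower its power: since $Nu=0$ near $M$ and $N\rho=1$, one has $N^{2}(u\rho^{m})=m(m-1)u\rho^{m-2}$ and $N(u\rho^{m})=mu\rho^{m-1}$, so the terms $-\frac{\rho^{2}}{1-\kappa\rho}N^{2}$ and $n\rho N$ each produce a contribution of order exactly $\rho^{m}$. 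Had your count been right, $N^{p}\Box_{+}(u\rho^{m})$ would vanish on $M$ for all $p\leq m$, contradicting the $p=m$ formula in the statement (whenever $m(n+1-m)\neq 0$). You noticed this tension (``hold on'') but never resolved it, and you deferred the decisive computation (``I would verify this by expanding\dots''), so the proposal as written does not establish the case $p=m$, which is the entire point of the proposition.

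The fix is short and is precisely what the paper does: plug $u\rho^{m}$ into \cref{eq:formula-of-Box_+} and use that $\wtT$, $\wtZ_{\alpha}$, $\wtZ_{\ovxb}$ annihilate $\rho$ while $Nu=0$ near $M$, to get
\begin{equation}
	\Box_{+}(u\rho^{m})
	= -\frac{\rho^{m}}{1-\kappa\rho}\rbra*{m(m-1)u + m\rho\kappa u
		+ \frac{1}{4}\rho^{2}\wtT^{2}u
		+ \frac{1}{2}\rho^{2}(\kappa^{\alpha}u_{\alpha}+\kappa^{\ovxb}u_{\ovxb})}
		- \frac{1}{2}\rho^{m+1}\wtxcd_{b}u + nm\rho^{m}u,
\end{equation}
whose lowest-order part is $\bigl(nm - m(m-1)\bigr)u\rho^{m} = m(n+1-m)u\rho^{m}$, the rest being $O(\rho^{m+1})$. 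Applying $N^{p}$ and restricting to $M$ then gives $0$ for $p<m$ and $m!\,m(n+1-m)u|_{M}$ for $p=m$; note in particular that the constant $m(n+1-m)$ comes from the cancellation between $nm$ (from $n\rho N$) and $-m(m-1)$ (from $-\rho^{2}N^{2}$), not from the mechanism you guessed. Your treatment of the determinacy statement (commutators of $\wtna_{N}$ with tangential derivatives plus \cref{prop:normal-derivative-of-GL}, with the factors of $\rho$ absorbing the extra $m$ derivatives) is in line with the paper's argument and is fine.
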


\begin{proof}
	It follows from \cref{eq:formula-of-Box_+} that
	\begin{align}
		\Box_{+} (u \rho^{m})
		&= - \frac{\rho^{m}}{1 -\kappa \rho} \rbra*{m (m - 1) u + \frac{1}{4} \rho^{2} \wtT^{2} u
			+ m \rho \kappa u + \frac{1}{2} \rho^{2} (\kappa^{\alpha} u_{\alpha}
			+ \kappa^{\ovxb} u_{\ovxb})} \\
		&\quad - \frac{1}{2} \rho^{m + 1} \wtxcd_{b} u + n m \rho^{m} u \\
		&= m (n + 1 - m) \rho^{m} u + O(\rho^{m + 1}).
	\end{align}
	The last equality implies the latter statement.
	The commutators of $\wtna_{N}$ and tangential covariant derivatives are written in term of
	tangential covariant derivatives,
	and the torsion and curvature of the Graham-Lee connection,
	which are expressed by $\wtR_{\alpha \ovxb \rho \ovxs}$,
	$\wtA_{\alpha \beta}$, $\kappa$, and their tangential covariant derivatives.
	This fact and \cref{prop:normal-derivative-of-GL} yield the former statement.
\end{proof}

\begin{theorem}
\label{thm:solution-of-Dirichlet-problem}
	Let $\chi$ be a smooth function on $\ovxco$ with $\chi|_{M} = 0$.
	Then there exist $A, B \in C^{\infty}(\ovxco)$ such that
	$A|_{M} = 0$ and
	\begin{gather}
		\Box_{+} A
		= \chi + O(\rho^{n + 1}), \\
		\Box_{+}(A + B \rho^{n + 1} \log (- \rho))
		= \chi + O(\rho^{n + 2} \log (- \rho)).
	\end{gather}
	Moreover,
	such an $A$ is unique modulo $O(\rho^{n + 1})$ and $B$ is unique modulo $O(\rho)$.
	Furthermore,
	$((N^{p} A)|_{M})_{p = 1}^{n}$ and $B|_{M}$
	are determined by $((N^{l} \chi)|_{M})_{l = 1}^{n + 1}$,
	$R_{\alpha \ovxb \rho \ovxs}$, $A_{\alpha \beta}$,
	and their covariant derivatives.
\end{theorem}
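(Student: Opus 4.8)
The plan is to solve the problem formally order by order in $\rho$, exploiting the indicial behavior of $\Box_{+}$ recorded in \cref{prop:normal-derivative-of-Laplacian}. The key point is that, for a function $u$ with $N u = 0$ near $M$,
\begin{equation}
	\Box_{+}(u \rho^{m})
	= m (n + 1 - m) u \rho^{m} + O(\rho^{m + 1}),
\end{equation}
so the indicial polynomial is $I(m) = m (n + 1 - m)$, whose only nonnegative integer roots are $m = 0$ and $m = n + 1$. First I would construct $A$ as the finite sum $A = \sum_{m = 1}^{n} a_{m} \rho^{m}$, where each $a_{m}$ is the $N$-constant extension of a function on $M$. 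Starting from $A_{0} = 0$ (so that $[\Box_{+} A_{0} - \chi]|_{M} = - \chi|_{M} = 0$), I would determine $a_{m}$ inductively: having arranged $\Box_{+} A_{m - 1} - \chi = O(\rho^{m})$, \cref{prop:normal-derivative-of-Laplacian} shows that adding $a_{m} \rho^{m}$ changes the $m$-th normal derivative of $\Box_{+} A$ at $M$ by $m! \, m (n + 1 - m) a_{m}|_{M}$ while leaving the lower-order derivatives untouched. Since $m (n + 1 - m) \neq 0$ for $1 \leq m \leq n$, I can solve
\begin{equation}
	a_{m}|_{M}
	= \frac{[N^{m}(\chi - \Box_{+} A_{m - 1})]|_{M}}{m! \, m (n + 1 - m)}
\end{equation}
at each step. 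After $n$ steps all normal derivatives of $\Box_{+} A - \chi$ up to order $n$ vanish on $M$, whence $\Box_{+} A - \chi = O(\rho^{n + 1})$.

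The obstruction to continuing is precisely that $I(n + 1) = 0$, so no smooth term $a_{n + 1} \rho^{n + 1}$ can remove the remaining $O(\rho^{n + 1})$ error. This is where the logarithmic term enters, and computing its effect is the main new calculation. Writing $\Box_{+} A - \chi = g \rho^{n + 1} + O(\rho^{n + 2})$ with $g \in C^{\infty}(\ovxco)$, I would compute, directly from \cref{eq:formula-of-Box_+}, that for an $N$-constant function $B$
\begin{equation}
	\Box_{+}(B \rho^{n + 1} \log (- \rho))
	= - (n + 1) B \rho^{n + 1} + O(\rho^{n + 2} \log (- \rho));
\end{equation}
the logarithmic contributions at order $\rho^{n + 1}$ cancel because $I(n + 1) = 0$, and the surviving coefficient $-(n + 1) = I'(n + 1)$ is nonzero. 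Taking $B$ to be the $N$-constant extension of $B|_{M} = g|_{M} / (n + 1)$ then cancels the order-$\rho^{n + 1}$ term, leaving $\Box_{+}(A + B \rho^{n + 1} \log (- \rho)) - \chi = O(\rho^{n + 2} \log (- \rho))$ as required.

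For uniqueness, I would run the same indicial argument in reverse. If $A$ satisfies $A|_{M} = 0$ and $\Box_{+} A = O(\rho^{n + 1})$, then the lowest-order nonvanishing coefficient $a_{m}|_{M}$ of $A$ with $m \leq n$ would produce a nonzero term $m (n + 1 - m) a_{m} \rho^{m}$ in $\Box_{+} A$, contradicting $\Box_{+} A = O(\rho^{n + 1})$; hence $A = O(\rho^{n + 1})$, giving uniqueness of $A$ modulo $O(\rho^{n + 1})$. Changing $A$ by $O(\rho^{n + 1})$ alters $g|_{M}$ only through $\Box_{+}(\rho^{n + 1} w) = O(\rho^{n + 2})$ (again since $I(n + 1) = 0$), so $B|_{M}$, and hence $B$ modulo $O(\rho)$, is unchanged; this yields the asserted uniqueness of $B$. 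Finally, the determination by boundary data is read off from the construction: each $a_{m}|_{M}$ and $B|_{M}$ is a universal expression in the $[N^{l} \chi]|_{M}$ for $1 \leq l \leq n + 1$ together with the quantities $[N^{p} \Box_{+}(a_{j} \rho^{j})]|_{M}$, which by \cref{prop:normal-derivative-of-Laplacian} are in turn determined by the $a_{j}|_{M}$, the curvature $R_{\alpha \ovxb \rho \ovxs}$, the torsion $A_{\alpha \beta}$, and their covariant derivatives. The only genuinely delicate step is the logarithmic computation above; everything else is bookkeeping driven by the indicial polynomial.
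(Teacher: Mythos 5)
Your proposal is correct and takes essentially the same approach as the paper: an inductive construction of $A = \sum_{m=1}^{n} a_m \rho^m$ with $N$-constant coefficients determined by the same formula via \cref{prop:normal-derivative-of-Laplacian}, followed by the same computation $\Box_{+}(B\rho^{n+1}\log(-\rho)) = -(n+1)B\rho^{n+1} + O(\rho^{n+2}\log(-\rho))$ to fix $B|_M$, with uniqueness and boundary-determination read off from the construction. Your explicit framing in terms of the indicial polynomial $I(m) = m(n+1-m)$ and the spelled-out reverse argument for uniqueness are slightly more detailed than the paper's wording, but mathematically identical.
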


\begin{proof}
	We first construct $u_{1}, \dots , u_{n} \in C^{\infty}(\ovxco)$ inductively
	such that $A_{k} = \sum_{i = 1}^{k} u_{i} \rho^{i}$ satisfies
	\begin{equation}
		\Box_{+} A_{k}
		= \chi + O(\rho^{k + 1}),
	\end{equation}
	$N u_{i} = 0$ near $M$,
	and $u_{i} |_{M}$ is expressed by $((N^{l} \chi)|_{M})_{l = 1}^{i}$,
	$R_{\alpha \ovxb \rho \ovxs}$, $A_{\alpha \beta}$,
	and their covariant derivatives.
	Take $u_{1} \in C^{\infty}(\ovxco)$ such that
	$N u_{1} = 0$ near $M$ and $u_{1} |_{M} = n^{- 1} (N \chi)|_{M}$.
	Then $u_{1}$ satisfies
	\begin{equation}
		\Box_{+} (u_{1} \rho)
		= \chi + O(\rho^{2}).
	\end{equation}
	Assume that we obtain $u_{1}, \dots , u_{k - 1} \in C^{\infty}(\ovxco)$
	such that $A_{k - 1} = \sum_{i = 1}^{k - 1} u_{i} \rho^{i}$ satisfies
	\begin{equation}
		\Box_{+} A_{k - 1}
		= \chi + O(\rho^{k}),
	\end{equation}
	each $u_{i}$ satisfies
	$N u_{i} = 0$ near $M$,
	and $u_{i} |_{M}$ is written in terms of $((N^{l} \chi)|_{M})_{l = 1}^{i}$,
	$R_{\alpha \ovxb \rho \ovxs}$, $A_{\alpha \beta}$,
	and their covariant derivatives.
	If we take $u_{k} \in C^{\infty}(\ovxco)$ so that
	$N u_{k} = 0$ near $M$ and
	\begin{equation}
		u_{k}|_{M}
		= \frac{1}{k ! k (n + 1 - k)}
			\rbra*{(N^{k} \chi)|_{M} - (N^{k} \Box_{+} A_{k - 1})|_{M}},
	\end{equation}
	then $u_{k}|_{M}$ is determined by $((N^{l} \chi)|_{M})_{l = 1}^{k}$,
	$R_{\alpha \ovxb \rho \ovxs}$, $A_{\alpha \beta}$,
	and their covariant derivatives by \cref{prop:normal-derivative-of-Laplacian}.
	Moreover,
	$A_{k} = \sum_{i = 1}^{k} u_{i} \rho^{i}$ satisfies
	\begin{equation}
		\Box_{+} A_{k}
		= \chi + O(\rho^{k + 1}).
	\end{equation}
	Hence $A \coloneqq \sum_{i = 1}^{n} u_{i} \rho^{i}$ satisfies
	\begin{equation}
		\Box_{+} A
		= \chi + O(\rho^{n + 1}).
	\end{equation}
	It follows from the construction that
	$A$ is unique modulo $O(\rho^{n + 1})$.
	Moreover,
	$(N^{p} A)|_{M} = p ! u_{p}|_{M}$ is written in terms of $((N^{l} \chi)|_{M})_{l = 1}^{p}$,
	$R_{\alpha \ovxb \rho \ovxs}$, $A_{\alpha \beta}$,
	and their covariant derivatives.
	Next,
	let $B \in C^{\infty}(\ovxco)$.
	Then we derive from \cref{eq:formula-of-Box_+} that
	\begin{equation}
		\Box_{+} (B \rho^{n + 1} \log (- \rho))
		= - (n + 1) B \rho^{n + 1} + O(\rho^{n + 2} \log (- \rho)).
	\end{equation}
	If we take $B$ so that $N B = 0$ near $M$ and
	\begin{equation}
		B|_{M}
		= - \frac{1}{(n + 1) ! (n + 1)}
			\rbra*{(N^{n + 1} \chi)|_{M} - (N^{n + 1} \Box_{+} A)|_{M}},
	\end{equation}
	then we have
	\begin{equation}
		\Box_{+}(A + B \rho^{n + 1} \log (- \rho))
		= \chi + O(\rho^{n + 2} \log (- \rho)).
	\end{equation}
	This $B$ is unique modulo $O(\rho)$ by the construction
	and \cref{prop:normal-derivative-of-Laplacian}.
	Moreover,
	$B|_{M}$ is determined by $((N^{l} \chi)|_{M})_{l = 1}^{n + 1}$,
	$R_{\alpha \ovxb \rho \ovxs}$, $A_{\alpha \beta}$,
	and their covariant derivatives.
\end{proof}

This result implies the existence of an asymptotic solution to
the Dirichlet problem with respect to $\Box_{+}$.

\begin{proposition}
\label{prop:harmonic-extension}
	Let $f \in C^{\infty}(M)$.
	Then there exist $\wtf, \wtg \in C^{\infty}(\ovxco)$ such that
	$\wtf|_{M} = f$ and
	\begin{gather}
		\Box_{+} \wtf
		= O(\rho^{n + 1}), \\
		\Box_{+} (\wtf + \wtg \rho^{n + 1} \log (- \rho))
		= O(\rho^{n + 2} \log (- \rho)).
	\end{gather}
	Moreover,
	such an $\wtf$ is unique modulo $O(\rho^{n + 1})$
	and $\wtg$ is unique modulo $O(\rho)$.
	Furthermore,
	$((N^{p} \wtf)|_{M})_{p = 0}^{n}$ and $\wtg|_{M}$
	are determined by $f$, $R_{\alpha \ovxb \rho \ovxs}$, $A_{\alpha \beta}$,
	and their covariant derivatives.
\end{proposition}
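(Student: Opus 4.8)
The plan is to reduce everything to the inhomogeneous problem already settled in \cref{thm:solution-of-Dirichlet-problem}. First I would pick any smooth extension $f_{0} \in C^{\infty}(\ovxco)$ of $f$ normalized by $N f_{0} = 0$ near $M$ (extend $f$ to be constant along the integral curves of $N$). The key preliminary observation is that every summand on the right-hand side of \cref{eq:formula-of-Box_+} carries an explicit factor of $\rho$, so $\Box_{+} u = O(\rho)$ for every $u \in C^{\infty}(\ovxco)$. In particular the defect $\chi \coloneqq - \Box_{+} f_{0}$ satisfies $\chi|_{M} = 0$, which is precisely the standing hypothesis of \cref{thm:solution-of-Dirichlet-problem}.

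Next I would feed this $\chi$ into \cref{thm:solution-of-Dirichlet-problem}, producing $A, B \in C^{\infty}(\ovxco)$ with $A|_{M} = 0$, $\Box_{+} A = \chi + O(\rho^{n + 1})$, and $\Box_{+}(A + B \rho^{n + 1} \log(- \rho)) = \chi + O(\rho^{n + 2} \log(- \rho))$. Setting $\wtf \coloneqq f_{0} + A$ and $\wtg \coloneqq B$ then gives $\wtf|_{M} = f$, and since $\Box_{+} f_{0} + \chi = 0$ by construction, both required asymptotic identities follow at once from the linearity of $\Box_{+}$. This disposes of existence.

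For the uniqueness and dependence claims I would argue as follows. If $\wtf_{1}, \wtf_{2}$ both solve the first identity with boundary value $f$, then $\wtf_{1} - \wtf_{2}$ vanishes on $M$ and is annihilated by $\Box_{+}$ modulo $O(\rho^{n + 1})$; the uniqueness clause of \cref{thm:solution-of-Dirichlet-problem} applied with $\chi = 0$ forces $\wtf_{1} - \wtf_{2} = O(\rho^{n + 1})$, and the corresponding clause for $B$ pins down $\wtg$ modulo $O(\rho)$. For the dependence, note that $(N^{p} \wtf)|_{M}$ equals $f$ when $p = 0$ and equals $(N^{p} A)|_{M}$ for $1 \leq p \leq n$ because $N f_{0} = 0$ near $M$. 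The commutator argument underlying \cref{prop:normal-derivative-of-Laplacian} (with $m = 0$) shows that each $(N^{l} \chi)|_{M} = - (N^{l} \Box_{+} f_{0})|_{M}$ for $1 \leq l \leq n + 1$ is expressible through $f$, $R_{\alpha \ovxb \rho \ovxs}$, $A_{\alpha \beta}$, and their covariant derivatives; plugging this into the final clause of \cref{thm:solution-of-Dirichlet-problem} yields the asserted dependence of $((N^{p} \wtf)|_{M})_{p = 0}^{n}$ and $\wtg|_{M}$.

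I do not anticipate a genuine obstacle here: the statement is a clean corollary of \cref{thm:solution-of-Dirichlet-problem}, and the content-bearing input is the fact that $\Box_{+}$ lowers the order of vanishing by at most one order less than the naive count, so that $\chi$ already vanishes on $M$. The only step demanding real attention is verifying that $\chi$ together with all its normal derivatives at $M$ depends solely on $f$ and the intrinsic boundary geometry, which is exactly the content delivered by the commutator bookkeeping behind \cref{prop:normal-derivative-of-Laplacian} and \cref{prop:normal-derivative-of-GL}.
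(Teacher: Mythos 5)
Your proposal is correct and takes essentially the same route as the paper's own proof: extend $f$ by a function with vanishing normal derivative near $M$, treat $\chi = \mp\Box_{+}$ of that extension as the data for \cref{thm:solution-of-Dirichlet-problem}, correct by $A + B\rho^{n+1}\log(-\rho)$, and invoke \cref{prop:normal-derivative-of-Laplacian} (with $m = 0$) for the dependence of $(N^{l}\chi)|_{M}$ on $f$ and the boundary geometry. The only differences are cosmetic (your sign convention $\chi = -\Box_{+}f_{0}$ versus the paper's $\chi = \Box_{+}\wtf_{0}$ with $\wtf = \wtf_{0} - A$, $\wtg = -B$) and your slightly more explicit unpacking of the uniqueness clause, which the paper leaves as an immediate consequence of \cref{thm:solution-of-Dirichlet-problem}.
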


\begin{definition}[\cite{Hirachi2014}*{Section 3.1}]
	The \emph{critical CR GJMS operator} $P$ is defined by
	$P f \coloneqq - n ! (n + 1) ! \wtg|_{M}$,
	which is expressed by $f$, $R_{\alpha \ovxb \rho \ovxs}$, $A_{\alpha \beta}$,
	and their covariant derivatives.
\end{definition}

\begin{proof}[Proof of \cref{prop:harmonic-extension}]
	Take $\wtf_{0} \in C^{\infty}(\ovxco)$ such that
	$N \wtf_{0} = 0$ near $M$ and $\wtf_{0}|_{M} = f$,
	and set $\chi \coloneqq \Box_{+} \wtf_{0}$.
	Then $\chi|_{M} = 0$ and $(N^{p} \chi)|_{M}$ is expressed by
	$f$, $R_{\alpha \ovxb \rho \ovxs}$, $A_{\alpha \beta}$,
	and their covariant derivatives for any $1 \leq p \leq n + 1$
	by \cref{prop:normal-derivative-of-Laplacian}.
	It follows from \cref{thm:solution-of-Dirichlet-problem} that
	there exist $A, B \in C^{\infty}(\ovxco)$ such that
	$A|_{M} = 0$ and
	\begin{gather}
		\Box_{+} A
		= \chi + O(\rho^{n + 1}), \\
		\Box_{+} (A + B \rho^{n + 1} \log (- \rho))
		= \chi + O(\rho^{n + 2} \log (- \rho)).
	\end{gather}
	If we set $\wtf \coloneqq \wtf_{0} - A$ and $\wtg = - B$,
	then
	\begin{gather}
		\Box_{+} \wtf
		= O(\rho^{n + 1}), \\
		\Box_{+} (\wtf + \wtg \rho^{n + 1} \log (- \rho))
		= O(\rho^{n + 2} \log (- \rho)).
	\end{gather}
	The remaining statements follow from \cref{thm:solution-of-Dirichlet-problem}.
\end{proof}

It is known that any $f \in \scrP$ has a pluriharmonic extension $\wtf$ on the pseudoconvex side;
in particular, $\wtf$ is smooth up to the boundary.
In this case,
$(N^{n + 1} \wtf)|_{M}$ is also determined by the boundary data.

\begin{proposition}
\label{prop:pluriharmonic-extension}
	Let $\wtf$ be the pluriharmonic extension to $\ovxco$ of $f \in \scrP$.
	Then,
	for $0 \leq p \leq n + 1$,
	the boundary value of $N^{p} \wtf$ is determined by $f$,
	$R_{\alpha \ovxb \rho \ovxs}$, $A_{\alpha \beta}$,
	and their covariant derivatives.
\end{proposition}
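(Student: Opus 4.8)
The plan is to split the range $0 \leq p \leq n + 1$ into the two parts $0 \leq p \leq n$ and $p = n + 1$, using only the weak consequence $\Box_{+} \wtf = 0$ for the former and reserving the full strength of pluriharmonicity for the top order. First I observe that since $\wtf$ is pluriharmonic, $d d^{c} \wtf = \sqrt{- 1} \del \delb \wtf = 0$, so the defining relation $(\Box_{+} u) \omega_{+}^{n + 1} = - (n + 1) d d^{c} u \wedge \omega_{+}^{n}$ forces $\Box_{+} \wtf = 0$; in particular $\Box_{+} \wtf = O(\rho^{n + 1})$ and $\wtf|_{M} = f$. The extension produced by \cref{prop:harmonic-extension} satisfies the same two conditions, and that proposition asserts uniqueness modulo $O(\rho^{n + 1})$; hence $\wtf$ agrees with it to that order, and $(N^{p} \wtf)|_{M}$ equals the corresponding boundary value there for every $0 \leq p \leq n$. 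These are thus determined by $f$, $R_{\alpha \ovxb \rho \ovxs}$, $A_{\alpha \beta}$, and their covariant derivatives.

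The case $p = n + 1$ is genuinely different: the indicial factor $m(n + 1 - m)$ appearing in \cref{prop:normal-derivative-of-Laplacian} vanishes at $m = n + 1$, so $(N^{n + 1} \wtf)|_{M}$ never enters $[N^{n + 1} \Box_{+} \wtf]|_{M}$ and cannot be recovered from $\Box_{+} \wtf = 0$ alone. Here I would exploit the vanishing of the \emph{full} complex Hessian through \cref{eq:dd^c(NT)}: evaluating that lemma at $u = \wtf$ and using $d d^{c} \wtf = 0$ yields the pointwise identity
\[
	N^{2} \wtf
	= - \tfrac{1}{4} \wtT^{2} \wtf
	- \kappa N \wtf
	- \tfrac{1}{2} \kappa^{\alpha} \wtZ_{\alpha} \wtf
	- \tfrac{1}{2} \kappa^{\ovxb} \wtZ_{\ovxb} \wtf,
\]
which trades two transverse derivatives for one transverse derivative plus tangential operators with coefficients built from $\kappa$.

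The structural point that makes the induction close is that the Graham-Lee connection renders both $N$ and $\wtT$ parallel, so each bracket $[N, \wtT]$, $[N, \wtZ_{\alpha}]$, $[N, \wtZ_{\ovxb}]$ is again \emph{tangential}, its coefficients being $\kappa$, $\wtA_{\alpha \beta}$, and the Graham-Lee curvature; a direct computation gives, for instance, $[N, \wtT] = - \kappa \wtT - \sqrt{- 1} \kappa^{\beta} \wtZ_{\beta} + \sqrt{- 1} \kappa^{\ovxb} \wtZ_{\ovxb}$, with no $N$-component. Applying $N^{n - 1}$ to the displayed identity and restricting to $M$, the only term that can raise the transverse order of $\wtf$ is $N^{n - 1}(\kappa N \wtf)$, whose leading piece is $\kappa \, (N^{n} \wtf)|_{M}$; every other contribution is a tangential operator applied to some $N^{q} \wtf$ with $q \leq n - 1$, multiplied by normal derivatives of $\kappa$, $\wtA_{\alpha \beta}$, and $\wtR_{\alpha \ovxb \rho \ovxs}$ of order at most $n - 1$.

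Consequently $(N^{n + 1} \wtf)|_{M}$ is expressed through the boundary values $\{(N^{q} \wtf)|_{M}\}_{q = 0}^{n}$ already handled in the first step, together with normal derivatives of the Graham-Lee curvature and torsion of order $\leq n - 1$; by \cref{prop:normal-derivative-of-GL} the latter are given by $R_{\alpha \ovxb \rho \ovxs}$, $A_{\alpha \beta}$, and their covariant derivatives, the available budget $p \leq n + 1$ comfortably covering the orders used. This settles all $0 \leq p \leq n + 1$. The main obstacle is precisely verifying the top order, namely that differentiating the second-order identity $n - 1$ times never regenerates an $N^{n + 1} \wtf$ (or higher) term; this rests entirely on the tangency of the brackets $[N, \cdot]$, after which the matching against \cref{prop:normal-derivative-of-GL} is routine bookkeeping.
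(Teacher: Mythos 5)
Your proof is correct and follows essentially the same route as the paper: the range $0 \leq p \leq n$ is handled by observing $\Box_{+} \wtf = 0$ and invoking the uniqueness statement of \cref{prop:harmonic-extension}, and the top order $p = n + 1$ by applying $N^{n - 1}$ to the identity $d d^{c} \wtf (N, \wtT) = 0$ coming from \cref{eq:dd^c(NT)}, with \cref{prop:normal-derivative-of-GL} controlling the resulting coefficients. The only difference is bookkeeping: the paper isolates the top coefficient via the expansion $\wtf = \sum_{i} u_{i} \rho^{i}$ with $N u_{i} = 0$ near $M$, whereas you commute $N$ past the tangential fields directly (your bracket formula for $[N, \wtT]$ is indeed correct); both devices rest on the same fact that brackets of $N$ with tangential derivatives are again tangential.
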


\begin{proof}
	Take $u_{0}, \dots , u_{n + 1} \in C^{\infty}(\ovxco)$ so that
	$N u_{i} = 0$ near $M$ and
	\begin{equation}
		\wtf
		= \sum_{i = 0}^{n + 1} u_{i} \rho^{i} + O(\rho^{n + 2}).
	\end{equation}
	Since $d d^{c} \wtf = 0$ and $\omega_{+}$ is \Kahler,
	$\Box_{+} \wtf = 0$.
	It follows from \cref{prop:harmonic-extension} that
	$(N^{p} \wtf)|_{M} = p ! u_{p}|_{M}$
	is written in terms of $f$, $R_{\alpha \ovxb \rho \ovxs}$, $A_{\alpha \beta}$,
	and their covariant derivatives for $0 \leq p \leq n$.
	Hence it suffices to show that $(N^{n + 1} \wtf)|_{M}$ is determined by
	$f$, $R_{\alpha \ovxb \rho \ovxs}$, $A_{\alpha \beta}$,
	and their covariant derivatives.
	Since $d d^{c} \wtf = 0$,
	\begin{equation}
		0
		= d d^{c} \wtf (N, \wtT)
		= n (n + 1) u_{n + 1} \rho^{n - 1}
			+ \sum_{i = 0}^{n} d d^{c} (u_{i} \rho^{i}) (N, \wtT)
			+ O(\rho^{n}).
	\end{equation}
	We obtain from \cref{eq:dd^c(NT),prop:normal-derivative-of-GL} that
	\begin{equation}
		(N^{n + 1} \wtf)|_{M}
		= (n + 1)! u_{n + 1}|_{M}
		= - \sum_{i = 0}^{n} \sbra{N^{n - 1} (d d^{c} (u_{i} \rho^{i})(N, \wtT))}|_{M}
	\end{equation}
	is expressed by $f$,
	$R_{\alpha \ovxb \rho \ovxs}$, $A_{\alpha \beta}$,
	and their covariant derivatives.
\end{proof}

\subsection{Burns-Epstein's renormalized connection}
\label{subsection:renormalized-connection}

Fix a local frame of $T^{1, 0} \Omega$.
Let $\psi_{a} {}^{b}$ be the Chern connection form with respect to $\omega_{+}$.
This connection diverges on the boundary since so does $\omega_{+}$.
The \emph{renormalized connection form} $\theta_{a} {}^{b}$ is defined by
\begin{equation}
	\theta_{a} {}^{b}
	= \psi_{a} {}^{b}
		+ \frac{1}{\rho} (\delta_{a} {}^{b} \rho_{c}
		+ \delta_{c} {}^{b} \rho_{a}) \wtxth^{c}. 
\end{equation}
This connection form can be extended smoothly up to the boundary;
see~\cite{Marugame2021}*{Proposition 4.1} for example.
The corresponding curvature form is denoted by $\Theta_{a} {}^{b}$,
which satisfies
\begin{equation}
\label{eq:trace-of-renormalized-curvature}
	\Tr \Theta
	= d d^{c} O(\rho^{n + 2});
\end{equation}
see~\cite{Marugame2016}*{(4.7)}.
Near the boundary,
$\Theta_{a} {}^{b}$ is written in terms of the Graham-Lee connection.

\begin{lemma}[\cite{Marugame2016}*{Proposition 3.5}]
\label{lem:renormalized-connection}
	For the frame $(\wtxth^{\alpha}, \wtxth^{\infty} = \partial \rho)$,
	\begin{gather}
		\theta_{\alpha} {}^{\beta}
		= \wtxo_{\alpha} {}^{\beta}
			+ \frac{1}{2} \kappa (\del \rho - \delb \rho) \delta_{\alpha} {}^{\beta}, \\
		\theta_{\infty} {}^{\beta}
		= \kappa \wtxth^{\beta}
			- \sqrt{- 1} \wtA^{\beta} {}_{\ovxg} \wtxth^{\ovxg}
			- \kappa^{\beta} \delb \rho, \\
		\theta_{\alpha} {}^{\infty}
		= - \wtl_{\alpha \ovxg} \wtxth^{\ovxg}
			- \rho (1 - \kappa \rho)^{- 1} \kappa_{\alpha} \del \rho
			+ \sqrt{- 1} \rho (1 - \kappa \rho)^{- 1} \wtA_{\alpha \beta} \wtxth^{\beta}, \\
		\theta_{\infty} {}^{\infty}
		= - \kappa \delb \rho
			- \rho \kappa^{2} (1 - \kappa \rho)^{- 1} \del \rho
			- \rho (1 - \kappa \rho)^{- 1} \del \kappa.
	\end{gather}
\end{lemma}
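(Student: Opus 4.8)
The plan is to compute the Chern connection $\psi_{a} {}^{b}$ of $\omega_{+}$ in the frame $(\wtxth^{\alpha}, \wtxth^{\infty} = \del \rho)$ and then recover $\theta_{a} {}^{b} = \psi_{a} {}^{b} + \rho^{-1}(\delta_{a} {}^{b} \rho_{c} + \delta_{c} {}^{b} \rho_{a}) \wtxth^{c}$. Since $\omega_{+} = - d d^{c} \log(- \rho)$ is exact, hence \Kahler, its Chern connection is torsion-free; thus, writing $\omega_{+} = \sqrt{- 1} g_{a \ovb} \wtxth^{a} \wedge \wtxth^{\ovb}$, the forms $\psi_{a} {}^{b}$ (defined by $\nabla \wtZ_{a} = \psi_{a} {}^{b} \wtZ_{b}$) are the unique solution of the first structure equation $d \wtxth^{b} = \wtxth^{a} \wedge \psi_{a} {}^{b}$ together with compatibility with the Hermitian metric $g_{a \ovb}$. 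By this uniqueness it suffices to read off the $g_{a \ovb}$, to convert the claimed formulae into a candidate $\psi_{a} {}^{b}$ via the displayed relation, and to check that this candidate satisfies both conditions.

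First I would record the metric. From $\vartheta = (\sqrt{- 1} / 2)(\delb \rho - \del \rho)$ and $d \rho = \del \rho + \delb \rho$ one computes $d \rho \wedge \vartheta = \sqrt{- 1} \, \wtxth^{\infty} \wedge \wtxth^{\ovin}$, so the given expression for $\omega_{+}$ yields $g_{\alpha \ovxb} = (- \rho)^{-1} \wtl_{\alpha \ovxb}$ and $g_{\infty \ovin} = (1 - \kappa \rho) \rho^{-2}$, all mixed components vanishing. Because $\rho_{\alpha} = \wtZ_{\alpha} \rho = 0$ and $\rho_{\infty} = \wtZ_{\infty} \rho = 1$, the subtracted term equals $\rho^{-1} \delta_{\alpha} {}^{\beta} \del \rho$ on the $(\alpha, \beta)$ block, $\rho^{-1} \wtxth^{\beta}$ on the $(\infty, \beta)$ block, $0$ on the $(\alpha, \infty)$ block, and $2 \rho^{-1} \del \rho$ on the $(\infty, \infty)$ block; subtracting these from the claimed $\theta_{a} {}^{b}$ gives the candidate $\psi_{a} {}^{b}$.

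Next I would verify metric compatibility block by block. On the tangential $(\alpha, \ovxb)$ block, substituting $\psi_{\alpha} {}^{\gamma} = \wtxo_{\alpha} {}^{\gamma} + \frac{1}{2} \kappa (\del \rho - \delb \rho) \delta_{\alpha} {}^{\gamma} - \rho^{-1} \delta_{\alpha} {}^{\gamma} \del \rho$ into $\psi_{\alpha} {}^{\gamma} g_{\gamma \ovxb} + g_{\alpha \ovxg} \overline{\psi_{\beta} {}^{\gamma}}$ makes the $\kappa (\del \rho - \delb \rho)$ terms cancel in conjugate pairs, while the $\rho^{-1}$ terms combine into $- \rho^{-1} d \rho \cdot \wtl_{\alpha \ovxb}$; using $d \wtl_{\alpha \ovxb} = \wtxo_{\alpha} {}^{\gamma} \wtl_{\gamma \ovxb} + \wtl_{\alpha \ovxg} \wtxo_{\ovxb} {}^{\ovxg}$ and $d (- \rho)^{-1} = \rho^{-2} d \rho$ this matches $d g_{\alpha \ovxb}$ exactly. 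The cross block, in which $g_{\alpha \ovin} = 0$, reduces to $\psi_{\alpha} {}^{\infty} g_{\infty \ovin} = - g_{\alpha \ovxb} \overline{\psi_{\infty} {}^{\beta}}$; substituting the candidate $\psi_{\infty} {}^{\beta}$ and simplifying via $(\rho \kappa - 1)(1 - \kappa \rho)^{-1} = - 1$ reproduces precisely the claimed $\psi_{\alpha} {}^{\infty}$, so this block ties the $(\alpha, \infty)$ component to the $(\infty, \beta)$ one. The normal block $(\infty, \ovin)$ constrains only the Hermitian part $\psi_{\infty} {}^{\infty} + \overline{\psi_{\infty} {}^{\infty}}$; differentiating $(1 - \kappa \rho) \rho^{-2}$ shows that the term $- \rho (1 - \kappa \rho)^{-1} \del \kappa$ in the candidate is exactly what is needed, the coefficient of $d \rho$ collapsing by $\kappa (1 - \kappa \rho) + \rho \kappa^{2} = \kappa$.

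Finally I would verify the structure equation $d \wtxth^{b} = \wtxth^{a} \wedge \psi_{a} {}^{b}$. For $b = \beta$ this matches \cref{eq:derivative-of-admissible-frame} against $\wtxth^{\alpha} \wedge \psi_{\alpha} {}^{\beta} + \del \rho \wedge \psi_{\infty} {}^{\beta}$, and for $b = \infty$ one uses $d \del \rho = \sqrt{- 1}(\wtxm + \kappa \, d \rho \wedge \vartheta)$, read off from \cref{eq:derivative-of-vartheta}, against $\wtxth^{\alpha} \wedge \psi_{\alpha} {}^{\infty} + \del \rho \wedge \psi_{\infty} {}^{\infty}$; together with the compatibility checks this fixes the anti-Hermitian part of $\psi_{\infty} {}^{\infty}$ and closes the system, whence uniqueness of the Chern connection gives the lemma. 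The main obstacle is the bookkeeping in the blocks involving the normal direction: the torsion terms $\wtA_{\alpha \beta}$, the transverse-curvature data $\kappa_{\alpha}$, $\kappa^{\beta}$, $\del \kappa$, and the factors $(1 - \kappa \rho)^{-1}$ must be tracked through each of the $(2, 0)$-, $(1, 1)$-, and $(0, 2)$-type pieces of the structure equation, which is far more delicate than the tangential block.
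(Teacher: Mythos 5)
The paper itself offers no proof of this lemma: it is imported verbatim from Marugame's work (cited as \cite{Marugame2016}*{Proposition 3.5}), so the only comparison available is with that external argument, which likewise pins down the Chern connection of $\omega_{+}$ in the Graham--Lee frame. Your verification strategy is sound and, as far as I can check, every computation you sketch is correct: the metric components $g_{\alpha \ovxb} = (- \rho)^{- 1} \wtl_{\alpha \ovxb}$ and $g_{\infty \ovin} = (1 - \kappa \rho) \rho^{- 2}$ follow from $d \rho \wedge \vartheta = \sqrt{- 1}\, \del \rho \wedge \delb \rho$; the block values of $\rho^{- 1} (\delta_{a} {}^{b} \rho_{c} + \delta_{c} {}^{b} \rho_{a}) \wtxth^{c}$ are as you state (using $\rho_{\alpha} = 0$, $\rho_{\infty} = 1$); the cross block $\psi_{\alpha} {}^{\infty} g_{\infty \ovin} = - g_{\alpha \ovxg} \overline{\psi_{\infty} {}^{\gamma}}$ does reproduce $\theta_{\alpha} {}^{\infty}$ via $\wtl_{\alpha \ovxg} \kappa^{\ovxg} = \kappa_{\alpha}$ and $(\rho \kappa - 1)(1 - \kappa \rho)^{- 1} = - 1$; the normal block closes by $\kappa (1 - \kappa \rho) + \rho \kappa^{2} = \kappa$; and both structure equations hold, the case $b = \infty$ using $d \del \rho = \sqrt{- 1} (\wtxm + \kappa\, d \rho \wedge \vartheta)$ together with the symmetry $\wtA_{\alpha \beta} = \wtA_{\beta \alpha}$ to kill the term $\wtA_{\alpha \beta} \wtxth^{\alpha} \wedge \wtxth^{\beta}$, and the case $b = \beta$ matching \cref{eq:derivative-of-admissible-frame} term by term. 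What your approach buys is a self-contained proof inside the framework of this paper, needing nothing beyond \cref{eq:derivative-of-vartheta}, \cref{eq:derivative-of-admissible-frame}, and the structure equations of the Graham--Lee connection.

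Two points should be tightened. First, the uniqueness statement you lean on --- that the torsion-free structure equation $d \wtxth^{b} = \wtxth^{a} \wedge \psi_{a} {}^{b}$ plus metric compatibility determines $\psi_{a} {}^{b}$ in an arbitrary (non-holomorphic) frame of $(1, 0)$-forms --- is the linchpin and deserves its one-line proof: if $\sigma_{a} {}^{b}$ is the difference of two solutions, then $\wtxth^{a} \wedge \sigma_{a} {}^{b} = 0$ forces, by Cartan's lemma applied in the basis $(\wtxth^{a}, \wtxth^{\ovb})$, that $\sigma_{a} {}^{b}$ is of type $(1, 0)$ with coefficients symmetric in the lower indices; the difference of the compatibility equations then splits by type into $\sigma_{a} {}^{c} g_{c \ovb} = 0$ and its conjugate, and nondegeneracy of $g$ gives $\sigma = 0$. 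Without this, your ``closes the system'' remark is not yet an argument, since (as you yourself note) the normal compatibility block alone sees only the Hermitian part of $\psi_{\infty} {}^{\infty}$ and the structure equation for $b = \infty$ is blind to its $\del \rho$-component. Second, ``exact, hence K\"ahler'' conflates closedness with positivity: $\omega_{+}$ is K\"ahler only near $M$ (where it is positive, as stated in the paper), so the whole computation, and the lemma, are near-boundary statements. A cosmetic slip: in the tangential block the leftover terms combine to $- \rho^{- 1} d \rho \cdot g_{\alpha \ovxb} = \rho^{- 2} d \rho \, \wtl_{\alpha \ovxb}$, not $- \rho^{- 1} d \rho \cdot \wtl_{\alpha \ovxb}$; this is what matches $d \bigl( (- \rho)^{- 1} \wtl_{\alpha \ovxb} \bigr)$ after the structure equation for $d \wtl_{\alpha \ovxb}$ is inserted.
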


Let $\Phi$ be an $\Ad$-invariant polynomial on $\mathfrak{gl}(n + 1, \bbC)$
homogeneous of degree $m$ with $0 \leq m \leq n$.
Although $\Theta_{a} {}^{b}$ is not a $(1, 1)$-form in general,
$\Phi(\Theta)$ is a closed $(m, m)$-form near $M$~\cite{Marugame2021}*{Proposition 4.5(i)}.
If follows from \cref{lem:renormalized-connection} and \cref{eq:TW-curvature-form} that
$\Theta_{a} {}^{b}$ is written in terms of
$\wtR_{\alpha \ovxb \rho \ovxs}$, $ \wtA_{\alpha \beta}$,
$\wtna_{N} \wtA_{\alpha \beta}$, $\kappa$, $N \kappa$,
and their tangential covariant derivatives.
We derive from \cref{prop:normal-derivative-of-GL} the following

\begin{lemma}
\label{lem:normal-derivative-of-renormalized-curvature}
	For $0 \leq p \leq n - 1$,
	the boundary value of $\wtna_{N}^{p} \Phi(\Theta)$ is expressed by
	$R_{\alpha \ovxb \rho \ovxs}$, $A_{\alpha \beta}$,
	and their covariant derivatives.
\end{lemma}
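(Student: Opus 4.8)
The plan is to reduce everything to \cref{prop:normal-derivative-of-GL} by feeding in the explicit Graham-Lee description of $\Theta$, while carefully tracking the maximal order of normal differentiation that occurs. First I would polarize $\Phi$, writing it as a symmetric $\Ad$-invariant $m$-linear form $\widetilde{\Phi}$ so that $\Phi(\Theta) = \widetilde{\Phi}(\Theta, \dots, \Theta)$. Since $\wtna$ is a derivation that commutes with the contractions defining $\widetilde{\Phi}$, iterating the Leibniz rule expresses $\wtna_{N}^{p} \Phi(\Theta)$ as a universal finite sum of terms $\widetilde{\Phi}(\wtna_{N}^{q_{1}} \Theta, \dots, \wtna_{N}^{q_{m}} \Theta)$ with $q_{1} + \dots + q_{m} = p$. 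In particular, only the normal derivatives $\wtna_{N}^{q} \Theta$ with $q \leq p \leq n - 1$ enter, and \cref{eq:parallel-differential-forms} keeps the subsequent coframe computation within Graham-Lee data since $d \rho$, $\vartheta$, and $\wtxm$ are parallel.

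Next I would invoke \cref{lem:renormalized-connection} together with \cref{eq:TW-curvature-form}, which show that near $M$ each entry $\Theta_{a} {}^{b}$ is a universal expression in $\wtR_{\alpha \ovxb \rho \ovxs}$, $\wtA_{\alpha \beta}$, $\wtna_{N} \wtA_{\alpha \beta}$, $\kappa$, $N \kappa$, and their tangential covariant derivatives. Applying $\wtna_{N}^{q}$ and repeatedly commuting $\wtna_{N}$ past tangential covariant derivatives---exactly as in the proof of \cref{prop:normal-derivative-of-Laplacian}, each commutator contributing only the torsion and curvature of $\wtna$---I would write $\wtna_{N}^{q} \Theta_{a} {}^{b}$ as a universal expression in the normal derivatives $\wtna_{N}^{q'} \wtR_{\alpha \ovxb \rho \ovxs}$ with $q' \leq q$, $\wtna_{N}^{q'} \wtA_{\alpha \beta}$ with $q' \leq q + 1$, $N^{q'} \kappa$ with $q' \leq q + 1$, and their tangential covariant derivatives. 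The shift by one in the torsion and $\kappa$ orders reflects that $\Theta$ itself already carries $\wtna_{N} \wtA$ and $N \kappa$.

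Combining the two steps, the boundary value of $\wtna_{N}^{p} \Phi(\Theta)$ becomes a universal expression in the boundary values of $\wtna_{N}^{q'} \wtR_{\alpha \ovxb \rho \ovxs}$ with $q' \leq p$, of $\wtna_{N}^{q'} \wtA_{\alpha \beta}$ with $q' \leq p + 1$, and of $N^{q'} \kappa$ with $q' \leq p + 1$, together with their tangential covariant derivatives. Since $p \leq n - 1$, we have $p \leq n + 1$ for the curvature orders, $p + 1 \leq n + 1$ for the torsion orders, and $N^{p + 1} \kappa = N^{(p + 2) - 1} \kappa$ with $p + 2 \leq n + 1$ for the transverse-curvature orders, so every quantity lies within the range covered by \cref{prop:normal-derivative-of-GL}. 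That proposition then expresses each of them in terms of $R_{\alpha \ovxb \rho \ovxs}$, $A_{\alpha \beta}$, and their covariant derivatives with respect to $\theta$, which gives the claim.

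As for where the difficulty lies: the computation is routine once the Leibniz expansion is in place, and the only genuinely delicate point is the bookkeeping of the highest normal-derivative order. The bound $p \leq n - 1$---rather than $p \leq n$---is forced precisely because $\Theta$ contributes one extra normal derivative of the torsion and of $\kappa$, so that $\wtna_{N}^{p} \Phi(\Theta)$ reaches $\wtna_{N}^{p + 1} \wtA$ and $N^{p + 1} \kappa$, and the latter must remain inside the admissible range of \cref{prop:normal-derivative-of-GL}. I would therefore verify with care that no term secretly raises the order further; in particular, that commuting $\wtna_{N}$ past tangential covariant derivatives never increases the normal order beyond what the Leibniz count predicts.
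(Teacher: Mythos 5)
Your proposal is correct and takes essentially the same route as the paper: the paper's (brief) derivation likewise expresses $\Theta_{a} {}^{b}$ via \cref{lem:renormalized-connection} and \cref{eq:TW-curvature-form} in terms of $\wtR_{\alpha \ovxb \rho \ovxs}$, $\wtA_{\alpha \beta}$, $\wtna_{N} \wtA_{\alpha \beta}$, $\kappa$, $N \kappa$, and their tangential covariant derivatives, and then invokes \cref{prop:normal-derivative-of-GL}. Your polarization/Leibniz expansion and the order bookkeeping --- in particular identifying $N^{p + 1} \kappa$ as the binding constraint that forces $p \leq n - 1$ --- simply make explicit what the paper leaves implicit.
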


We also consider the change of a pseudo-Einstein contact form $\whxth = e^{\Upsilon} \theta$.
Since both $\theta$ and $\whxth$ are pseudo-Einstein,
$\Upsilon$ must be a CR pluriharmonic function.
Take its pluriharmonic extension $\wtxcu	$.
Then $\whxr = e^{\wtxcu} \rho$ is a Fefferman defining function associated with $\whxth$,
and $\Phi(\Theta)$ is invariant under this change~\cite{Marugame2021}*{Proposition 4.5(i)}.

\begin{remark}
	The function $\rho(z) = \abs{z}^{2} - 1$ on $\bbC^{n + 1}$
	is a Fefferman defining function of the unit ball.
	Moreover,
	the corresponding renormalized connection is flat.
	In particular,
	$\Phi(\Theta) = 0$ near the boundary when $1 \leq \deg \Phi \leq n$.
	The same is true for spherical CR manifolds by the above discussion.
	Hence $P_{\Phi}$, $\calP_{\Phi}$, $P_{\Phi}^{\prime}$, and $Q_{\Phi}^{\prime}$ defined in later sections
	are identically zero on spherical CR manifolds if $1 \leq \deg \Phi \leq n$.
\end{remark}

\section{$P_{\Phi}$-operator and $\calP_{\Phi}$-operator}
\label{section:P_Phi-operator-and-calP_Phi-operator}

In what follows,
let $\Phi$ be an $\Ad$-invariant polynomial on $\mathfrak{gl}(n + 1, \bbC)$
homogeneous of degree $m$ with $0 \leq m \leq n$.
It follows from \cref{prop:harmonic-extension} that,
for $f \in C^{\infty}(M)$,
there exists $\wtf \in C^{\infty}(\ovxco)$ such that
$\wtf|_{M} = f$ and $\Box_{+} \wtf = O(\rho^{n + 1})$.

\begin{proposition}
\label{prop:P-operator}
	Let $f \in C^{\infty}(M)$
	and take $\wtf \in C^{\infty}(\ovxco)$ such that $\wtf|_{M} = f$ and $\Box_{+} \wtf = O(\rho^{n + 1})$.
	Then there exist $A, B \in C^{\infty}(\ovxco)$ such that
	$A|_{M} = 0$ and
	\begin{equation}
		u
		\coloneqq A + B \rho^{n + 1} \log (- \rho)
	\end{equation}
	satisfies
	\begin{equation}
	\label{eq:def-of-P-operator}
		- d d^{c} \wtf \wedge \omega_{+}^{n - m} \wedge \Phi(\Theta) 
		= [\Box_{+} u + O(\rho^{n + 2} \log (- \rho))] \omega_{+}^{n + 1}.
	\end{equation}
	Moreover,
	$B$ is unique modulo $O(\rho)$,
	and $B|_{M}$ is determined only by $f$,
	$R_{\alpha \ovxb \rho \ovxs}$, $A_{\alpha \beta}$,
	and their covariant derivatives.
\end{proposition}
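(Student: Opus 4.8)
The plan is to reduce the statement to the Dirichlet problem solved in \cref{thm:solution-of-Dirichlet-problem}. Since $- d d^{c} \wtf \wedge \omega_{+}^{n - m} \wedge \Phi(\Theta)$ is an $(n + 1, n + 1)$-form and $\omega_{+}^{n + 1}$ is a nowhere-vanishing top-degree form near $M$, there is a unique function $\chi$, defined near $M$, with
\begin{equation}
	- d d^{c} \wtf \wedge \omega_{+}^{n - m} \wedge \Phi(\Theta)
	= \chi \, \omega_{+}^{n + 1}.
\end{equation}
Extending $\chi$ arbitrarily but smoothly to $\ovxco$, I would apply \cref{thm:solution-of-Dirichlet-problem} to produce $A, B \in C^{\infty}(\ovxco)$ with $A|_{M} = 0$ such that $u = A + B \rho^{n + 1} \log (- \rho)$ satisfies $\Box_{+} u = \chi + O(\rho^{n + 2} \log (- \rho))$; multiplying by $\omega_{+}^{n + 1}$ then gives \cref{eq:def-of-P-operator}, and the uniqueness of $B$ modulo $O(\rho)$ is inherited directly. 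The two hypotheses to be verified before invoking the theorem are that $\chi$ extends smoothly up to $M$ and that $\chi|_{M} = 0$; since the construction in \cref{thm:solution-of-Dirichlet-problem} depends only on boundary jets, the choice of interior extension of $\chi$ is immaterial.

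Both hypotheses follow from bookkeeping the powers of $(- \rho)$. By \cref{lem:renormalized-connection} the curvature $\Theta_{a} {}^{b}$, hence $\Phi(\Theta)$, is smooth up to $M$, as is $d d^{c} \wtf$; using $\omega_{+} = \frac{1 - \kappa \rho}{(- \rho)^{2}} d \rho \wedge \vartheta + \frac{1}{- \rho} \wtxm$, I would write the numerator as a smooth coefficient times $d \rho \wedge \vartheta \wedge \wtxm^{n}$. Because $(d \rho \wedge \vartheta)^{2} = 0$, at most one factor of $\omega_{+}^{n - m}$ can supply the transverse two-form $d \rho \wedge \vartheta$, so the most singular contribution has $\omega_{+}^{n - m}$ producing one transverse factor ($\sim (- \rho)^{-2}$) and $n - m - 1$ tangential factors ($\sim (- \rho)^{-1}$), giving a numerator coefficient of order $(- \rho)^{-(n - m + 1)}$. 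Dividing by the coefficient $\frac{(n + 1)(1 - \kappa \rho)}{(- \rho)^{n + 2}}$ of $\omega_{+}^{n + 1}$ shows $\chi = O(\rho^{m + 1})$; in particular $\chi$ is smooth and, as $m \geq 0$, satisfies $\chi|_{M} = 0$.

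For the final assertion that $B|_{M}$ depends only on $f$ and the Tanaka-Webster curvature and torsion, I would substitute the formula for $\chi$ into the conclusion of \cref{thm:solution-of-Dirichlet-problem}, which expresses $B|_{M}$ through $((N^{l} \chi)|_{M})_{l = 1}^{n + 1}$, $R_{\alpha \ovxb \rho \ovxs}$, $A_{\alpha \beta}$, and their covariant derivatives. The boundary jets of the ingredients are controlled by earlier results: those of $\Phi(\Theta)$ up to order $n - 1$ by \cref{lem:normal-derivative-of-renormalized-curvature}, those of $\kappa$ and the Graham-Lee data by \cref{prop:normal-derivative-of-GL}, and $((N^{p} \wtf)|_{M})_{p = 0}^{n}$ by \cref{prop:harmonic-extension}; one checks that the order of $\Phi(\Theta)$ actually needed is $n - m \leq n - 1$, matching the range of \cref{lem:normal-derivative-of-renormalized-curvature}. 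The delicate point—and the step I expect to be the main obstacle—is that $(N^{l} \chi)|_{M}$ for $l \leq n + 1$ must not depend on the undetermined top jet $N^{n + 1} \wtf|_{M}$ (the ambiguity of $\wtf$ modulo $O(\rho^{n + 1})$). I would settle this by noting that replacing $\wtf$ with $\wtf + w \rho^{n + 1}$ alters the numerator only at order $(- \rho)^{m - 1}$, hence alters $\chi$ only at order $\rho^{n + m + 1}$: for $m \geq 1$ this is already $O(\rho^{n + 2})$, while for $m = 0$ the borderline order $\rho^{n + 1}$ is annihilated by the critical cancellation $[N^{n + 1} \Box_{+}(w \rho^{n + 1})]|_{M} = 0$, which comes from the vanishing factor $(n + 1 - m)$ at $m = n + 1$ in \cref{prop:normal-derivative-of-Laplacian}. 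This confirms that all relevant jets reduce to $f$ and the curvature and torsion, completing the argument.
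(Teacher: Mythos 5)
Your proposal is correct, and it takes a genuinely different route from the paper at the problematic values of $m$. For $2 \leq m \leq n$ the paper's proof is exactly your reduction: write the left-hand side as $\chi\,\omega_{+}^{n+1}$, verify $\chi|_{M} = 0$ by the same power counting, check that $((N^{p}\chi)|_{M})_{p=1}^{n+1}$ are determined by boundary data, and invoke \cref{thm:solution-of-Dirichlet-problem}. But the paper does \emph{not} run this argument for $m = 0$ or $m = 1$: there it switches to special arguments --- for $m = 0$ it quotes \cref{prop:harmonic-extension} directly, giving $B|_{M} = c\,((n+1)!)^{-2} P f$, and for $m = 1$ it uses the trace identity \eqref{eq:trace-of-renormalized-curvature} to see that the left-hand side is already $O(\rho^{n+2})\,\omega_{+}^{n+1}$, so that $A = B = 0$ works and $P_{\Phi} \equiv 0$. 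The reason for this case split is precisely the obstacle you isolated: for $m \leq 1$ the naive jet count lets $(N^{n+1}\chi)|_{M}$ involve the undetermined jet $(N^{n+1}\wtf)|_{M}$ (through the $d d^{c}\wtf(N,\wtT)$ component), so the generic argument cannot be quoted verbatim there. Your perturbation argument --- replacing $\wtf$ by $\wtf + w\rho^{n+1}$ changes $\chi$ only by $O(\rho^{n+m+1})$, with the borderline $m = 0$ coefficient killed by the factor $n+1-m$ evaluated at $m = n+1$ in \cref{prop:normal-derivative-of-Laplacian} --- is a valid substitute that restores uniformity; I verified both the power counting and the $m = 0$ cancellation. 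Two small points to tighten: the general ambiguity of $\wtf$ is $w\rho^{n+1}$ with $N w$ not necessarily zero, so you should reduce to the case $N w = 0$ near $M$ modulo an $O(\rho^{n+2})$ error, which $\Box_{+}$ maps to $O(\rho^{n+2})$; and your claim that only jets of $\Phi(\Theta)$ up to order $n - m \leq n - 1$ are needed presupposes $m \geq 1$, the case $m = 0$ being harmless only because $\Phi(\Theta)$ is then a constant. What the paper's case split buys is the explicit identifications of $P_{\Phi}$ at $m \in \{0,1\}$, which it reuses later; what your route buys is a single argument valid for all $m$ that also makes transparent why $B|_{M}$ is independent of the choice of $\wtf$.
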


\begin{definition}
\label{def:P_Phi-operator}
	The \emph{$P_{\Phi}$-operator} $P_{\Phi}$ is defined by
	$P_{\Phi} f \coloneqq B|_{M}$.
\end{definition}

\begin{proof}[Proof of \cref{prop:P-operator}]
	We divide into three cases:
	$m = 0$, $m = 1$, and $2 \leq m \leq n$.

	If $m = 0$,
	then $\Phi$ is a constant $c \in \bbC$.
	In this case,
	\begin{equation}
		- (n + 1) d d^{c} \wtf \wedge \omega_{+}^{n} \wedge \Phi(\Theta)
		= \Box_{+} (c \wtf) \omega_{+}^{n + 1}.
	\end{equation}
	Take $\wtg$ in \cref{prop:harmonic-extension}.
	Then $A = 0$ and $B = - c \wtg / (n + 1)$ is a solution of \cref{eq:def-of-P-operator}.
	In particular,
	\begin{equation}
		B|_{M}
		= - \frac{c}{n + 1} \wtg|_{M}
		= \frac{c}{((n + 1)!)^{2}} P f.
	\end{equation}

	If $m = 1$,
	then $\Phi(X_{a} {}^{b}) = c X_{a} {}^{a}$ for some $c \in \bbC$.
	It follows from \cref{eq:trace-of-renormalized-curvature} that
	\begin{equation}
		- d d^{c} \wtf \wedge \omega_{+}^{n - 1} \wedge \Phi(\Theta)
		= O(\rho^{n + 2}) \omega_{+}^{n + 1},
	\end{equation}
	and so $A = B = 0$ is a solution of \cref{eq:def-of-P-operator}.
	\cref{thm:solution-of-Dirichlet-problem} implies the uniqueness of $B$ modulo $O(\rho)$.
	In particular,
	$B|_{M}$ must be zero.

	In the remainder of the proof,
	we assume $2 \leq m \leq n$.
	We first note that
	\begin{gather}
		\omega_{+}^{n + 1}
		= (n + 1) \frac{1 - \kappa \rho}{(- \rho)^{n + 2}} d \rho \wedge \vartheta \wedge \wtxm^{n}, \\
		\omega_{+}^{n - m}
		= (n - m) \frac{1 - \kappa \rho}{(- \rho)^{n - m + 1}} d \rho \wedge \vartheta \wedge \wtxm^{n - m - 1}
			+ \frac{1}{(- \rho)^{n - m}} \wtxm^{n - m}.
	\end{gather}
	This implies that there exists $\chi \in C^{\infty}(\ovxco)$ such that
	\begin{equation}
		- d d^{c} \wtf \wedge \omega_{+}^{n - m} \wedge \Phi(\Theta)
		= \chi \omega_{+}^{n + 1}.
	\end{equation}
	By \cref{thm:solution-of-Dirichlet-problem},
	it suffices to show that $\chi|_{M} = 0$
	and $((N^{p} \chi)|_{M})_{p = 1}^{n + 1}$ are expressed by
	$f$, $R_{\alpha \ovxb \rho \ovxs}$, $A_{\alpha \beta}$,
	and their covariant derivatives.
	We derive from the definition of $\chi$ that
	\begin{align}
		&\chi d \rho \wedge \vartheta \wedge \wtxm^{n} \\
		&= - \frac{n - m}{n + 1} d \rho \wedge \vartheta \wedge \wtxm^{n - m - 1}
			\wedge ((- \rho)^{m + 1} d d^{c} \wtf \wedge \Phi(\Theta)) \\
		&\quad - \frac{1}{n + 1} \wtxm^{n - m}
			\wedge ((- \rho)^{m + 2} (1 - \kappa \rho)^{- 1} d d^{c} \wtf \wedge \Phi(\Theta)).
	\end{align}
	Since the right hand side is equal to zero on $M$,
	we have $\chi|_{M} = 0$.
	We derive from \cref{eq:parallel-differential-forms} that
	\begin{align}
		&(N^{p} \chi)|_{M} (d \rho \wedge \vartheta \wedge \wtxm^{n})|_{M} \\
		&= - \frac{n - m}{n + 1} \sbra{d \rho \wedge \vartheta \wedge \wtxm^{n - m - 1}
			\wedge \wtna_{N}^{p} ((- \rho)^{m + 1} d d^{c} \wtf \wedge \Phi(\Theta))}|_{M} \\
		&\quad - \frac{1}{n + 1} \sbra{\wtxm^{n - m} \wedge \wtna_{N}^{p}((- \rho)^{m + 2} (1 - \kappa \rho)^{- 1}
			d d^{c} \wtf \wedge \Phi(\Theta))}|_{M}.
	\end{align}
	It follows from \cref{prop:harmonic-extension,prop:normal-derivative-of-GL} that
	the right hand side is written in terms of $f$,
	$R_{\alpha \ovxb \rho \ovxs}$, $A_{\alpha \beta}$,
	and their covariant derivatives if $1 \leq p \leq n + 1$.
	Therefore $((N^{p} \chi)|_{M})_{p = 1}^{n + 1}$ are expressed by
	$f$, $R_{\alpha \ovxb \rho \ovxs}$, $A_{\alpha \beta}$,
	and their covariant derivatives.
\end{proof}

Similar to the critical CR GJMS operator,
the $P_{\Phi}$-operator transforms as follows under conformal change.

\begin{proposition}
\label{prop:transformation-law-of-P-operator}
	Let $\whxth = e^{\Upsilon} \theta$ be another pseudo-Einstein contact form.
	Then $e^{(n + 1) \Upsilon} \whP_{\Phi} = P_{\Phi}$,
	where $\whP_{\Phi}$ is defined in terms of $\whxth$.
\end{proposition}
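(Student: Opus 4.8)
The plan is to observe that, under $\whxth = e^{\Upsilon}\theta$, every ingredient in \cref{def:P_Phi-operator} except the explicit logarithmic factor is left unchanged, and that the rescaling of that factor produces precisely the weight $e^{(n + 1)\Upsilon}$. So the transformation law should follow from the uniqueness clause of \cref{prop:P-operator} alone, without any computation of $P_{\Phi}$ itself.

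First I would collect the relevant invariances. Since $\Upsilon \in \scrP$, its pluriharmonic extension $\wtxcu$ satisfies $d d^{c}\wtxcu = 0$, so from $\whxr = e^{\wtxcu}\rho$ and $\log(-\whxr) = \log(-\rho) + \wtxcu$ we get
\[
	\widehat{\omega}_{+}
	= - d d^{c}\log(-\whxr)
	= \omega_{+}.
\]
As $\Box_{+}$ depends only on $\omega_{+}$, this yields $\widehat{\Box}_{+} = \Box_{+}$; moreover $\Phi(\Theta)$ is invariant by \cite{Marugame2021}*{Proposition 4.5(i)}. Because $e^{\wtxcu}$ is positive and smooth up to $M$, one also has $O(\whxr^{k}) = O(\rho^{k})$ and $O(\whxr^{k}\log(-\whxr)) = O(\rho^{k}\log(-\rho))$ for every $k$. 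Consequently, a harmonic extension $\wtf$ of $f$ with $\wtf|_{M} = f$ and $\Box_{+}\wtf = O(\rho^{n + 1})$ is simultaneously a harmonic extension for the $\whxth$-problem, so the left-hand side $- d d^{c}\wtf \wedge \omega_{+}^{n - m}\wedge\Phi(\Theta)$ of \cref{eq:def-of-P-operator} is literally the same form in both pictures.

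The heart of the argument is the comparison of the logarithmic terms. Writing $\widehat{u} = \widehat{A} + \widehat{B}\,\whxr^{n + 1}\log(-\whxr)$ for the solution that defines $\whP_{\Phi}f = \widehat{B}|_{M}$, I would expand
\[
	\whxr^{n + 1}\log(-\whxr)
	= e^{(n + 1)\wtxcu}\rho^{n + 1}\log(-\rho)
		+ e^{(n + 1)\wtxcu}\wtxcu\,\rho^{n + 1},
\]
so that $\widehat{u} = A^{\prime} + B^{\prime}\rho^{n + 1}\log(-\rho)$ with $A^{\prime} \coloneqq \widehat{A} + e^{(n + 1)\wtxcu}\wtxcu\,\widehat{B}\,\rho^{n + 1}$ and $B^{\prime} \coloneqq e^{(n + 1)\wtxcu}\widehat{B}$. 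Here $A^{\prime}|_{M} = 0$ and both $A^{\prime}, B^{\prime}$ are smooth up to $M$; and after the substitutions $\widehat{\omega}_{+} = \omega_{+}$, $\Phi(\widehat{\Theta}) = \Phi(\Theta)$, $\widehat{\Box}_{+} = \Box_{+}$ the hatted form of \cref{eq:def-of-P-operator} (with error $O(\whxr^{n + 2}\log(-\whxr)) = O(\rho^{n + 2}\log(-\rho))$) becomes exactly the unhatted one. Thus $(A^{\prime}, B^{\prime})$ solves the $\theta$-problem for $f$, and the uniqueness of $B$ modulo $O(\rho)$ in \cref{prop:P-operator} forces $B^{\prime}|_{M} = P_{\Phi}f$. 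Since $\wtxcu|_{M} = \Upsilon$, this reads
\[
	e^{(n + 1)\Upsilon}\whP_{\Phi}f
	= e^{(n + 1)\Upsilon}\widehat{B}|_{M}
	= B^{\prime}|_{M}
	= P_{\Phi}f,
\]
which is the desired identity.

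The only delicate point — and the step I expect to require the most care — is the bookkeeping in the last paragraph: confirming that the non-logarithmic remainder $e^{(n + 1)\wtxcu}\wtxcu\,\widehat{B}\,\rho^{n + 1}$ is genuinely absorbed into the $A$-part without altering the boundary value of the coefficient of $\rho^{n + 1}\log(-\rho)$, and that the two families of error terms truly coincide. Once these are checked, the transformation law is immediate from the uniqueness clause of \cref{prop:P-operator}, with no further computation. I note that this argument is uniform in $m$, so it also recovers the known invariance in the cases $m = 0$ (the critical CR GJMS operator) and $m = 1$ (where $P_{\Phi} = 0$) without separate treatment.
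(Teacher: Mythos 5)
Your proposal is correct and is essentially the paper's own proof: both take the pluriharmonic extension $\wtxcu$ of $\Upsilon$, use the invariance of $\omega_{+}$, $\Box_{+}$, and $\Phi(\Theta)$ under $\whxr = e^{\wtxcu}\rho$, expand $\whxr^{n+1}\log(-\whxr)$ to absorb the non-logarithmic term into the $A$-part, and conclude $e^{(n+1)\Upsilon}\whP_{\Phi} f = (e^{(n+1)\wtxcu}\whB)|_{M} = P_{\Phi} f$ from the uniqueness clause of \cref{prop:P-operator}. Your write-up simply makes explicit the invariance checks and bookkeeping that the paper states in one line.
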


\begin{proof}
	Since both $\theta$ and $\whxth$ are pseudo-Einstein,
	$\Upsilon$ is a CR pluriharmonic function.
	Take its pluriharmonic extension $\wtxcu$.
	Then $\whxr = e^{\wtxcu} \rho$ is a Fefferman defining function associated with $\whxth$.
	Let $\whu = \whA + \whB \whxr^{n + 1} \log (- \whxr)$ be a solution
	of \cref{eq:def-of-P-operator} with respect to $\whxr$.
	Since $\omega_{+}$ and $\Phi(\Theta)$ are invariant under this change,
	\begin{equation}
		\whu
		= \whA + e^{(n + 1) \wtxcu} \whB \wtxcu \rho^{n + 1}
			+ e^{(n + 1) \wtxcu} \whB \rho^{n + 1} \log (- \rho)
	\end{equation}
	satisfies the equation
	\begin{equation}
		- d d^{c} \wtf \wedge \omega_{+}^{n - m + 1} \wedge \Phi(\Theta)
		= \rbra{\Box_{+} \whu + O(\rho^{n + 2} \log (- \rho))} \omega_{+}^{n + 1}.
	\end{equation}
	We derive from \cref{prop:P-operator} that
	$e^{(n + 1) \Upsilon} \whP_{\Phi} f = (e^{(n + 1) \wtxcu} \whB)|_{M} = P_{\Phi} f$.
\end{proof}

Moreover,
the $P_{\Phi}$-operator is symmetric.
Our proof is inspired by that of \cite{Marugame2018}*{Theorem 1.2}.
Before the proof,
we note that
\begin{equation}
	\lp \int_{\rho < - \varepsilon} O(\rho^{n + 2} \log (- \rho)) \omega_{+}^{n + 1}
	= 0
\end{equation}
since the integrand is integrable on the whole $\Omega$.

\begin{proposition}
\label{prop:symmetry-of-P-operatior}
	For any $f_{1}, f_{2} \in C^{\infty}(M)$,
	one has
	\begin{equation}
		\int_{M} f_{1} (P_{\Phi} f_{2}) \, \theta \wedge (d \theta)^{n}
		= \int_{M} f_{2} (P_{\Phi} f_{1}) \, \theta \wedge (d \theta)^{n}.
	\end{equation}
\end{proposition}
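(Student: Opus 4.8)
The plan is to express both sides as the logarithmic part of a single bulk integral and then show that this integral is symmetric in $f_{1}$ and $f_{2}$. For $i = 1, 2$, let $\widetilde{f}_{i} \in C^{\infty}(\ovxco)$ be a harmonic extension of $f_{i}$ in the sense of \cref{prop:harmonic-extension}, so that $\Box_{+} \widetilde{f}_{i} = O(\rho^{n + 1})$. Fix a collar $U = \{- \delta < \rho < 0\}$ of $M$ on which $\Phi(\Theta)$ is defined and
\[
	\Psi \coloneqq \omega_{+}^{n - m} \wedge \Phi(\Theta)
\]
is a closed $(n, n)$-form (recall $\omega_{+}$ is \Kahler and $\Phi(\Theta)$ is a closed $(m, m)$-form there). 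Set
\begin{equation}
	I_{i j}(\varepsilon)
	\coloneqq - \int_{- \delta < \rho < - \varepsilon} \widetilde{f}_{i} \, d d^{c} \widetilde{f}_{j} \wedge \Psi,
\end{equation}
whose integrand expands in powers of $\rho$ and $\rho \log(- \rho)$, so that $\lp I_{i j}(\varepsilon)$ is well defined.

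First I would identify $\lp I_{i j}(\varepsilon)$ with the desired pairing. Using the defining relation of \cref{prop:P-operator}, $- d d^{c} \widetilde{f}_{j} \wedge \Psi = [\Box_{+} u_{j} + O(\rho^{n + 2} \log(- \rho))] \omega_{+}^{n + 1}$ with $u_{j} = A_{j} + B_{j} \rho^{n + 1} \log(- \rho)$, $A_{j}|_{M} = 0$, and $B_{j}|_{M} = P_{\Phi} f_{j}$, and discarding the $O(\rho^{n + 2} \log(- \rho))$ term (integrable, hence with vanishing logarithmic part), the problem reduces to computing $\lp \int_{- \delta < \rho < - \varepsilon} \widetilde{f}_{i} \, \Box_{+} u_{j} \, \omega_{+}^{n + 1}$. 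I would then write $\Box_{+} u_{j} \, \omega_{+}^{n + 1} = - (n + 1) d d^{c} u_{j} \wedge \omega_{+}^{n}$ and integrate by parts twice to move both derivatives onto $\widetilde{f}_{i}$; here the key structural fact is that $d \widetilde{f}_{i} \wedge d^{c} u_{j} - d u_{j} \wedge d^{c} \widetilde{f}_{i}$ is of type $(2, 0) + (0, 2)$ and hence wedges to zero against the $(n, n)$-form $\omega_{+}^{n}$ in complex dimension $n + 1$. This produces an interior integral $\int u_{j} \Box_{+} \widetilde{f}_{i} \, \omega_{+}^{n + 1}$ with integrable integrand (as $\Box_{+} \widetilde{f}_{i} = O(\rho^{n + 1})$ and $u_{j} = O(\rho)$), whose logarithmic part vanishes, together with two boundary integrals over $\{\rho = - \varepsilon\}$. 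Only the factor $B_{j} \rho^{n + 1} \log(- \rho)$ of $u_{j}$ contributes a genuine $\log \varepsilon$, through $d^{c} \log(- \rho) = \vartheta / \rho$; a direct computation using that the pullback of $\omega_{+}^{n}$ to $\{\rho = - \varepsilon\}$ is $\varepsilon^{- n} \wtxm^{n}$ and $\vartheta \wedge \wtxm^{n}|_{M} = \theta \wedge (d \theta)^{n}$ then gives
\begin{equation}
	\lp I_{i j}(\varepsilon)
	= c \int_{M} f_{i} (P_{\Phi} f_{j}) \, \theta \wedge (d \theta)^{n},
\end{equation}
with a nonzero constant $c = (- 1)^{n} (n + 1)^{2}$ that is independent of $f_{i}$, $f_{j}$, and of the order of the indices.

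It then remains to show that $\lp I_{i j}(\varepsilon)$ is symmetric in $i$ and $j$. For this I would study the difference
\begin{equation}
	I_{1 2}(\varepsilon) - I_{2 1}(\varepsilon)
	= - \int_{- \delta < \rho < - \varepsilon} (\widetilde{f}_{1} \, d d^{c} \widetilde{f}_{2} - \widetilde{f}_{2} \, d d^{c} \widetilde{f}_{1}) \wedge \Psi.
\end{equation}
Since $\widetilde{f}_{1} \, d d^{c} \widetilde{f}_{2} - \widetilde{f}_{2} \, d d^{c} \widetilde{f}_{1} = d(\widetilde{f}_{1} \, d^{c} \widetilde{f}_{2} - \widetilde{f}_{2} \, d^{c} \widetilde{f}_{1})$ modulo a $(2, 0) + (0, 2)$-form that again annihilates $\Psi$, and $\Psi$ is closed on $U$, Stokes' theorem on $\{- \delta < \rho < - \varepsilon\}$ (the contribution from $\{\rho = - \delta\}$ being independent of $\varepsilon$) rewrites this difference, up to an $\varepsilon$-independent constant, as the boundary integral $- \int_{\rho = - \varepsilon} (\widetilde{f}_{1} \, d^{c} \widetilde{f}_{2} - \widetilde{f}_{2} \, d^{c} \widetilde{f}_{1}) \wedge \Psi$. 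Its integrand is smooth up to $M$---the extensions $\widetilde{f}_{i}$, the coefficients of $\omega_{+}$ in terms of $\vartheta$, $\wtxm$, $\kappa$, and $\Phi(\Theta)$ are all smooth in $\rho$ with no $\log(- \rho)$ present---so the boundary integral expands in integer powers of $\varepsilon$ alone and carries no $\log \varepsilon$ term. Hence $\lp(I_{1 2} - I_{2 1}) = 0$, and combining with the previous paragraph yields $c \int_{M} f_{1} (P_{\Phi} f_{2}) \, \theta \wedge (d \theta)^{n} = c \int_{M} f_{2} (P_{\Phi} f_{1}) \, \theta \wedge (d \theta)^{n}$, which is the claim.

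The main obstacle is the bookkeeping in the identification step: one must verify that neither the $A_{j}$-part of $u_{j}$ nor the subleading terms of $\omega_{+}^{n}$ and of $d^{c} u_{j}$ on $\{\rho = - \varepsilon\}$ contribute any $\log \varepsilon$, so that the constant $c$ is both correctly computed and nonzero. Everything else rests on the two robust principles used above---that $(2, 0)$- and $(0, 2)$-forms die against $(n, n)$-forms in complex dimension $n + 1$, and that smooth, non-logarithmic data on the level sets contribute only integer powers of $\varepsilon$ to the relevant integrals.
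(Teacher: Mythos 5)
Your argument is correct, and it rests on the same two pillars as the paper's proof: the pointwise vanishing of $(2,0)+(0,2)$-forms against $(n,n)$-forms in complex dimension $n+1$, and the fact that log-free data on the level sets $\{\rho = -\varepsilon\}$ produce expansions in integer powers of $\varepsilon$ only. Where you genuinely differ is in how the symmetry is produced. The paper introduces a three-term functional, namely the sum of $(n+1)\lp\int_{\rho<-\varepsilon} d\wtf_{1}\wedge d^{c}u_{2}\wedge\omega_{+}^{n}$, $(n+1)\lp\int_{\rho<-\varepsilon} du_{1}\wedge d^{c}\wtf_{2}\wedge\omega_{+}^{n}$, and $-\lp\int_{\rho<-\varepsilon} d\wtf_{1}\wedge d^{c}\wtf_{2}\wedge\omega_{+}^{n-m}\wedge\Phi(\Theta)$, which is \emph{manifestly} symmetric because $df\wedge d^{c}g\wedge\Xi = dg\wedge d^{c}f\wedge\Xi$ for any $(n,n)$-form $\Xi$; it then evaluates this functional to be $(-1)^{n}(n+1)^{2}\int_{M}f_{1}(P_{\Phi}f_{2})\,\theta\wedge(d\theta)^{n}$ by three separate integrations by parts. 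You instead take the single bulk integral $I_{ij} = -\int\wtf_{i}\,dd^{c}\wtf_{j}\wedge\Psi$ with $\Psi = \omega_{+}^{n-m}\wedge\Phi(\Theta)$, identify its logarithmic part with the pairing by a Green-type double integration by parts (this portion reproduces the paper's first- and second-term computations, glued together by the type identity), and then prove symmetry of $\lp I_{ij}$ \emph{a posteriori} by applying Stokes to $d(\wtf_{1}d^{c}\wtf_{2}-\wtf_{2}d^{c}\wtf_{1})\wedge\Psi$, using that $\Psi$ is closed and that the resulting boundary integrand carries no logarithms. Your symmetrization never mentions the auxiliary solutions $u_{i}$, which is a gain in transparency; the paper's version buys manifest symmetry at the outset, so no separate Stokes argument is needed.

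Two slips, neither of which affects validity. First, the constant is $c = (-1)^{n+1}(n+1)^{2}$, not $(-1)^{n}(n+1)^{2}$: with the paper's conventions,
\begin{equation}
	\lp I_{ij}
	= \lp\int \wtf_{i}\,(\Box_{+}u_{j})\,\omega_{+}^{n+1}
	= -(n+1)\,\lp\int \wtf_{i}\,dd^{c}u_{j}\wedge\omega_{+}^{n},
\end{equation}
and the boundary term contributes $(n+1)(-1)^{n}\int_{M}f_{i}(P_{\Phi}f_{j})\,\theta\wedge(d\theta)^{n}$ \emph{before} the outer factor $-(n+1)$ is applied. Since your argument only uses that $c$ is nonzero and independent of the functions and of the order of the indices, nothing breaks. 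Second, the genuine $\log\varepsilon$ in $\int_{\rho=-\varepsilon}\wtf_{i}\,d^{c}u_{j}\wedge\omega_{+}^{n}$ arises from the term $(n+1)B_{j}\rho^{n}\log(-\rho)\,d^{c}\rho$, i.e.\ from $d^{c}$ falling on $B_{j}\rho^{n+1}$ with the factor $\log(-\rho)$ surviving; the term you cite, coming from $d^{c}\log(-\rho) = \vartheta/\rho$, is the log-free contribution $B_{j}\rho^{n}\vartheta$. The conclusion you draw from it (only the $B_{j}\rho^{n+1}\log(-\rho)$ part of $u_{j}$ matters) is nevertheless the right one.
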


\begin{proof}
	Take $\wtf_{i} \in C^{\infty}(\ovxco)$ such that
	$\wtf_{i}|_{M} = f_{i}$ and $\Box_{+} \wtf_{i} = O(\rho^{n + 1})$.
	Then there exist $A_{i}, B_{i} \in C^{\infty}(\ovxco)$ such that
	$A_{i}|_{M} = 0$ and
	\begin{equation}
		u_{i}
		\coloneqq A_{i} + B_{i} \rho^{n + 1} \log (- \rho)
	\end{equation}
	satisfies
	\begin{equation}
		d d^{c} \wtf_{i} \wedge \omega_{+}^{n - m} \wedge \Phi(\Theta)
		= (n + 1) d d^{c} u_{i} \wedge \omega_{+}^{n}
			+ O(\rho^{n + 2} \log (- \rho)) \omega_{+}^{n + 1}.
	\end{equation}
	We have $B_{i}|_{M} = P_{\Phi} f_{i}$ by the definition of $P_{\Phi}$.
	Set
	\begin{align}
		I
		&\coloneqq (n + 1) \lp \int_{\rho < - \varepsilon} d \wtf_{1} \wedge d^{c} u_{2} \wedge \omega_{+}^{n} \\
		&\quad + (n + 1) \lp \int_{\rho < - \varepsilon} d u_{1} \wedge d^{c} \wtf_{2} \wedge \omega_{+}^{n} \\
		&\quad - \lp \int_{\rho < - \varepsilon} d \wtf_{1} \wedge d^{c} \wtf_{2}
			\wedge \omega_{+}^{n - m} \wedge \Phi(\Theta).
	\end{align}
	Since $\Phi(\Theta)$ is an $(m, m)$-form near $M$,
	this $I$ is symmetric in the indices $1$ and $2$.
	We would like to compute this $I$.

	On the one hand,
	\begin{align}
		&(n + 1) \lp \int_{\rho < - \varepsilon} d \wtf_{1} \wedge d^{c} u_{2} \wedge \omega_{+}^{n} \\
		&= (n + 1) \lp \int_{\rho < - \varepsilon}
			d \rbra{\wtf_{1} d^{c} u_{2} \wedge \omega_{+}^{n}}
			- (n + 1) \lp \int_{\rho < - \varepsilon} \wtf_{1} d d^{c} u_{2} \wedge \omega_{+}^{n} \\
		&= (n + 1) \lp \int_{\rho = - \varepsilon}
			\wtf_{1} d^{c} \rbra{A_{2} + B_{2} \rho^{n + 1} \log (- \rho)}
			\wedge (\varepsilon^{- 1} d \vartheta)^{n} \\
		&\quad + \lp \int_{\rho < - \varepsilon} \wtf_{1} (\Box_{+} u_{2}) \omega_{+}^{n + 1} \\
		&= (- 1)^{n} (n + 1)^{2} \int_{M} f_{1} (P_{\Phi} f_{2}) \, \theta \wedge (d \theta)^{n}
			- \lp \int_{\rho < - \varepsilon} \wtf_{1} d d^{c} \wtf_{2}
			\wedge \omega_{+}^{n - m} \wedge \Phi(\Theta),
	\end{align}
	and
	\begin{align}
		&- \lp \int_{\rho < - \varepsilon} d \wtf_{1} \wedge d^{c} \wtf_{2}
			\wedge \omega_{+}^{n - m} \wedge \Phi(\Theta) \\
		&= - \lp \int_{\rho < - \varepsilon} d \sbra{\wtf_{1} d^{c} \wtf_{2}
			\wedge \omega_{+}^{n - m} \wedge \Phi(\Theta)}
			+ \lp \int_{\rho < - \varepsilon} \wtf_{1} d d^{c} \wtf_{2}
			\wedge \omega_{+}^{n - m} \wedge \Phi(\Theta) \\
		&= - \lp \int_{\rho = - \varepsilon} \wtf_{1} d^{c} \wtf_{2}
			\wedge (\varepsilon^{- 1} d \vartheta)^{n - m} \wedge \Phi(\Theta)
			+ \lp \int_{\rho < - \varepsilon} \wtf_{1} d d^{c} \wtf_{2}
			\wedge \omega_{+}^{n - m} \wedge \Phi(\Theta) \\
		&= \lp \int_{\rho < - \varepsilon} \wtf_{1} d d^{c} \wtf_{2}
			\wedge \omega_{+}^{n - m} \wedge \Phi(\Theta).
	\end{align}
	Hence
	\begin{align}
		&(n + 1) \lp \int_{\rho < - \varepsilon} d \wtf_{1} \wedge d^{c} u_{2} \wedge \omega_{+}^{n}
			- \lp \int_{\rho < - \varepsilon} d \wtf_{1} \wedge d^{c} \wtf_{2}
			\wedge \omega_{+}^{n - m} \wedge \Phi(\Theta) \\
		&= (- 1)^{n} (n + 1)^{2} \int_{M} f_{1} (P_{\Phi} f_{2}) \, \theta \wedge (d \theta)^{n}.
	\end{align}
	On the other hand,
	\begin{align}
		&(n + 1) \lp \int_{\rho < - \varepsilon} d u_{1} \wedge d^{c} \wtf_{2} \wedge \omega_{+}^{n} \\
		&= (n + 1) \lp \int_{\rho < - \varepsilon} d \rbra{u_{1} d^{c} \wtf_{2} \wedge \omega_{+}^{n}}
			- (n + 1) \lp \int_{\rho < - \varepsilon} u_{1} d d^{c} \wtf_{2} \wedge \omega_{+}^{n} \\
		&= (n + 1) \lp \int_{\rho = - \varepsilon} (A_{1} + B_{1} (- \varepsilon)^{n + 1} \log \varepsilon)
			d^{c} \wtf_{2} \wedge (\varepsilon^{- 1} d \vartheta)^{n} \\
		& \quad + \lp \int_{\rho < - \varepsilon} u_{1} (\Box_{+} \wtf_{2}) \omega_{+}^{n + 1} \\
		&= \lp \int_{\rho < - \varepsilon} u_{1} (\Box_{+} \wtf_{2}) \omega_{+}^{n + 1}.
	\end{align}
	Since $u_{1}|_{M} = 0$ and $\Box_{+} \wtf_{2} = O(\rho^{n + 1})$,
	the $(n + 1, n + 1)$-form $u_{1} (\Box_{+} \wtf_{2}) \omega_{+}^{n + 1}$ is continuous up to the boundary.
	Thus we have
	\begin{equation}
		(n + 1) \lp \int_{\rho < - \varepsilon} d u_{1} \wedge d^{c} \wtf_{2} \wedge \omega_{+}^{n}
		= 0.
	\end{equation}
	Therefore
	\begin{equation}
		I
		= (- 1)^{n} (n + 1)^{2} \int_{M} f_{1} (P_{\Phi} f_{2}) \, \theta \wedge (d \theta)^{n}.
	\end{equation}
	Since $I$ is symmetric in the indices $1$ and $2$,
	\begin{equation}
		\int_{M} f_{1} (P_{\Phi} f_{2}) \, \theta \wedge (d \theta)^{n}
		= \frac{(- 1)^{n}}{(n + 1)^{2}} I
		= \int_{M} f_{2} (P_{\Phi} f_{1}) \, \theta \wedge (d \theta)^{n},
	\end{equation}
	which completes the proof.
\end{proof}

An argument similar to that in the proof of \cref{prop:P-operator}
gives a symmetric bilinear differential operator $\calP_{\Phi}$.

\begin{proposition}
\label{prop:calP-operator}
	Let $f_{1}, f_{2} \in C^{\infty}(M)$
	and take $\wtf_{i} \in C^{\infty}(\ovxco)$ such that
	$\wtf_{i}|_{M} = f_{i}$ and $\Box_{+} \wtf_{i} = O(\rho^{n + 1})$.
	Then there exist $\calA, \calB \in C^{\infty}(\ovxco)$ such that
	$\calA|_{M} = 0$ and
	\begin{equation}
		u
		\coloneqq \calA + \calB \rho^{n + 1} \log (- \rho)
	\end{equation}
	satisfies
	\begin{equation}
	\label{eq:def-of-calP-operator}
		- d d^{c} (\wtf_{1} \wtf_{2}) \wedge \omega_{+}^{n - m} \wedge \Phi(\Theta) 
		= [\Box_{+} u + O(\rho^{n + 2} \log (- \rho))] \omega_{+}^{n + 1}.
	\end{equation}
	Moreover,
	$\calB$ is unique modulo $O(\rho)$,
	and $\calB|_{M}$ is determined only by $f_{1}$, $f_{2}$,
	$R_{\alpha \ovxb \rho \ovxs}$, $A_{\alpha \beta}$,
	and their covariant derivatives.
\end{proposition}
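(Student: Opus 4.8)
The plan is to run the proof of \cref{prop:P-operator} with the single harmonic extension $\wtf$ replaced throughout by the product $\wtf_{1} \wtf_{2}$. As there, I would first produce $\chi \in C^{\infty}(\ovxco)$ with
\begin{equation}
	- d d^{c} (\wtf_{1} \wtf_{2}) \wedge \omega_{+}^{n - m} \wedge \Phi(\Theta)
	= \chi \, \omega_{+}^{n + 1},
\end{equation}
and then reduce the existence of $\calA, \calB$, the uniqueness of $\calB$ modulo $O(\rho)$, and the asserted dependence of $\calB|_{M}$ to \cref{thm:solution-of-Dirichlet-problem}, by verifying that $\chi|_{M} = 0$ and that $((N^{p} \chi)|_{M})_{p = 1}^{n + 1}$ are expressed by $f_{1}$, $f_{2}$, $R_{\alpha \ovxb \rho \ovxs}$, $A_{\alpha \beta}$, and their covariant derivatives. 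Granting this, bilinearity of $\calP_{\Phi}$ is immediate from the linearity of $f_{i} \mapsto \wtf_{i}$ and of the Dirichlet problem, while symmetry follows from $\wtf_{1} \wtf_{2} = \wtf_{2} \wtf_{1}$ and the fact that $\Phi(\Theta)$ is an $(m, m)$-form near $M$.

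The cases $m = 1$ and $2 \leq m \leq n$ should carry over with essentially no change. For $m = 1$, \cref{eq:trace-of-renormalized-curvature} gives $\Phi(\Theta) = c \, d d^{c} O(\rho^{n + 2})$; since $d d^{c}(\wtf_{1} \wtf_{2})$ is bounded up to $M$, the left-hand side is $O(\rho^{n + 2}) \omega_{+}^{n + 1}$, so $\calA = \calB = 0$ solves the equation and $\calB|_{M} = 0$ by uniqueness. For $2 \leq m \leq n$, the factors $(- \rho)^{m + 1}$ and $(- \rho)^{m + 2}$ in the expansion of $\chi$ mean that, after applying $\wtna_{N}^{p}$ with $p \leq n + 1$ and restricting to $M$, only $\wtna_{N}^{q} \Phi(\Theta)|_{M}$ with $q \leq n - 1$ and $N^{q}(\wtf_{1} \wtf_{2})|_{M}$ with $q \leq n$ occur; the former are handled by \cref{lem:normal-derivative-of-renormalized-curvature} and the latter, via the Leibniz rule, by \cref{prop:harmonic-extension}. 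The only new ingredient relative to \cref{prop:P-operator} here is the Leibniz expansion of $N^{q}(\wtf_{1} \wtf_{2})$.

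The genuinely new difficulty --- and the step I expect to be the main obstacle --- is $m = 0$. Here $\Phi$ is a constant $c$ and the defining relation collapses to
\begin{equation}
	\chi = \frac{c}{n + 1} \Box_{+}(\wtf_{1} \wtf_{2}),
\end{equation}
with no $(- \rho)$-power to suppress high normal derivatives; and, crucially, $\wtf_{1} \wtf_{2}$ is \emph{not} $\Box_{+}$-harmonic to order $n + 1$, so the argument of \cref{prop:P-operator} does not apply directly. I would expand by the product rule for the $\delb$-Laplacian,
\begin{equation}
	\Box_{+}(\wtf_{1} \wtf_{2})
	= \wtf_{1} \Box_{+} \wtf_{2} + \wtf_{2} \Box_{+} \wtf_{1} + G(\wtf_{1}, \wtf_{2}),
\end{equation}
where $G$ is the bilinear first-order term formed from the $\omega_{+}$-pairing of the two gradients. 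Since the inverse of $\omega_{+}$ carries a factor $(- \rho)^{2}$ in the normal and $(- \rho)$ in the horizontal directions, $G = O(\rho)$ and $N^{p} G|_{M}$ involves only $N^{q} \wtf_{i}|_{M}$ with $q \leq n$ for every $p \leq n + 1$; these are boundary data by \cref{prop:harmonic-extension}. Together with $\wtf_{i} \Box_{+} \wtf_{j} = O(\rho^{n + 1})$ this already yields $\chi|_{M} = 0$ and controls $(N^{p} \chi)|_{M}$ for $1 \leq p \leq n$.

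The delicate order is $p = n + 1$, where the a priori free datum $N^{n + 1} \wtf_{i}|_{M}$ (the origin of the logarithmic term in the Dirichlet problem) threatens to enter through $N^{n + 1}(\wtf_{i} \Box_{+} \wtf_{j})|_{M}$. I would rule this out using \cref{prop:harmonic-extension} in the refined form
\begin{equation}
	\Box_{+} \wtf_{j}
	= (n + 1) \wtg_{j} \rho^{n + 1} + O(\rho^{n + 2} \log (- \rho)),
\end{equation}
which forces all $n + 1$ normal derivatives to land on the factor $\rho^{n + 1}$ and gives
\begin{equation}
	[N^{n + 1}(\wtf_{i} \Box_{+} \wtf_{j})]|_{M}
	= (n + 1)! \, (n + 1) \, f_{i} \, \wtg_{j}|_{M}.
\end{equation}
As $\wtg_{j}|_{M}$ is itself boundary data (indeed $\wtg_{j}|_{M} = - (n! (n + 1)!)^{-1} P f_{j}$), the free datum $N^{n + 1} \wtf_{i}|_{M}$ drops out, and $(N^{n + 1} \chi)|_{M}$ is expressed by $f_{1}$, $f_{2}$, $R_{\alpha \ovxb \rho \ovxs}$, $A_{\alpha \beta}$, and their covariant derivatives. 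With $\chi|_{M} = 0$ and all of $((N^{p} \chi)|_{M})_{p = 1}^{n + 1}$ under control, \cref{thm:solution-of-Dirichlet-problem} closes the $m = 0$ case and completes the proof.
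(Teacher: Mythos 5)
Your proposal is correct, and its skeleton---writing $-dd^{c}(\wtf_{1}\wtf_{2})\wedge\omega_{+}^{n-m}\wedge\Phi(\Theta)=\chi\,\omega_{+}^{n+1}$, checking $\chi|_{M}=0$, expressing $((N^{p}\chi)|_{M})_{p=1}^{n+1}$ in the boundary data via the Leibniz rule, and invoking \cref{thm:solution-of-Dirichlet-problem}---is exactly the ``argument similar to the proof of \cref{prop:P-operator}'' that the paper intends, including the case split and the use of \cref{eq:trace-of-renormalized-curvature} for $m=1$ and \cref{lem:normal-derivative-of-renormalized-curvature} for $2\leq m\leq n$. Where you genuinely diverge is the $m=0$ case, which you flag as the main obstacle; the paper's route (made explicit in the example following \cref{def:calP_Phi-operator}) is shorter. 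For $\Phi=c$ the defining equation is equivalent to
\begin{equation}
	\Box_{+}\rbra*{u-\tfrac{c}{n+1}\wtf_{1}\wtf_{2}}
	= O(\rho^{n+2}\log(-\rho)),
\end{equation}
so $u$ differs from $\tfrac{c}{n+1}\wtf_{1}\wtf_{2}$ by an asymptotic solution of the Dirichlet problem with boundary value $-\tfrac{c}{n+1}f_{1}f_{2}$; applying \cref{prop:harmonic-extension} to the boundary value $f_{1}f_{2}$ then produces $\calA$, $\calB$ at once, together with the identity $\calP_{\Phi}(f_{1},f_{2})=P_{\Phi}(f_{1}f_{2})$, and no normal derivatives of $\Box_{+}(\wtf_{1}\wtf_{2})$ ever need to be examined. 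Your alternative---expanding $\Box_{+}(\wtf_{1}\wtf_{2})$ by the product rule, observing that the gradient cross term is $O(\rho)$ with normal derivatives controlled by the degenerate structure of $\omega_{+}^{-1}$, and using the refined expansion $\Box_{+}\wtf_{j}=(n+1)\wtg_{j}\rho^{n+1}+O(\rho^{n+2}\log(-\rho))$ to show that the a priori free datum $(N^{n+1}\wtf_{i})|_{M}$ drops out---is also valid, and it is a correct identification of the one place where the argument of \cref{prop:P-operator} does not transfer verbatim. What it costs is length; what the paper's shortcut buys, beyond brevity, is the explicit formula $\calP_{\Phi}(f_{1},f_{2})=P_{\Phi}(f_{1}f_{2})$ in the $m=0$ case, which your treatment recovers only implicitly.
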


\begin{definition}
\label{def:calP_Phi-operator}
	The \emph{$\calP_{\Phi}$-operator} $\calP_{\Phi}$ is defined by
	$\calP_{\Phi} (f_{1}, f_{2}) \coloneqq \calB|_{M}$.
\end{definition}

\begin{example}
	If $m = 0$,
	then $\Phi$ is a constant $c \in \bbC$.
	In this case,
	\begin{equation}
		-  (n + 1) d d^{c} (\wtf_{1} \wtf_{2}) \wedge \omega_{+}^{n} \wedge \Phi(\Theta)
		= \Box_{+}(c \wtf_{1} \wtf_{2}) \omega_{+}^{n + 1}.
	\end{equation}
	Hence $\calA$ and $\calB$ satisfy
	\begin{equation}
		\Box_{+} (c \wtf_{1} \wtf_{2} - (n + 1) \calA - (n + 1) \calB \rho^{n + 1} \log (- \rho))
		= O(\rho^{n + 2} \log (- \rho)).
	\end{equation}
	It follows from \cref{prop:harmonic-extension} that
	\begin{equation}
		\calP_{\Phi}(f_{1}, f_{2})
		= \calB|_{M}
		= \frac{c}{((n + 1)!)^{2}} P (f_{1} f_{2})
		= P_{\Phi} (f_{1} f_{2}).
	\end{equation}
\end{example}

An argument similar to that in the proof of \cref{prop:transformation-law-of-P-operator} gives
the transformation rule under conformal change.

\begin{proposition}
\label{prop:transformation-law-of-calP-operator}
	Let $\whxth = e^{\Upsilon} \theta$ be another pseudo-Einstein contact form.
	Then $e^{(n + 1) \Upsilon} \widehat{\calP}_{\Phi} = \calP_{\Phi}$,
	where $\widehat{\calP}_{\Phi}$ is defined in terms of $\whxth$.
\end{proposition}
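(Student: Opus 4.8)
The plan is to mimic the proof of \cref{prop:transformation-law-of-P-operator} with $\wtf$ replaced by the product $\wtf_{1} \wtf_{2}$. Since $\theta$ and $\whxth$ are both pseudo-Einstein, $\Upsilon$ is CR pluriharmonic; I would take its pluriharmonic extension $\wtxcu$, so that $\whxr = e^{\wtxcu} \rho$ is a Fefferman defining function associated with $\whxth$. The first point to record is that the harmonic extensions $\wtf_{i}$ used to define $\calP_{\Phi}(f_{1}, f_{2})$ may be reused for $\whxr$: because $\wtxcu$ is pluriharmonic we have $d d^{c} \wtxcu = 0$, so that $\omega_{+} = - d d^{c} \log(- \rho) = - d d^{c} \log(- \whxr)$ is unchanged and hence $\Box_{+}$ is the same operator; moreover $\whxr$ differs from $\rho$ by the nonvanishing smooth factor $e^{\wtxcu}$, so the condition $\Box_{+} \wtf_{i} = O(\rho^{n + 1})$ is equivalent to $\Box_{+} \wtf_{i} = O(\whxr^{n + 1})$. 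Thus the same $\wtf_{i}$ serve in the definition of $\widehat{\calP}_{\Phi}(f_{1}, f_{2})$.

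Next I would let $\whu = \whA + \whB \whxr^{n + 1} \log(- \whxr)$ be a solution of \cref{eq:def-of-calP-operator} with respect to $\whxr$, so that $\whA|_{M} = 0$ and $\widehat{\calP}_{\Phi}(f_{1}, f_{2}) = \whB|_{M}$. The key algebraic step is to expand $\log(- \whxr) = \wtxcu + \log(- \rho)$ and $\whxr^{n + 1} = e^{(n + 1) \wtxcu} \rho^{n + 1}$, which yields
\begin{equation}
	\whu
	= \whA + e^{(n + 1) \wtxcu} \whB \wtxcu \rho^{n + 1}
		+ e^{(n + 1) \wtxcu} \whB \rho^{n + 1} \log(- \rho).
\end{equation}
This exhibits $\whu$ in the form $\calA + \calB \rho^{n + 1} \log(- \rho)$ with $\calA \coloneqq \whA + e^{(n + 1) \wtxcu} \whB \wtxcu \rho^{n + 1}$ and $\calB \coloneqq e^{(n + 1) \wtxcu} \whB$; note that $\calA|_{M} = 0$, since $\whA|_{M} = 0$ and the remaining summand carries a factor $\rho^{n + 1}$.

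It then remains to check that this same function $\whu$ solves \cref{eq:def-of-calP-operator} with respect to $\rho$. Because $\omega_{+}$ and $\Phi(\Theta)$ are invariant under the change $\rho \mapsto \whxr$ and the same $\wtf_{i}$ are used, the left-hand side of \cref{eq:def-of-calP-operator} is unchanged; and the error term $O(\whxr^{n + 2} \log(- \whxr))$ becomes $O(\rho^{n + 2} \log(- \rho))$, since $e^{\wtxcu}$ is smooth and nonvanishing and $\log(- \whxr) / \log(- \rho) \to 1$ at the boundary. Invoking the uniqueness of $\calB$ modulo $O(\rho)$ from \cref{prop:calP-operator} and using $\wtxcu|_{M} = \Upsilon$, I would conclude
\begin{equation}
	\calP_{\Phi}(f_{1}, f_{2})
	= \calB|_{M}
	= (e^{(n + 1) \wtxcu} \whB)|_{M}
	= e^{(n + 1) \Upsilon} \widehat{\calP}_{\Phi}(f_{1}, f_{2}).
\end{equation}
The only step demanding any care is the bookkeeping that separates the smooth summand $e^{(n + 1) \wtxcu} \whB \wtxcu \rho^{n + 1}$ (absorbed into $\calA$) from the genuinely logarithmic term; no new difficulty beyond the $P_{\Phi}$ case arises, since the argument never uses any harmonicity of the product $\wtf_{1} \wtf_{2}$, but only that of each $\wtf_{i}$ together with the invariance of $\omega_{+}$ and $\Phi(\Theta)$.
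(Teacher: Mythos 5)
Your proposal is correct and follows essentially the same route as the paper: the paper's own ``proof'' is just the remark that the argument of \cref{prop:transformation-law-of-P-operator} applies verbatim with $\wtf$ replaced by $\wtf_{1} \wtf_{2}$, which is exactly what you carry out, down to the same decomposition $\whu = \calA + e^{(n + 1) \wtxcu} \whB \, \rho^{n + 1} \log (- \rho)$ and the appeal to the uniqueness of $\calB$ modulo $O(\rho)$ in \cref{prop:calP-operator}. Your additional bookkeeping (that $\Box_{+}$ and the class $O(\rho^{n+1})$ are unchanged because $d d^{c} \wtxcu = 0$ and $e^{\wtxcu}$ is nonvanishing, so the same extensions $\wtf_{i}$ may be reused) is a correct and worthwhile point that the paper leaves implicit.
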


We also show that the integral of $\calP_{\Phi}(f_{1}, f_{2})$ must vanish.

\begin{proposition}
\label{prop:divergence-of-calP-operator}
	For any $f_{1}, f_{2} \in C^{\infty}(M)$,
	one has
	\begin{equation}
		\int_{M} \calP_{\Phi}(f_{1}, f_{2}) \, \theta \wedge (d \theta)^{n}
		= 0.
	\end{equation}
\end{proposition}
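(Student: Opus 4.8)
The plan is to prove $\int_M \calP_{\Phi}(f_1, f_2) \, \theta \wedge (d\theta)^n = 0$ by showing that $\calP_{\Phi}(f_1, f_2)$ arises, up to a nonzero constant and modulo an exact form on $M$, as the coefficient of the divergent boundary contribution in a renormalized integral whose integrand is globally exact. Concretely, following the structure used in the proof of \cref{prop:symmetry-of-P-operatior}, I would take harmonic extensions $\wtf_i$ with $\Box_+ \wtf_i = O(\rho^{n+1})$ and let $u = \calA + \calB \rho^{n+1} \log(-\rho)$ solve \cref{eq:def-of-calP-operator} with $\calB|_M = \calP_{\Phi}(f_1, f_2)$. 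The key identity I want to extract is that, for the test function $\equiv 1$ in the role of ``$f_1$'' in \cref{prop:symmetry-of-P-operatior}, the boundary pairing against the constant function picks out exactly $\int_M \calP_{\Phi}(f_1,f_2)\,\theta\wedge(d\theta)^n$.

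First I would set up the renormalized integral
\begin{equation}
	J
	\coloneqq (n + 1) \lp \int_{\rho < - \varepsilon} d \rbra{u}
		\wedge d^{c}(1) \wedge \omega_{+}^{n},
\end{equation}
and observe that since $d^c$ of the constant function $1$ vanishes, this is trivially zero; the real content comes from running the integration-by-parts argument of \cref{prop:symmetry-of-P-operatior} with $\wtf_1$ replaced by the constant extension of $1$ and $\wtf_2$ replaced by the product extension $\wtf_1 \wtf_2$. The upshot is that the single Stokes computation
\begin{equation}
	(n + 1) \lp \int_{\rho < - \varepsilon} d \rbra{1 \cdot d^{c} u \wedge \omega_{+}^{n}}
	= (- 1)^{n} (n + 1)^{2} \int_{M} 1 \cdot \calP_{\Phi}(f_1, f_2) \, \theta \wedge (d \theta)^{n}
	+ \lp \int_{\rho < - \varepsilon} \Box_{+} u \, \omega_{+}^{n + 1},
\end{equation}
together with the fact that $d(\text{anything})$ integrates over $\{\rho < -\varepsilon\}$ to a boundary term at $\rho = -\varepsilon$, lets me isolate $\int_M \calP_{\Phi}(f_1,f_2)$. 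The crucial point is that the bulk term $\lp \int \Box_+ u \, \omega_+^{n+1}$ equals the logarithmic part of $-\int d d^c(\wtf_1 \wtf_2) \wedge \omega_+^{n-m} \wedge \Phi(\Theta)$ by \cref{eq:def-of-calP-operator}, and this latter integrand is \emph{globally} exact: writing $d d^c(\wtf_1\wtf_2) = d(d^c(\wtf_1\wtf_2))$ and using that $\Phi(\Theta)$ is closed, the whole $(n+1,n+1)$-form is $d$ of a smooth form on $\Omega$, so its renormalized integral has vanishing $\log$-part by Stokes and the integrability remark preceding \cref{prop:symmetry-of-P-operatior}.

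The main obstacle I anticipate is controlling the boundary term at $\rho = -\varepsilon$ and confirming that no finite remainder masquerades as a $\log\varepsilon$ coefficient. Specifically, after applying Stokes I must expand $d^c u = d^c(\calA + \calB \rho^{n+1}\log(-\rho))$ restricted to $\{\rho = -\varepsilon\}$ against $(\varepsilon^{-1} d\vartheta)^n$, and extract precisely the $\log\varepsilon$ coefficient; the term carrying $\calB|_M = \calP_{\Phi}(f_1,f_2)$ multiplied by $\rho^{n+1}$ and $\varepsilon^{-n}$ is exactly what survives, while the $\calA$-part contributes only polynomial-in-$\varepsilon$ terms because $\calA|_M = 0$. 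I would therefore mirror the two displayed boundary computations in \cref{prop:symmetry-of-P-operatior} verbatim, substituting the constant in place of $\wtf_1$; since the constant factor is literally $1$, the boundary integral $\int_{\rho=-\varepsilon} (A + B(-\varepsilon)^{n+1}\log\varepsilon)\,d^c(1)\wedge(\dots)$ of the ``other'' orientation vanishes identically, leaving only the clean pairing. Combining the vanishing of $J$ with the vanishing of the global-exactness bulk term then forces $\int_M \calP_{\Phi}(f_1,f_2)\,\theta\wedge(d\theta)^n = 0$, as desired.
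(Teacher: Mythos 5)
Your argument is correct and is essentially the paper's own proof: applying Stokes to $\lp \int_{\rho<-\varepsilon} (\Box_{+} u)\,\omega_{+}^{n+1} = -(n+1)\lp\int_{\rho<-\varepsilon} d\bigl(d^{c}u\wedge\omega_{+}^{n}\bigr)$ extracts $(-1)^{n+1}(n+1)^{2}\int_{M}\calP_{\Phi}(f_{1},f_{2})\,\theta\wedge(d\theta)^{n}$ from the $\calB\rho^{n+1}\log(-\rho)$ term, while this same quantity vanishes because $d d^{c}(\wtf_{1}\wtf_{2})\wedge\omega_{+}^{n-m}\wedge\Phi(\Theta)$ is exact (so its renormalized integral is a boundary term with no $\log\varepsilon$ coefficient) and the $O(\rho^{n+2}\log(-\rho))$ error is integrable. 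One bookkeeping slip: in your displayed identity the left-hand side should be the trivially zero $(n+1)\lp\int_{\rho<-\varepsilon} d(1)\wedge d^{c}u\wedge\omega_{+}^{n}$, which then equals boundary term plus bulk term, rather than $(n+1)\lp\int_{\rho<-\varepsilon} d\bigl(1\cdot d^{c}u\wedge\omega_{+}^{n}\bigr)$ (the latter is itself the boundary term), but this does not affect the substance of the argument.
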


\begin{proof}
	Let $\wtf_{1}$, $\wtf_{2}$, and $u$ be as in \cref{prop:calP-operator}.
	Then
	\begin{align}
		\lp \int_{\rho < - \varepsilon} d d^{c} (\wtf_{1} \wtf_{2})
			\wedge \omega_{+}^{n - m} \wedge \Phi(\Theta)
		&= \lp \int_{\rho < - \varepsilon} d \sbra{d^{c} (\wtf_{1} \wtf_{2})
			\wedge \omega_{+}^{n - m} \wedge \Phi(\Theta)} \\
		&= \lp \int_{\rho = - \varepsilon} d^{c} (\wtf_{1} \wtf_{2})
			\wedge (\varepsilon^{- 1} d \vartheta)^{n - m} \wedge \Phi(\Theta) \\
		&= 0.
	\end{align}
	Hence
	\begin{align}
		0
		&= \lp \int_{\rho < - \varepsilon} (\Box_{+} u) \omega_{+}^{n + 1} \\
		&= - (n + 1) \lp \int_{\rho < - \varepsilon} d d^{c} u \wedge \omega_{+}^{n} \\
		&= - (n + 1) \lp \int_{\rho < - \varepsilon} d \rbra{ d^{c} u \wedge \omega_{+}^{n}} \\
		&= - (n + 1) \lp \int_{\rho = - \varepsilon} d^{c} \rbra{\calA + \calB \rho^{n + 1} \log (- \rho)}
			\wedge (\varepsilon^{- 1} d \vartheta)^{n} \\
		&= (- 1)^{n + 1} (n + 1)^{2} \int_{M} \calP_{\Phi}(f_{1}, f_{2}) \, \theta \wedge (d \theta)^{n},
	\end{align}
	which completes the proof.
\end{proof}

\section{$P_{\Phi}$-prime operator}
\label{section:P_Phi-prime-operator}

Since any $f \in \scrP$ has a pluriharmonic extension,
$P_{\Phi} f = 0$ by the definition of $P_{\Phi}$.
Then we can define the ``secondary'' version of the $P_{\Phi}$-operator,
which is a generalization of the $P$-prime operator introduced in~\cites{Case-Yang2013,Hirachi2014}.

\begin{proposition}
\label{prop:P-prime-operator}
	Let $f \in \scrP$
	and take its pluriharmonic extension $\wtf$.
	Then there exist $A^{\prime}, B^{\prime} \in C^{\infty}(\ovxco)$ such that
	$A^{\prime}|_{M} = 0$ and
	\begin{equation}
		u^{\prime}
		\coloneqq A^{\prime} + B^{\prime} \rho^{n + 1} \log (- \rho)
	\end{equation}
	satisfies
	\begin{equation}
	\label{eq:def-of-P-prime-operator}
		- 2 d \log (- \rho) \wedge d^{c} \wtf \wedge \omega_{+}^{n - m} \wedge \Phi(\Theta) 
		= [\Box_{+} u^{\prime} + O(\rho^{n + 2} \log (- \rho))] \omega_{+}^{n + 1}.
	\end{equation}
	Moreover,
	$B^{\prime}$ is unique modulo $O(\rho)$,
	and $B^{\prime}|_{M}$ is determined only by $f$,
	$R_{\alpha \ovxb \rho \ovxs}$, $A_{\alpha \beta}$,
	and their covariant derivatives.
\end{proposition}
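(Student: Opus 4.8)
The plan is to follow closely the structure of the proof of \cref{prop:P-operator}, treating the left-hand side of \cref{eq:def-of-P-prime-operator} as a new ``source'' form whose expansion in powers of $\rho$ I must control. The key difference from \cref{prop:P-operator} is the presence of $d \log (- \rho) = \rho^{- 1} d \rho$, which introduces an extra factor of $(-\rho)^{-1}$ compared to the $d d^{c} \wtf$ appearing before. Because $\wtf$ is a \emph{pluriharmonic} extension (so that $d d^{c} \wtf = 0$ identically, not merely $\Box_{+} \wtf = O(\rho^{n+1})$), this singular factor is compensated: the source form is still smooth enough up to the boundary after dividing by $\omega_{+}^{n+1}$.

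First I would use the explicit formulae for $\omega_{+}^{n+1}$ and $\omega_{+}^{n-m}$ recorded in the proof of \cref{prop:P-operator} to write
\begin{equation}
	- 2 d \log (- \rho) \wedge d^{c} \wtf \wedge \omega_{+}^{n - m} \wedge \Phi(\Theta)
	= \chi \, \omega_{+}^{n + 1}
\end{equation}
for some $\chi \in C^{\infty}(\ovxco)$, and then verify that $\chi|_{M} = 0$ and that $((N^{p} \chi)|_{M})_{p=1}^{n+1}$ are expressed in terms of $f$, $R_{\alpha \ovxb \rho \ovxs}$, $A_{\alpha \beta}$, and their covariant derivatives. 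The crucial input is that $\wtf$ is pluriharmonic, so $\Phi(\Theta)$ is an $(m,m)$-form near $M$ and the wedge $d \log(-\rho) \wedge d^{c} \wtf \wedge \wtxm^{n-m} \wedge \Phi(\Theta)$ contributes the lowest-order singularity; matching the $d\rho \wedge \vartheta \wedge \wtxm^{n}$ top-form shows that the apparent $(-\rho)^{-1}$ pole is cancelled by the factor $(-\rho)^{m+1}$ coming from $\omega_{+}^{n-m}$, leaving $\chi$ smooth with $\chi|_{M}=0$. The normal derivatives $\wtna_{N}^{p}$ of the source form are then controlled by \cref{prop:pluriharmonic-extension} (which now gives $(N^{p}\wtf)|_{M}$ up to $p = n+1$, the key improvement over \cref{prop:harmonic-extension}) together with \cref{prop:normal-derivative-of-GL} and \cref{lem:normal-derivative-of-renormalized-curvature}.

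Once $\chi|_{M}=0$ and the boundary values of its normal derivatives through order $n+1$ are shown to be local CR data, I would invoke \cref{thm:solution-of-Dirichlet-problem} verbatim: it produces $A^{\prime}, B^{\prime} \in C^{\infty}(\ovxco)$ with $A^{\prime}|_{M} = 0$ satisfying $\Box_{+}(A^{\prime} + B^{\prime} \rho^{n+1}\log(-\rho)) = \chi + O(\rho^{n+2}\log(-\rho))$, with $B^{\prime}$ unique modulo $O(\rho)$ and $B^{\prime}|_{M}$ determined by $((N^{l}\chi)|_{M})_{l=1}^{n+1}$ and hence by $f$, $R_{\alpha \ovxb \rho \ovxs}$, $A_{\alpha \beta}$, and their covariant derivatives. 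This immediately yields \cref{eq:def-of-P-prime-operator} and all the uniqueness and locality claims. The main obstacle I anticipate is the verification that the $d\log(-\rho)$ singularity is exactly cancelled so that $\chi$ extends smoothly with $\chi|_{M}=0$; this is where pluriharmonicity of $\wtf$ and the precise powers of $(-\rho)$ in the expansions of $\omega_{+}^{n-m}$ and $\omega_{+}^{n+1}$ must be tracked carefully, and it is essentially the reason the argument requires the sharper extension result \cref{prop:pluriharmonic-extension} rather than merely \cref{prop:harmonic-extension}.
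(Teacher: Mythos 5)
Your proposal follows essentially the same route as the paper's proof: write the source form as $\chi \, \omega_{+}^{n + 1}$ with $\chi \in C^{\infty}(\ovxco)$, check that $\chi|_{M} = 0$ and that $((N^{p} \chi)|_{M})_{p = 1}^{n + 1}$ are local CR data via \cref{prop:pluriharmonic-extension}, \cref{prop:normal-derivative-of-GL}, and \cref{lem:normal-derivative-of-renormalized-curvature}, and then invoke \cref{thm:solution-of-Dirichlet-problem} to produce $A^{\prime}$ and $B^{\prime}$ with the stated uniqueness and locality. One minor correction of emphasis: the cancellation of the $\rho^{-1}$ pole is pure wedge algebra (the $d \rho \wedge \vartheta$ part of $\omega_{+}^{n - m}$ dies against $d \log (- \rho)$, leaving the $(- \rho)^{m + 1}$ factor you identify), and pluriharmonicity of $\wtf$ enters only where you use it last, namely to control $(N^{p} \wtf)|_{M}$ up to $p = n + 1$ through \cref{prop:pluriharmonic-extension}.
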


\begin{definition}
\label{def:P_Phi-prime-operator}
	The \emph{$P_{\Phi}$-prime operator} $P_{\Phi}^{\prime}$ is defined by
	$P_{\Phi}^{\prime} f \coloneqq B^{\prime}|_{M}$.
\end{definition}

\begin{proof}[Proof of \cref{prop:P-prime-operator}]
	There exists $\chi \in C^{\infty}(\ovxco)$ such that
	\begin{equation}
		- 2 d \log (- \rho) \wedge d^{c} \wtf \wedge \omega_{+}^{n - m} \wedge \Phi(\Theta)
		= \chi \omega_{+}^{n + 1}.
	\end{equation}
	By \cref{thm:solution-of-Dirichlet-problem},
	it suffices to show that $\chi|_{M} = 0$
	and $((N^{p} \chi)|_{M})_{p = 1}^{n + 1}$ are expressed by
	$f$, $R_{\alpha \ovxb \rho \ovxs}$, $A_{\alpha \beta}$,
	and their covariant derivatives.
	By the definition of $\chi$,
	\begin{equation}
		\chi d \rho \wedge \vartheta \wedge \wtxm^{n}
		= \frac{2}{n + 1} d \rho \wedge \wtxm^{n - m}
			\wedge ((- \rho)^{m + 1} (1 - \kappa \rho)^{- 1} d^{c} \wtf \wedge \Phi(\Theta)).
	\end{equation}
	Since the right hand side is equal to zero on $M$,
	we have $\chi|_{M} = 0$.
	We derive from \cref{eq:parallel-differential-forms} that
	\begin{align}
		&(N^{p} \chi)|_{M} (d \rho \wedge \vartheta \wedge \wtxm^{n})|_{M} \\
		&= \frac{2}{n + 1} [d \rho \wedge \wtxm^{n - m}
			\wedge \wtna_{N}^{p}((- \rho)^{m + 1} (1 - \kappa \rho)^{- 1} d^{c} \wtf \wedge \Phi(\Theta))]|_{M}
	\end{align}
	If $m = 0$,
	then $\Phi$ is a constant.
	It follows from \cref{prop:normal-derivative-of-GL,prop:pluriharmonic-extension} that
	\begin{equation}
		[\wtna_{N}^{p}((- \rho) (1 - \kappa \rho)^{- 1} d^{c} \wtf)]|_{M}
	\end{equation}
	is determined by $f$,
	$R_{\alpha \ovxb \rho \ovxs}$, $A_{\alpha \beta}$,
	and their covariant derivatives if $1 \leq p \leq n + 1$.
	On the other hand,
	if $1 \leq m \leq n$,
	we obtain from \cref{prop:pluriharmonic-extension,prop:normal-derivative-of-GL,lem:normal-derivative-of-renormalized-curvature} that
	\begin{equation}
		[\wtna_{N}^{p}((- \rho)^{m + 1} (1 - \kappa \rho)^{- 1} d^{c} \wtf \wedge \Phi(\Theta))]|_{M}
	\end{equation}
	is written in terms of $f$,
	$R_{\alpha \ovxb \rho \ovxs}$, $A_{\alpha \beta}$,
	and their covariant derivatives if $1 \leq p \leq n + 1$.
	Therefore $((N^{p} \chi)|_{M})_{p = 1}^{n + 1}$ are expressed by
	$f$, $R_{\alpha \ovxb \rho \ovxs}$, $A_{\alpha \beta}$,
	and their covariant derivatives.
\end{proof}

\begin{example}
	If $\deg \Phi = 0$,
	then $\Phi = c \in \bbC$.
	In this case,
	\begin{align}
		&- 2 (n + 1) d \log (- \rho) \wedge d^{c} \wtf \wedge \omega_{+}^{n} \wedge \Phi(\Theta) \\
		&= - (n + 1) d d^{c} (c \wtf \log (- \rho)) \wedge \omega_{+}^{n}
			- (n + 1) c \wtf \omega_{+}^{n + 1} \\
		&= \sbra{\Box_{+} (c \wtf \log (- \rho)) - (n + 1) c \wtf} \omega_{+}^{n + 1}.
	\end{align}
	Hence $A^{\prime}$ and $B^{\prime}$ satisfy
	\begin{equation}
		\Box_{+}(c \wtf \log (- \rho) - (n + 1) A^{\prime} - (n + 1) B^{\prime} \rho^{n + 1} \log (- \rho))
		= (n + 1) c \wtf + O(\rho^{n + 2} \log (- \rho)).
	\end{equation}
	\cite{Hirachi2014}*{Lemma 4.4} implies that
	\begin{equation}
		P_{\Phi}^{\prime} f
		= \frac{c}{((n + 1)!)^{2}} P^{\prime} f,
	\end{equation}
	where $P^{\prime}$ is the $P$-prime operator defined in
	\cites{Case-Yang2013,Hirachi2014}.
\end{example}

The transformation rule of $P_{\Phi}^{\prime}$ under conformal change
is written in terms of $\calP_{\Phi}$.

\begin{proposition}
\label{prop:transformation-law-of-P-prime-operator}
	Let $\whxth = e^{\Upsilon} \theta$ be another pseudo-Einstein contact form.
	Then
	\begin{equation}
		e^{(n + 1) \Upsilon} \whP_{\Phi}^{\prime} f
		= P_{\Phi}^{\prime} f + \calP_{\Phi}(\Upsilon, f).
	\end{equation}
\end{proposition}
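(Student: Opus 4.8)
The plan is to mimic the proof of \cref{prop:transformation-law-of-P-operator}. Write $\wtxcu$ for the pluriharmonic extension of $\Upsilon$, so that $\whxr = e^{\wtxcu} \rho$ is the Fefferman defining function associated with $\whxth$ and both $\omega_{+}$ and $\Phi(\Theta)$ are unchanged. The pluriharmonic extension $\wtf$ of $f$ is determined by the complex structure of $X$ alone, hence is the \emph{same} whether we work with $\rho$ or $\whxr$. Splitting $d \log (- \whxr) = d \wtxcu + d \log (- \rho)$ in the form defining $\whP_{\Phi}^{\prime} f$ gives
\begin{align}
	&- 2 d \log (- \whxr) \wedge d^{c} \wtf \wedge \omega_{+}^{n - m} \wedge \Phi(\Theta) \\
	&= - 2 d \log (- \rho) \wedge d^{c} \wtf \wedge \omega_{+}^{n - m} \wedge \Phi(\Theta)
		- 2 d \wtxcu \wedge d^{c} \wtf \wedge \omega_{+}^{n - m} \wedge \Phi(\Theta).
\end{align}
By the definition of $P_{\Phi}^{\prime}$ (\cref{prop:P-prime-operator}), the first term on the right equals $[\Box_{+} u^{\prime} + O(\rho^{n + 2} \log (- \rho))] \omega_{+}^{n + 1}$ for $u^{\prime} = A^{\prime} + B^{\prime} \rho^{n + 1} \log (- \rho)$ with $B^{\prime}|_{M} = P_{\Phi}^{\prime} f$.

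The crux is to identify the remaining term with the form defining $\calP_{\Phi}(\Upsilon, f)$. Since $\wtxcu$ and $\wtf$ are pluriharmonic, $d d^{c} \wtxcu = d d^{c} \wtf = 0$, whence $d d^{c} (\wtxcu \wtf) = d \wtxcu \wedge d^{c} \wtf + d \wtf \wedge d^{c} \wtxcu$. The form $\omega_{+}^{n - m} \wedge \Phi(\Theta)$ is of bidegree $(n, n)$ near $M$, so upon wedging only the $(1, 1)$-parts of these two products contribute, and those $(1, 1)$-parts agree; consequently
\begin{equation}
	d d^{c} (\wtxcu \wtf) \wedge \omega_{+}^{n - m} \wedge \Phi(\Theta)
	= 2 d \wtxcu \wedge d^{c} \wtf \wedge \omega_{+}^{n - m} \wedge \Phi(\Theta).
\end{equation}
Because $\Box_{+} \wtxcu = \Box_{+} \wtf = 0$, the pair $(\wtxcu, \wtf)$ is admissible in \cref{prop:calP-operator}, so the left-hand side equals $[\Box_{+} u + O(\rho^{n + 2} \log (- \rho))] \omega_{+}^{n + 1}$ for $u = \calA + \calB \rho^{n + 1} \log (- \rho)$ with $\calB|_{M} = \calP_{\Phi}(\Upsilon, f)$. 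I expect this identity --- that pluriharmonicity collapses $d d^{c}$ of the product into twice the symmetric cross term --- to be the only genuinely new point; the rest is the bookkeeping of the previous proposition.

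Combining the two pieces shows that $\whu^{\prime} \coloneqq u^{\prime} + u$ solves the equation defining $\whP_{\Phi}^{\prime} f$ relative to $\whxr$, the error $O(\rho^{n + 2} \log (- \rho))$ being of the form $O(\whxr^{n + 2} \log (- \whxr))$ as well. To extract $\whB^{\prime}$, I use $\whxr^{n + 1} \log (- \whxr) = e^{(n + 1) \wtxcu} \rho^{n + 1} \log (- \rho) + e^{(n + 1) \wtxcu} \wtxcu \rho^{n + 1}$ and set $\whB^{\prime} \coloneqq e^{- (n + 1) \wtxcu} (B^{\prime} + \calB)$, so that
\begin{equation}
	\whu^{\prime}
	= \sbra{(A^{\prime} + \calA) - (B^{\prime} + \calB) \wtxcu \rho^{n + 1}}
		+ \whB^{\prime} \whxr^{n + 1} \log (- \whxr).
\end{equation}
The bracketed term vanishes on $M$, so it serves as $\whA^{\prime}$, and by the uniqueness of $\whB^{\prime}$ modulo $O(\rho)$ in \cref{prop:P-prime-operator} we may read off $\whP_{\Phi}^{\prime} f = \whB^{\prime}|_{M}$. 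Restricting to $M$, where $\wtxcu = \Upsilon$, yields $\whP_{\Phi}^{\prime} f = e^{- (n + 1) \Upsilon} (P_{\Phi}^{\prime} f + \calP_{\Phi}(\Upsilon, f))$, i.e.\ the asserted transformation law.
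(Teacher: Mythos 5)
Your argument is essentially the paper's own proof: the same splitting $d \log (- \whxr) = d \log (- \rho) + d \wtxcu$, the same use of pluriharmonicity of $\wtxcu$ and $\wtf$ to trade $2\, d \wtxcu \wedge d^{c} \wtf$ for $d d^{c} (\wtxcu \wtf)$ against the $(n, n)$-form $\omega_{+}^{n - m} \wedge \Phi(\Theta)$ (a point the paper leaves implicit and you justify correctly via the $(1,1)$-parts), and the same appeal to the uniqueness clauses of \cref{prop:calP-operator,prop:P-prime-operator}. The paper merely runs the bookkeeping in the opposite direction, showing that $u \coloneqq \whu^{\prime} - u^{\prime}$ satisfies the defining equation of $\calP_{\Phi}(\Upsilon, f)$, while you assemble a solution $\whu^{\prime} = u^{\prime} + u$ for $\whxr$ out of known solutions for $\rho$; the two organizations are interchangeable.

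One sign needs correcting. By \eqref{eq:def-of-calP-operator} it is $- d d^{c} (\wtxcu \wtf) \wedge \omega_{+}^{n - m} \wedge \Phi(\Theta)$, not $+ d d^{c} (\wtxcu \wtf) \wedge \omega_{+}^{n - m} \wedge \Phi(\Theta)$, that equals $[\Box_{+} u + O(\rho^{n + 2} \log (- \rho))] \omega_{+}^{n + 1}$ with $\calB|_{M} = \calP_{\Phi}(\Upsilon, f)$. Taken literally, your sentence following the displayed identity would force $\whu^{\prime} = u^{\prime} - u$ and hence produce the wrong sign in the transformation law. With the sign fixed, the remaining term satisfies $- 2 d \wtxcu \wedge d^{c} \wtf \wedge \omega_{+}^{n - m} \wedge \Phi(\Theta) = - d d^{c} (\wtxcu \wtf) \wedge \omega_{+}^{n - m} \wedge \Phi(\Theta) = [\Box_{+} u + O(\rho^{n + 2} \log (- \rho))] \omega_{+}^{n + 1}$, and then your assembly $\whu^{\prime} = u^{\prime} + u$, the extraction of $\whB^{\prime} = e^{- (n + 1) \wtxcu} (B^{\prime} + \calB)$, and the appeal to uniqueness all go through exactly as you wrote them.
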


\begin{proof}
	Since both $\theta$ and $\whxth$ are pseudo-Einstein,
	$\Upsilon$ is a CR pluriharmonic function.
	Take its pluriharmonic extension $\wtxcu$.
	Then $\whxr = e^{\wtxcu} \rho$ is a Fefferman defining function associated with $\whxth$.
	Let $u^{\prime} = A^{\prime} + B^{\prime} \rho^{n + 1} \log (- \rho)$
	and $\whu^{\prime} = \whA^{\prime} + \whB^{\prime} \whxr^{n + 1} \log (- \whxr)$ be solutions
	of \cref{eq:def-of-P-prime-operator} with respect to $\rho$ and $\whxr$ respectively.
	Since $\omega_{+}$ and $\Phi(\Theta)$ are invariant under this change,
	\begin{align}
		& (\Box_{+} \whu^{\prime} + O(\whxr^{n + 2} \log (- \whxr))) \omega_{+}^{n + 1} \\
		&= - 2 d \log (- \whxr) \wedge d^{c} \wtf \wedge \omega_{+}^{n - m} \wedge \Phi(\Theta) \\
		&= - 2 d \log (- \rho) \wedge d^{c} \wtf \wedge \omega_{+}^{n - m} \wedge \Phi(\Theta)
			- 2 d \wtxcu \wedge d^{c} \wtf \wedge \omega_{+}^{n - m} \wedge \Phi(\Theta) \\
		&= \rbra{\Box_{+}u^{\prime} + O(\rho^{n + 2} \log (- \rho))} \omega_{+}^{n + 1}
			- d d^{c} (\wtxcu  \wtf) \wedge \omega_{+}^{n - m} \wedge \Phi(\Theta);
	\end{align}
	in the last equality,
	we use the fact that $\wtf$ and $\wtxcu$ are pluriharmonic.
	This implies that
	\begin{equation}
		u
		\coloneqq \whu^{\prime} - u^{\prime}
		= \rbra{\whA^{\prime} - A^{\prime} + e^{(n + 1) \wtxcu} \whB^{\prime} \wtxcu \rho^{n + 1}}
			+ \rbra{e^{(n + 1) \wtxcu} \whB^{\prime} - B^{\prime}} \rho^{n + 1} \log (- \rho)
	\end{equation}
	satisfies the equation
	\begin{equation}
		- d d^{c} (\wtxcu  \wtf) \wedge \omega_{+}^{n - m} \wedge \Phi(\Theta)
		= \sbra{\Box_{+} u + O(\rho^{n + 2} \log (- \rho))} \omega_{+}^{n + 1}.
	\end{equation}
	If follows from \cref{prop:calP-operator} that
	\begin{equation}
		e^{(n + 1) \Upsilon} \whP_{\Phi}^{\prime} f - P_{\Phi}^{\prime} f
		= (e^{(n + 1) \wtxcu} \whB^{\prime} - B^{\prime})|_{M}
		= \calP_{\Phi}(\Upsilon, f),
	\end{equation}
	which completes the proof.
\end{proof}

We next show that the integral of $P_{\Phi}^{\prime} f$ must vanish.

\begin{proposition}
\label{prop:divergence-of-P-prime-operator}
	For any $f \in \scrP$,
	one has
	\begin{equation}
		\int_{M} (P_{\Phi}^{\prime} f) \, \theta \wedge (d \theta)^{n}
		= 0.
	\end{equation}
\end{proposition}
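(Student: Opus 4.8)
The plan is to integrate the defining relation \cref{eq:def-of-P-prime-operator} over $\{\rho < - \varepsilon\}$ and extract the logarithmic part, mirroring the proof of \cref{prop:divergence-of-calP-operator}. Write $u^{\prime} = A^{\prime} + B^{\prime} \rho^{n + 1} \log (- \rho)$ as in \cref{prop:P-prime-operator}, so that $A^{\prime}|_{M} = 0$ and $B^{\prime}|_{M} = P_{\Phi}^{\prime} f$. Since the $O(\rho^{n + 2} \log (- \rho))$ term integrates to a quantity with vanishing logarithmic part, the right-hand side of \cref{eq:def-of-P-prime-operator} contributes exactly $\lp \int_{\rho < - \varepsilon} (\Box_{+} u^{\prime}) \omega_{+}^{n + 1}$, and the Stokes computation already carried out in \cref{prop:divergence-of-calP-operator} evaluates this to $(- 1)^{n + 1} (n + 1)^{2} \int_{M} (P_{\Phi}^{\prime} f) \, \theta \wedge (d \theta)^{n}$. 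Hence everything reduces to showing that the logarithmic part of the integral of the left-hand side of \cref{eq:def-of-P-prime-operator} is zero.

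First I would use pluriharmonicity of $\wtf$. Since $d d^{c} \wtf = 0$, $d \omega_{+} = 0$, and $d \Phi(\Theta) = 0$ near $M$, the left-hand integrand is exact:
\[
	- 2 d \log (- \rho) \wedge d^{c} \wtf \wedge \omega_{+}^{n - m} \wedge \Phi(\Theta)
	= d \bigl[ - 2 \log (- \rho) \, d^{c} \wtf \wedge \omega_{+}^{n - m} \wedge \Phi(\Theta) \bigr].
\]
Applying Stokes on the collar near $M$ (the contributions away from the boundary are independent of $\varepsilon$ and carry no $\log \varepsilon$), and using $\log (- \rho)|_{\{\rho = - \varepsilon\}} = \log \varepsilon$ together with $\omega_{+}|_{\{\rho = - \varepsilon\}} = \varepsilon^{- 1} d \vartheta$, which follows from \cref{eq:derivative-of-vartheta} because $d \rho$ restricts to zero on the level set, I reduce the logarithmic part of the left-hand side to that of $- 2 \varepsilon^{-(n - m)} \log \varepsilon \cdot K(\varepsilon)$, where $K(\varepsilon) \coloneqq \int_{\{\rho = - \varepsilon\}} d^{c} \wtf \wedge (d \vartheta)^{n - m} \wedge \Phi(\Theta)$.

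The crux is then to prove $K(\varepsilon) = 0$. The integrand is a wedge of closed forms, so $K(\varepsilon)$ is the evaluation of the cup product $[d^{c} \wtf] \smile [d \vartheta]^{n - m} \smile [\Phi(\Theta)]$ on the fundamental class of $\{\rho = - \varepsilon\}$. Each factor represents a class that extends over $\ovxco$: the form $d^{c} \wtf$ is globally defined and closed on $\ovxco$ since $\wtf$ is pluriharmonic; $\Phi(\Theta)$ represents $\Phi$ of the Chern classes of $T^{1, 0} X$, hence the restriction of a class defined on all of $\ovxco$; and $d \vartheta$ is exact. Because $\{\rho = - \varepsilon\}$ bounds $\{\rho \leq - \varepsilon\}$ in $\ovxco$, its fundamental class is null-homologous in $H_{2 n + 1}(\ovxco)$, and the pairing vanishes by naturality. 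This gives $K(\varepsilon) = 0$ uniformly in $m$, so the left-hand integral has zero logarithmic part and the identity of the first paragraph forces $\int_{M} (P_{\Phi}^{\prime} f) \, \theta \wedge (d \theta)^{n} = 0$. I expect the main obstacle to be exactly this last step: for $m < n$ one can argue more elementarily that $(d \vartheta)^{n - m} = d[\vartheta \wedge (d \vartheta)^{n - m - 1}]$ makes the integrand exact on the closed manifold $\{\rho = - \varepsilon\}$, but the genuinely new point is the top case $m = n$, where no $d \vartheta$ factor is available and one must justify that $\Phi(\Theta)|_{\{\rho = - \varepsilon\}}$ represents a class extending over $\ovxco$ and invoke the null-homology of the level set.
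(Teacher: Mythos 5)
Your proposal is correct, and its skeleton is the same as the paper's: integrate \cref{eq:def-of-P-prime-operator} over $\{\rho < -\varepsilon\}$, evaluate the right-hand side by the Stokes computation of \cref{prop:divergence-of-calP-operator}, and reduce everything to the vanishing of the level-set integral $K(\varepsilon)$. Where you genuinely diverge from the paper is in proving $K(\varepsilon) = 0$ in the top case $m = n$. The paper shows the integrand is $d$-exact on the level set itself: for $m = n \geq 2$ it invokes the vanishing theorem of \cite{Takeuchi2020-Chern}*{Theorem 1.1} ($[\Phi(\Theta)] = 0$ in $H^{2m}(M; \bbC)$ whenever $2m \geq n + 2$), and for $m = n = 1$ it uses the trace identity \cref{eq:trace-of-renormalized-curvature}. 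You instead use only Chern--Weil naturality and soft topology: since the renormalized connection form defines an honest connection on $T^{1, 0} X$ over a one-sided collar, smooth up to the boundary (the fact recorded via \cite{Marugame2021}*{Proposition 4.1}), the class $[\Phi(\Theta)]$ restricted to $\{\rho = -\varepsilon\}$ is the restriction of the characteristic class of $T^{1, 0} X$, which lives on the whole filling $\{\rho \leq -\varepsilon\}$; as $[d^{c} \wtf]$ also extends and $d \vartheta$ is globally exact, the cup product pairs to zero against the null-homologous fundamental class of the level set. Your route buys self-containedness and uniformity — it handles $m < n$, $m = n \geq 2$, and $m = n = 1$ by one argument and does not require the external vanishing theorem — at the cost of proving a weaker fact (the pairing vanishes) than the paper's input (the class $[\Phi(\Theta)]$ itself vanishes on $M$ in high degrees). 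The one step you should spell out if you write this up is precisely the Chern--Weil identification of $[\Phi(\Theta)]$ with $\Phi$ of the Chern classes of $T^{1, 0} X$ on the collar, since that is what licenses extending the class across the filling.
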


\begin{proof}
	Let $\wtf$ be the pluriharmonic extension of $f$.
	Then
	\begin{align}
		\lp \int_{\rho < - \varepsilon} d \log (- \rho)
			\wedge d^{c} \wtf \wedge \omega_{+}^{n - m} \wedge \Phi(\Theta)
		&= \lp \int_{\rho < - \varepsilon} d \sbra{\log (- \rho)
			d^{c} \wtf \wedge \omega_{+}^{n - m} \wedge \Phi(\Theta)} \\
		&= \lp \log \varepsilon \cdot \int_{\rho = - \varepsilon} d^{c} \wtf
			\wedge \omega_{+}^{n - m} \wedge \Phi(\Theta).
	\end{align}
	When $m < n$,
	the $2 (n - m)$-form $\omega_{+}^{n - m}$ is $d$-exact on $\Set{\rho = - \varepsilon}$.
	On the other hand,
	it follows from \cite{Takeuchi2020-Chern}*{Theorem 1.1} that
	$[\Phi(\Theta)] = 0$ in $H^{2 m} (M; \bbC)$ if $2 m \geq n + 2$.
	In particular,
	$\Phi(\Theta)$ is $d$-exact on $\Set{\rho = - \varepsilon}$ in the case of $m = n \geq 2$.
	In addition,
	the equality \eqref{eq:trace-of-renormalized-curvature} gives that
	$\Phi(\Theta)$ is $d$-exact when $m = n = 1$.
	Hence $d^{c} \wtf \wedge \omega_{+}^{n - m} \wedge \Phi(\Theta)$ is $d$-exact 
	on $\Set{\rho = - \varepsilon}$.
	The Stokes theorem implies that
	\begin{equation}
		\lp \int_{\rho < - \varepsilon} d \log (- \rho) \wedge d^{c} \wtf
			\wedge \omega_{+}^{n - m} \wedge \Phi(\Theta)
		= 0.
	\end{equation}
	Therefore
	\begin{align}
		0
		&= \lp \int_{\rho < - \varepsilon} (\Box_{+} u^{\prime}) \omega_{+}^{n + 1} \\
		&= - (n + 1) \lp \int_{\rho < - \varepsilon} d d^{c} u^{\prime} \wedge \omega_{+}^{n} \\
		&= - (n + 1) \lp \int_{\rho = - \varepsilon}
			d^{c} \rbra{A^{\prime} + B^{\prime} \rho^{n + 1} \log (- \rho)}
			\wedge (\varepsilon^{- 1} d \vartheta)^{n} \\
		&= (- 1)^{n + 1} (n + 1)^{2} \int_{M} (P_{\Phi}^{\prime} f) \, \theta \wedge (d \theta)^{n},
	\end{align}
	which completes the proof.
\end{proof}

Marugame~\cite{Marugame2018}*{Theorem 1.2} has proved that
the $P$-prime operator is symmetric.
We generalize this result to the $P_{\Phi}$-prime operator when $m < n$;
see \cref{section:deg-Phi=n-case} for the $m = n$ case.

\begin{theorem}
\label{thm:symmetric-of-P-prime}
	If $m < n$,
	then
	\begin{equation}
		\int_{M} f_{1} (P_{\Phi}^{\prime} f_{2}) \, \theta \wedge (d \theta)^{n}
		= \int_{M} f_{2} (P_{\Phi}^{\prime} f_{1}) \, \theta \wedge (d \theta)^{n}
	\end{equation}
	for any $f_{1}, f_{2} \in \scrP$.
\end{theorem}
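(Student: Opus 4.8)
The plan is to adapt the renormalized-integral argument of \cref{prop:symmetry-of-P-operatior}, exploiting that for $f_{1},f_{2}\in\scrP$ the extensions are genuinely pluriharmonic, which collapses most of the bulk terms. Fix a pseudo-Einstein $\theta$, let $\wtf_{1},\wtf_{2}$ be the pluriharmonic extensions of $f_{1},f_{2}$, and let $u_{i}^{\prime}=A_{i}^{\prime}+B_{i}^{\prime}\rho^{n+1}\log(-\rho)$ be as in \cref{prop:P-prime-operator}, so $B_{i}^{\prime}|_{M}=P_{\Phi}^{\prime}f_{i}$ and $-2\,d\log(-\rho)\wedge d^{c}\wtf_{i}\wedge\omega_{+}^{n-m}\wedge\Phi(\Theta)=[\Box_{+}u_{i}^{\prime}+O(\rho^{n+2}\log(-\rho))]\omega_{+}^{n+1}$. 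First I would compute $\lp\int_{\rho<-\varepsilon}\wtf_{1}(\Box_{+}u_{2}^{\prime})\omega_{+}^{n+1}$ in two ways. Substituting the defining equation gives $-2\lp\int_{\rho<-\varepsilon}\wtf_{1}\,d\log(-\rho)\wedge d^{c}\wtf_{2}\wedge\omega_{+}^{n-m}\wedge\Phi(\Theta)$. On the other hand, writing $(\Box_{+}u_{2}^{\prime})\omega_{+}^{n+1}=-(n+1)\,dd^{c}u_{2}^{\prime}\wedge\omega_{+}^{n}$ and integrating by parts twice, using $dd^{c}\wtf_{1}=0$, the fact that $da\wedge d^{c}b$ and $db\wedge d^{c}a$ have the same $(1,1)$-part, and $A_{2}^{\prime}|_{M}=0$, collapses everything to one boundary term, which the computation of \cref{prop:symmetry-of-P-operatior} evaluates as $-(-1)^{n}(n+1)^{2}\int_{M}f_{1}(P_{\Phi}^{\prime}f_{2})\,\theta\wedge(d\theta)^{n}$. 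Equating the two gives the key identity
\[
(-1)^{n}(n+1)^{2}\int_{M}f_{1}(P_{\Phi}^{\prime}f_{2})\,\theta\wedge(d\theta)^{n}=2\lp\int_{\rho<-\varepsilon}\wtf_{1}\,d\log(-\rho)\wedge d^{c}\wtf_{2}\wedge\omega_{+}^{n-m}\wedge\Phi(\Theta).
\]

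Subtracting the same identity with the indices $1,2$ reversed, the desired symmetry becomes equivalent to the vanishing of
\[
D:=\lp\int_{\rho<-\varepsilon}d\log(-\rho)\wedge\eta\wedge\omega_{+}^{n-m}\wedge\Phi(\Theta),\qquad \eta:=\wtf_{1}\,d^{c}\wtf_{2}-\wtf_{2}\,d^{c}\wtf_{1}.
\]
Since $\wtf_{1},\wtf_{2}$ are pluriharmonic, $d\eta=d\wtf_{1}\wedge d^{c}\wtf_{2}-d\wtf_{2}\wedge d^{c}\wtf_{1}$ is of pure bidegree $(2,0)+(0,2)$; hence $d\eta\wedge\omega_{+}^{n-m}\wedge\Phi(\Theta)=0$ by type, so $\eta\wedge\omega_{+}^{n-m}\wedge\Phi(\Theta)$ is a closed $(2n+1)$-form. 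Integrating $d\log(-\rho)\wedge\eta\wedge\cdots=d(\log(-\rho)\,\eta\wedge\cdots)$ by parts then gives $D=\lp[\log\varepsilon\cdot H(\varepsilon)]$ with $H(\varepsilon)=\int_{\rho=-\varepsilon}\eta\wedge\omega_{+}^{n-m}\wedge\Phi(\Theta)$, and closedness makes $H(\varepsilon)$ independent of $\varepsilon$. Thus $D$ equals the common value $H_{0}$, and the whole problem reduces to $H_{0}=0$.

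This last step is the crux, and it is exactly where $m<n$ is used. Since $m\le n-1$ one may write $\omega_{+}^{n-m}\wedge\Phi(\Theta)=-d(d^{c}\log(-\rho)\wedge\omega_{+}^{n-m-1}\wedge\Phi(\Theta))$, using $\omega_{+}=-dd^{c}\log(-\rho)$ and closedness of $\Phi(\Theta)$; substituting and integrating by parts on the closed hypersurface $\{\rho=-\varepsilon\}$ reduces $H_{0}$ to $-\int_{\rho=-\varepsilon}d\eta\wedge d^{c}\log(-\rho)\wedge\omega_{+}^{n-m-1}\wedge\Phi(\Theta)$. Writing $d\eta=\sqrt{-1}(\overline{\phi}-\phi)$ with $\phi:=\del\wtf_{1}\wedge\del\wtf_{2}$, pluriharmonicity makes $\phi$ a $\del$- and $\delb$-closed $(2,0)$-form. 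In the $\phi$-contribution only the $(0,1)$-part of $d^{c}\log(-\rho)$ survives the bidegree count, and $\phi\wedge\delb\log(-\rho)=\delb(\log(-\rho)\phi)$; as $\omega_{+}^{n-m-1}\wedge\Phi(\Theta)$ is $\delb$-closed, the integrand equals $\delb\Psi$ with $\Psi=\log(-\rho)\,\phi\wedge\omega_{+}^{n-m-1}\wedge\Phi(\Theta)$ of bidegree $(n+1,n-1)$. Because $\del\Psi$ has bidegree $(n+2,n-1)=0$, one has $\delb\Psi=d\Psi$, whose integral over the closed $\{\rho=-\varepsilon\}$ vanishes by Stokes; the conjugate argument kills the $\overline{\phi}$-term. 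Hence $H_{0}=0$.

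The hard part is precisely this final vanishing: the naive reductions of $H_{0}$ are circular because every primitive of $\omega_{+}^{n-m}\wedge\Phi(\Theta)$ is singular at $M$, so the period genuinely records boundary data. The decisive observation is that, after invoking the $m<n$ exactness once, the remaining period can be rewritten as $\int\delb\Psi=\int d\Psi=0$ through the bidegree obstruction $\del\Psi=0$ — and it is exactly here that the pluriharmonicity of $f_{1},f_{2}$ (hence the holomorphy of $\phi$) is what makes the argument close. I expect the routine part to be tracking the $\log\varepsilon$ and power-of-$\varepsilon$ expansions in Step~1 and the two integrations by parts, and the essential difficulty to be isolating this $\delb$-exactness mechanism for $H_{0}$.
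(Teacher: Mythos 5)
Your proof is correct, and while its first half is the paper's own argument in different packaging, the decisive step is genuinely different. Your key identity—obtained by computing $\lp\int_{\rho<-\varepsilon}\wtf_{1}(\Box_{+}u_{2}^{\prime})\,\omega_{+}^{n+1}$ in two ways—performs exactly the integrations by parts that the paper organizes into its symmetric quantity $I_{\varepsilon}$ (same use of $dd^{c}\wtf_{1}=0$, of the equality of the $(1,1)$-parts of $da\wedge d^{c}b$ and $db\wedge d^{c}a$, and of $A_{i}^{\prime}|_{M}=0$). The two proofs part ways on the residual bulk term $\lp\int\wtf_{1}\,d\log(-\rho)\wedge d^{c}\wtf_{2}\wedge\omega_{+}^{n-m}\wedge\Phi(\Theta)$: the paper proves it is symmetric by one more Stokes argument, with $m<n$ entering only through the elementary fact that quantities of the shape $\varepsilon^{m-n}\log\varepsilon\times(\text{eventually constant in }\varepsilon)$ have vanishing pure-$\log\varepsilon$ coefficient; you instead antisymmetrize, identify the antisymmetric part with the $\varepsilon$-independent period $H_{0}$ of the closed form $\eta\wedge\omega_{+}^{n-m}\wedge\Phi(\Theta)$, $\eta=\wtf_{1}d^{c}\wtf_{2}-\wtf_{2}d^{c}\wtf_{1}$, and kill $H_{0}$ by a Hodge-type mechanism in which $m<n$ provides the primitive $-d^{c}\log(-\rho)\wedge\omega_{+}^{n-m-1}\wedge\Phi(\Theta)$ and pluriharmonicity makes $\phi=\del\wtf_{1}\wedge\del\wtf_{2}$ a $\delb$-closed $(2,0)$-form, so the remaining hypersurface integral is $\int_{\rho=-\varepsilon}d\Psi=0$ via the bidegree obstruction $\del\Psi=0$. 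I checked your type computations ($d\eta$ has pure bidegree $(2,0)+(0,2)$; the $(1,0)$-part of $d^{c}\log(-\rho)$ dies against $\phi\wedge\omega_{+}^{n-m-1}\wedge\Phi(\Theta)$ because $(n+2,n-1)$-forms vanish on a complex $(n+1)$-manifold) and they are right. What each route buys: the paper's is more elementary, needing only the scaling of $\omega_{+}$ on level sets; yours isolates a cleaner invariant statement (the level-set period $H_{0}$ vanishes) and makes structurally visible why genuine pluriharmonicity of $f_{1},f_{2}$, not mere $\Box_{+}$-harmonicity of extensions, is what closes the argument. The two mechanisms in fact meet: since $\iota^{*}\omega_{+}=\varepsilon^{-1}\iota^{*}(dd^{c}\rho)$ on $\{\rho=-\varepsilon\}$, one has $\varepsilon^{n-m}H_{0}=\int_{\rho=-\varepsilon}\eta\wedge(dd^{c}\rho)^{n-m}\wedge\Phi(\Theta)$, and the right-hand side is itself $\varepsilon$-independent by your type argument, so constancy of both sides with $n-m\geq 1$ already forces $H_{0}=0$—a shortcut to your crux that is essentially the paper's use of $m<n$. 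One point you should make explicit: $\omega_{+}$ and $\Phi(\Theta)$ are defined, closed, and of type $(m,m)$ only near $M$, so your Stokes arguments over all of $\{\rho<-\varepsilon\}$ (the key identity and the reduction $D=H_{0}$) require extending $\Phi(\Theta)$ smoothly to $\Omega$ and tracking compactly supported interior corrections, which are eventually $\varepsilon$-independent and hence contribute no $\log\varepsilon$ term; this is precisely the paper's $(\text{cpt supp})$ bookkeeping, and your final step is unaffected by it since it takes place entirely on hypersurfaces near $M$.
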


\begin{proof}
	Let $\wtf_{i}$ be a pluriharmonic extension of $f_{i}$.
	Take $A_{i}^{\prime}, B_{i}^{\prime} \in C^{\infty}(\ovxco)$ such that
	$A_{i}^{\prime}|_{M} = 0$ and
	\begin{equation}
		u_{i}^{\prime}
		\coloneqq A_{i}^{\prime} + B_{i}^{\prime} \rho^{n + 1} \log (- \rho)
	\end{equation}
	satisfies
	\begin{equation}
		- 2 d \log (- \rho) \wedge d^{c} \wtf_{i} \wedge \omega_{+}^{n - m} \wedge \Phi(\Theta) 
		= [\Box_{+} u_{i}^{\prime} + O(\rho^{n + 2} \log (- \rho))] \omega_{+}^{n + 1}.
	\end{equation}
	For $\varepsilon > 0$,
	set
	\begin{equation}
		I_{\varepsilon}
		\coloneqq \int_{\rho < - \varepsilon} \rbra{d \wtf_{1} \wedge d^{c} u_{2}^{\prime}
			+ d u_{1}^{\prime} \wedge d^{c} \wtf_{2}} \wedge \omega_{+}^{n}.
	\end{equation}
	This is symmetric in the indices $1$ and $2$.
	Consider the logarithmic term of $I_{\varepsilon}$ as $\varepsilon \to + 0$.

	On the one hand,
	\begin{align}
		\lp \int_{\rho < - \varepsilon} d u_{1}^{\prime} \wedge d^{c} \wtf_{2} \wedge \omega_{+}^{n}
		&= \lp \int_{\rho < - \varepsilon} d \rbra{u_{1}^{\prime} d^{c} \wtf_{2} \wedge \omega_{+}^{n}} \\
		&= \lp \int_{\rho = - \varepsilon}
			\rbra{A_{1}^{\prime} + B_{1}^{\prime} (- \varepsilon)^{n + 1} \log \varepsilon}
			d^{c} \wtf_{2} \wedge (\varepsilon^{- 1} d \vartheta)^{n} \\
		&= 0.
	\end{align}

	On the other hand,
	\begin{align}
		&\int_{\rho < - \varepsilon} d \wtf_{1} \wedge d^{c} u_{2}^{\prime} \wedge \omega_{+}^{n} \\
		&= \int_{\rho < - \varepsilon} d \rbra{\wtf_{1} d^{c} u_{2}^{\prime} \wedge \omega_{+}^{n}}
			- \int_{\rho < - \varepsilon} \wtf_{1} d d^{c} u_{2}^{\prime} \wedge \omega_{+}^{n} \\
		&= \int_{\rho = - \varepsilon} \wtf_{1} d^{c} (A_{2}^{\prime} + B_{2}^{\prime} \rho^{n + 1} \log (- \rho))
			\wedge (\varepsilon^{- 1} d \vartheta)^{n} \\
		&\quad - \frac{2}{n + 1} \int_{\rho < - \varepsilon}
			\wtf_{1} d \log (- \rho) \wedge d^{c} \wtf_{2} \wedge \omega_{+}^{n - m} \wedge \Phi(\Theta) \\
		&\quad + \int_{\rho < - \varepsilon} O(\rho^{n + 2} \log (- \rho)) \omega_{+} ^{n + 1}.
	\end{align}
	The logarithmic part of the first term is
	\begin{equation}
	\label{eq:symmetric-in-indices}
		(- 1)^{n} (n + 1) \int_{M} f_{1} (P_{\Phi}^{\prime} f_{2}) \, \theta \wedge (d \theta)^{n},
	\end{equation}
	and that of the third term is equal to zero.
	We consider the second term.
	Extend $\Phi(\Theta)$ to a smooth $(m, m)$-form on $\Omega$ for computation.
	In what follows,
	$(\text{cpt supp})$ stands for a compactly supported form on $\Omega$.
	\begin{align}
		&\int_{\rho < - \varepsilon}
			\wtf_{1} d \log (- \rho) \wedge d^{c} \wtf_{2} \wedge \omega_{+}^{n - m} \wedge \Phi(\Theta) \\
		&= \int_{\rho < - \varepsilon}
			d \sbra{\wtf_{1} \log (- \rho) d^{c} \wtf_{2} \wedge \omega_{+}^{n - m} \wedge \Phi(\Theta)} \\
		&\quad - \int_{\rho < - \varepsilon}
			\log (- \rho) d \wtf_{1} \wedge d^{c} \wtf_{2} \wedge \omega_{+}^{n - m} \wedge \Phi(\Theta)
			+ \int_{\rho < - \varepsilon} (\text{cpt supp}).
	\end{align}
	The second term is symmetric in the indices $1$ and $2$
	while the third term contains no $\log \varepsilon$ term.
	We can compute the first term as follows:
	\begin{align}
		&\int_{\rho < - \varepsilon}
			d \sbra{\wtf_{1} \log (- \rho) d^{c} \wtf_{2} \wedge \omega_{+}^{n - m} \wedge \Phi(\Theta)} \\
		&= \varepsilon^{m - n} \log \varepsilon \int_{\rho = - \varepsilon}
			\wtf_{1} d^{c} \wtf_{2} \wedge (d \vartheta)^{n - m} \wedge \Phi(\Theta) \\
		&= \varepsilon^{m - n} \log \varepsilon \int_{\rho < - \varepsilon}
			d \wtf_{1} \wedge d^{c} \wtf_{2} \wedge (d \vartheta)^{n - m} \wedge \Phi(\Theta) \\
		&\quad + \varepsilon^{m - n} \log \varepsilon
			\int_{\rho < - \varepsilon} (\text{cpt supp}).
	\end{align}
	The first term is symmetric in the indices $1$ and $2$
	while the second term contains no $\log \varepsilon$ term;
	here we use the assumption $m < n$.
	Therefore \cref{eq:symmetric-in-indices} should be symmetric in the indices $1$ and $2$.
\end{proof}

\section{$Q_{\Phi}$-prime curvature}
\label{section:Q_Phi-prime-curvature}

In this section,
we introduce the $Q_{\Phi}$-prime curvature,
a generalization of the $Q$-prime curvature defined in \cites{Case-Yang2013,Hirachi2014}.

\begin{proposition}
\label{prop:Q-prime-curvature}
	There exist $F^{\prime}, G^{\prime} \in C^{\infty}(\ovxco)$ such that
	$F^{\prime}|_{M} = 0$ and
	\begin{equation}
		v^{\prime}
		\coloneqq F^{\prime} + G^{\prime} \rho^{n + 1} \log (- \rho)
	\end{equation}
	satisfies
	\begin{equation}
	\label{eq:def-of-Q-prime-curvature}
		\frac{2}{n - m + 1} \rbra*{\frac{d d^{c} \rho}{- \rho}}^{n - m + 1} \wedge \Phi(\Theta)
		= [\Box_{+} v^{\prime} + O(\rho^{n + 2} \log (- \rho))] \omega_{+}^{n + 1}.
	\end{equation}
	Moreover,
	$G^{\prime}$ is unique modulo $O(\rho)$,
	and $G^{\prime}|_{M}$ is determined only by
	$R_{\alpha \ovxb \rho \ovxs}$, $A_{\alpha \beta}$,
	and their covariant derivatives.
\end{proposition}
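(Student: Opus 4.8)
The plan is to reduce the statement to \cref{thm:solution-of-Dirichlet-problem}, following the pattern of the proof of \cref{prop:P-operator}. Since $\omega_{+}^{n + 1}$ is a nowhere-vanishing top-degree form near $M$, there is a unique $\chi \in C^{\infty}(\ovxco)$ such that
\begin{equation}
	\frac{2}{n - m + 1} \rbra*{\frac{d d^{c} \rho}{- \rho}}^{n - m + 1} \wedge \Phi(\Theta)
	= \chi \, \omega_{+}^{n + 1}.
\end{equation}
It then suffices to verify that $\chi|_{M} = 0$ and that $((N^{p} \chi)|_{M})_{p = 1}^{n + 1}$ are expressed by $R_{\alpha \ovxb \rho \ovxs}$, $A_{\alpha \beta}$, and their covariant derivatives; granting this, \cref{thm:solution-of-Dirichlet-problem} supplies $F^{\prime}, G^{\prime}$ with the asserted existence, uniqueness, and locality, the pair $(F^{\prime}, G^{\prime})$ playing the role of $(A, B)$ there.

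To identify $\chi$, I would first pass to the Graham--Lee frame. By \cref{eq:derivative-of-vartheta} we have $d d^{c} \rho = d \vartheta = \wtxm + \kappa \, d \rho \wedge \vartheta$, and because $(d \rho \wedge \vartheta)^{2} = 0$ the binomial expansion truncates to
\begin{equation}
	\rbra*{\frac{d d^{c} \rho}{- \rho}}^{n - m + 1}
	= \frac{1}{(- \rho)^{n - m + 1}} \sbra{\wtxm^{n - m + 1}
		+ (n - m + 1) \kappa \, d \rho \wedge \vartheta \wedge \wtxm^{n - m}}.
\end{equation}
Wedging with $\Phi(\Theta)$ and dividing by $\omega_{+}^{n + 1} = (n + 1)(1 - \kappa \rho)(- \rho)^{- n - 2} \, d \rho \wedge \vartheta \wedge \wtxm^{n}$, I obtain
\begin{align}
	\chi \, d \rho \wedge \vartheta \wedge \wtxm^{n}
	&= \frac{2 (- \rho)^{m + 1}}{(n + 1)(n - m + 1)(1 - \kappa \rho)}
		\wtxm^{n - m + 1} \wedge \Phi(\Theta) \\
	&\quad + \frac{2 \kappa (- \rho)^{m + 1}}{(n + 1)(1 - \kappa \rho)}
		\, d \rho \wedge \vartheta \wedge \wtxm^{n - m} \wedge \Phi(\Theta).
\end{align}
Both terms carry the factor $(- \rho)^{m + 1}$, so $\chi|_{M} = 0$. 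When $m = 0$ the first term vanishes identically since $\wtxm^{n + 1} = 0$, and $\Phi(\Theta)$ is a constant, so the argument simplifies.

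For the normal derivatives I would apply $\wtna_{N}^{p}$ to the last identity. Since $d \rho$, $\vartheta$, and $\wtxm$ are parallel by \cref{eq:parallel-differential-forms}, the left-hand side becomes $(N^{p} \chi)|_{M} \, (d \rho \wedge \vartheta \wedge \wtxm^{n})|_{M}$, while the Leibniz rule expands the right-hand side. The decisive observation is that at least $m + 1$ of the $p$ derivatives must fall on the prefactor $(- \rho)^{m + 1}$ to yield a nonzero boundary contribution; hence at most $p - (m + 1) \leq n - m$ derivatives reach $\Phi(\Theta)$, and likewise at most $n - m$ reach $\kappa$ and $(1 - \kappa \rho)^{-1}$. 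When $m \geq 1$ we have $n - m \leq n - 1$, so the boundary values of the relevant $\wtna_{N}^{j} \Phi(\Theta)$ fall within the range of \cref{lem:normal-derivative-of-renormalized-curvature}, and those of $N^{j} \kappa$ within the range of \cref{prop:normal-derivative-of-GL}. This yields the desired locality of $((N^{p} \chi)|_{M})_{p = 1}^{n + 1}$.

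I expect the main obstacle to be exactly this derivative bookkeeping: one must confirm that the $(- \rho)^{m + 1}$ factor absorbs enough normal derivatives so that no more than $n - 1$ of them ever land on $\Phi(\Theta)$, since \cref{lem:normal-derivative-of-renormalized-curvature} guarantees locality of $\wtna_{N}^{p} \Phi(\Theta)$ only for $p \leq n - 1$. Once this counting is settled, the existence and uniqueness of $F^{\prime}$ and $G^{\prime}$ follow immediately from \cref{thm:solution-of-Dirichlet-problem}.
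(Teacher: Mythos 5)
Your proposal is correct and follows essentially the same route as the paper: define $\chi$ by dividing by $\omega_{+}^{n+1}$, reduce to \cref{thm:solution-of-Dirichlet-problem}, expand $(d d^{c}\rho)^{n-m+1} = (\wtxm + \kappa\, d\rho\wedge\vartheta)^{n-m+1}$ to exhibit the $(-\rho)^{m+1}$ factor forcing $\chi|_{M}=0$, and then use \cref{eq:parallel-differential-forms}, \cref{prop:normal-derivative-of-GL}, and \cref{lem:normal-derivative-of-renormalized-curvature} for the normal derivatives. Your explicit derivative-counting argument (at least $m+1$ derivatives must hit $(-\rho)^{m+1}$, so at most $n-m\leq n-1$ reach $\Phi(\Theta)$ when $m\geq 1$) is exactly the bookkeeping the paper leaves implicit when it cites those two results, and the $m=0$ case is handled the same way in both.
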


\begin{definition}
\label{def:Q_Phi-prime-curvature}
	The \emph{$Q_{\Phi}$-prime curvature} $Q_{\Phi}^{\prime}$ is defined by
	$Q_{\Phi}^{\prime} \coloneqq G^{\prime}|_{M}$.
\end{definition}

\begin{proof}[Proof of \cref{prop:Q-prime-curvature}]
	There exists $\chi \in C^{\infty}(\ovxco)$ such that
	\begin{equation}
		\frac{2}{n - m + 1} \rbra*{\frac{d d^{c} \rho}{- \rho}}^{n - m + 1} \wedge \Phi(\Theta)
		= \chi \omega_{+}^{n + 1}.
	\end{equation}
	By \cref{thm:solution-of-Dirichlet-problem},
	it suffices to show that $\chi|_{M} = 0$
	and $((N^{p} \chi)|_{M})_{p = 1}^{n + 1}$ are expressed by
	$R_{\alpha \ovxb \rho \ovxs}$, $A_{\alpha \beta}$,
	and their covariant derivatives.
	By the definition of $\chi$,
	\begin{align}
		\chi d \rho \wedge \vartheta \wedge \wtxm^{n}
		&= \frac{2}{n + 1} d \rho \wedge \vartheta \wedge \wtxm^{n - m}
			\wedge ((- \rho)^{m + 1} \kappa (1 - \kappa \rho)^{- 1} \Phi(\Theta)) \\
		&\quad + \frac{2}{(n + 1) (n - m + 1)} \wtxm^{n - m + 1}
			\wedge ((- \rho)^{m + 1} (1 - \kappa \rho)^{- 1} \Phi(\Theta)).
	\end{align}
	Since the right hand side is equal to zero on $M$,
	we have $\chi|_{M} = 0$.
	We derive from \cref{eq:parallel-differential-forms} that
	\begin{align}
		&(N^{p} \chi)|_{M} (d \rho \wedge \vartheta \wedge \wtxm^{n})|_{M} \\
		&= \frac{2}{n + 1} [d \rho \wedge \vartheta \wedge \wtxm^{n - m}
			\wedge \wtna_{N}^{p} ((- \rho)^{m + 1} \kappa (1 - \kappa \rho)^{- 1} \Phi(\Theta))]|_{M} \\
		&\quad + \frac{2}{(n + 1) (n - m + 1)} [\wtxm^{n - m + 1}
			\wedge \wtna_{N}^{p} ((- \rho)^{m + 1} (1 - \kappa \rho)^{- 1} \Phi(\Theta))]|_{M}.
	\end{align}
	If $m = 0$,
	then $\Phi$ is a constant
	and $\wtxm^{n + 1} = 0$.
	It follows from \cref{prop:normal-derivative-of-GL} that
	\begin{equation}
		[\wtna_{N}^{p}((- \rho) \kappa (1 - \kappa \rho)^{- 1})]|_{M}
	\end{equation}
	is expressed by $R_{\alpha \ovxb \rho \ovxs}$, $A_{\alpha \beta}$,
	and their covariant derivatives if $1 \leq p \leq n + 1$.
	On the other hand,
	if $1 \leq m \leq n$,
	we obtain from \cref{prop:normal-derivative-of-GL,lem:normal-derivative-of-renormalized-curvature} that
	\begin{equation}
		[\wtna_{N}^{p}((- \rho)^{m + 1} \kappa (1 - \kappa \rho)^{- 1} \Phi(\Theta))]|_{M},
		\qquad
		[\wtna_{N}^{p} ((- \rho)^{m + 1} (1 - \kappa \rho)^{- 1} \Phi(\Theta))]|_{M}
	\end{equation}
	are written in terms of $R_{\alpha \ovxb \rho \ovxs}$, $A_{\alpha \beta}$,
	and their covariant derivatives if $1 \leq p \leq n + 1$.
	Therefore $((N^{p} \chi)|_{M})_{p = 1}^{n + 1}$ are expressed by
	$R_{\alpha \ovxb \rho \ovxs}$, $A_{\alpha \beta}$,
	and their covariant derivatives.
\end{proof}

\begin{example}
	If $\Phi = c \in \bbC$,
	\begin{align}
		2 \rbra*{\frac{d d^{c} \rho}{- \rho}}^{n + 1} \wedge \Phi(\Theta)
		&= - 2 c (n + 1) d \log (- \rho) \wedge d^{c} \log (- \rho) \wedge \omega_{+}^{n}
			+ 2 c \omega_{+}^{n + 1} \\
		&= c (2 - \abs*{d \log (- \rho)}^{2}) \omega_{+}^{n + 1}.
	\end{align}
	Thus we have
	\begin{equation}
		\Box_{+}((n + 1) F^{\prime} + (n + 1) G^{\prime} \rho^{n + 1} \log (- \rho))
		= c \rbra{2 - \abs*{d \log (- \rho)}^{2}} + O(\rho^{n + 2} \log (- \rho)).
	\end{equation}
	It follows from \cite{Hirachi2014}*{(5.5)} that
	\begin{equation}
		Q_{\Phi}^{\prime}
		= \frac{c}{((n + 1)!)^{2}} Q^{\prime},
	\end{equation}
	where $Q^{\prime}$ is the $Q$-prime curvature defined in
	\cites{Case-Yang2013,Hirachi2014}.
\end{example}

The $Q_{\Phi}$-prime curvature has an analogous transformation law to the $Q$-prime curvature.

\begin{proposition}
\label{prop:transformation-law-of-Q-prime-curvature}
	Let $\whxth = e^{\Upsilon} \theta$ be another pseudo-Einstein contact form.
	Then
	\begin{equation}
		e^{(n + 1) \Upsilon} \whQ_{\Phi}^{\prime} 
		= Q_{\Phi}^{\prime} + 2 P_{\Phi}^{\prime} \Upsilon + \calP_{\Phi}(\Upsilon, \Upsilon).
	\end{equation}
\end{proposition}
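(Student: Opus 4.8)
The plan is to follow the proof of \cref{prop:transformation-law-of-P-prime-operator}. Let $\wtxcu$ be the pluriharmonic extension of $\Upsilon$, so $\whxr = e^{\wtxcu}\rho$ is a Fefferman defining function associated with $\whxth$, and both $\omega_{+}$ and $\Phi(\Theta)$ are invariant under this change by \cite{Marugame2021}*{Proposition 4.5(i)}; moreover $O(\whxr^{n + 2}\log(-\whxr)) = O(\rho^{n + 2}\log(-\rho))$. I would write the defining equation of \cref{prop:Q-prime-curvature} for both $\rho$ and $\whxr$ and subtract them. Writing $\widehat{v}^{\prime} = \widehat{F}^{\prime} + \widehat{G}^{\prime}\whxr^{n + 1}\log(-\whxr)$ for the solution relative to $\whxr$ and using $\log(-\whxr) = \wtxcu + \log(-\rho)$, the difference $\widehat{v}^{\prime} - v^{\prime}$ again has the form $A + B\rho^{n + 1}\log(-\rho)$ with $A|_{M} = 0$ and $B = e^{(n + 1)\wtxcu}\widehat{G}^{\prime} - G^{\prime}$, whose boundary value is $e^{(n + 1)\Upsilon}\whQ_{\Phi}^{\prime} - Q_{\Phi}^{\prime}$.

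The heart of the matter is to simplify the source on the left-hand side of the subtracted equation. First I would record the identity
\begin{equation}
	\frac{d d^{c}\rho}{-\rho}
	= \omega_{+} - d\log(-\rho)\wedge d^{c}\log(-\rho),
\end{equation}
coming from $d d^{c}\log(-\rho) = \rho^{-1} d d^{c}\rho - \rho^{-2}\,d\rho\wedge d^{c}\rho$. Since $d\log(-\rho)\wedge d^{c}\log(-\rho)$ squares to zero, the binomial expansion collapses to
\begin{equation}
	\rbra*{\frac{d d^{c}\rho}{-\rho}}^{n - m + 1}
	= \omega_{+}^{n - m + 1} - (n - m + 1)\,\omega_{+}^{n - m}\wedge d\log(-\rho)\wedge d^{c}\log(-\rho),
\end{equation}
and likewise for $\whxr$. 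Subtracting, the common $\omega_{+}^{n - m + 1}\wedge\Phi(\Theta)$ term cancels, and the source reduces to
\begin{equation}
	-2\,\omega_{+}^{n - m}\wedge\rbra*{d\wtxcu\wedge d^{c}\wtxcu + d\wtxcu\wedge d^{c}\log(-\rho) + d\log(-\rho)\wedge d^{c}\wtxcu}\wedge\Phi(\Theta).
\end{equation}

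Next I would split this by bidegree. Since $\Phi(\Theta)$ is of type $(m, m)$ near $M$ and $\omega_{+}^{n - m}$ is of type $(n - m, n - m)$, only the $(1, 1)$-part of each $2$-form survives the wedge with $\omega_{+}^{n - m}\wedge\Phi(\Theta)$. A formal $(p, q)$-computation shows that $d\wtxcu\wedge d^{c}\log(-\rho)$ and $d\log(-\rho)\wedge d^{c}\wtxcu$ have the \emph{same} $(1, 1)$-part, so the two cross terms together contribute $2\,d\log(-\rho)\wedge d^{c}\wtxcu$; combined with $d d^{c}(\wtxcu^{2}) = 2\,d\wtxcu\wedge d^{c}\wtxcu$ (valid since $\wtxcu$ is pluriharmonic) for the first term, the source becomes
\begin{equation}
	-d d^{c}(\wtxcu^{2})\wedge\omega_{+}^{n - m}\wedge\Phi(\Theta)
	- 4\,d\log(-\rho)\wedge d^{c}\wtxcu\wedge\omega_{+}^{n - m}\wedge\Phi(\Theta).
\end{equation}
This is exactly the $\calP_{\Phi}$-source for $(\Upsilon, \Upsilon)$ plus twice the $P_{\Phi}^{\prime}$-source for $\Upsilon$. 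By the uniqueness of $B|_{M}$ in \cref{thm:solution-of-Dirichlet-problem}, together with \cref{prop:calP-operator,prop:P-prime-operator} and the linearity of the source-to-$B|_{M}$ map, I would conclude $e^{(n + 1)\Upsilon}\whQ_{\Phi}^{\prime} - Q_{\Phi}^{\prime} = 2 P_{\Phi}^{\prime}\Upsilon + \calP_{\Phi}(\Upsilon, \Upsilon)$.

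The main obstacle is this bidegree step: it is what forces the two a priori distinct cross terms $d\wtxcu\wedge d^{c}\log(-\rho)$ and $d\log(-\rho)\wedge d^{c}\wtxcu$ to coalesce into a single multiple of the $P_{\Phi}^{\prime}$-source, and it must be carried out near $M$, where $\Phi(\Theta)$ is genuinely of type $(m, m)$. The remaining bookkeeping, namely the cancellation of the $\omega_{+}^{n - m + 1}\wedge\Phi(\Theta)$ term, the $A + B\rho^{n + 1}\log(-\rho)$ structure of $\widehat{v}^{\prime} - v^{\prime}$, and the tracking of the factor $e^{(n + 1)\wtxcu}$, is routine and parallels \cref{prop:transformation-law-of-P-prime-operator}.
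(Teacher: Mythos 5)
Your proposal is correct and takes essentially the same approach as the paper: the identity $(d d^{c}\rho/(-\rho))^{n-m+1} = \omega_{+}^{n-m+1} - (n-m+1)\, d\log(-\rho)\wedge d^{c}\log(-\rho)\wedge\omega_{+}^{n-m}$, the substitution $\log(-\whxr) = \wtxcu + \log(-\rho)$, and the identification of the difference source as the $\calP_{\Phi}$-source for $(\Upsilon,\Upsilon)$ plus twice the $P_{\Phi}^{\prime}$-source for $\Upsilon$, concluded via uniqueness, are exactly the paper's steps. The only differences are cosmetic: the paper forms $\whv^{\prime} - v^{\prime} - 2u^{\prime}$ and applies the uniqueness in \cref{prop:calP-operator} directly, whereas you keep $\whv^{\prime} - v^{\prime}$ and invoke linearity together with \cref{thm:solution-of-Dirichlet-problem}; and the bidegree argument you spell out for merging the two cross terms is used silently in the paper.
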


\begin{proof}
	We first note that
	\begin{equation}
		\rbra*{\frac{d d^{c} \rho}{- \rho}}^{n - m + 1}
		= - (n - m + 1) d \log (- \rho) \wedge d^{c} \log (- \rho) \wedge \omega_{+}^{n - m}
			+ \omega_{+}^{n - m + 1}.
	\end{equation}
	Since both $\theta$ and $\whxth$ are pseudo-Einstein,
	$\Upsilon$ is a CR pluriharmonic function.
	Take its pluriharmonic extension $\wtxcu$.
	Then $\whxr = e^{\wtxcu} \rho$ is a Fefferman defining function associated with $\whxth$.
	Let $v^{\prime} = F^{\prime} + G^{\prime} \rho^{n + 1} \log (- \rho)$
	and $\whv^{\prime} = \whF^{\prime} + \whG^{\prime} \whxr^{n + 1} \log (- \whxr)$ be a solution
	of \cref{eq:def-of-Q-prime-curvature} with respect to $\rho$ and $\whxr$ respectively.
	Take also a solution $u^{\prime} = A^{\prime} + B^{\prime} \rho^{n + 1} \log (- \rho)$
	of \cref{eq:def-of-P-prime-operator} with respect to $\wtxcu$.
	Since $\omega_{+}$ and $\Phi(\Theta)$ are invariant under the change of Fefferman defining functions,
	\begin{align}
		& \sbra*{\Box_{+} \whv^{\prime} + O(\whxr^{n + 2} \log (- \whxr))} \omega_{+}^{n + 1} \\
		&= - 2 d \log (- \whxr) \wedge d^{c} \log (- \whxr) \wedge \omega_{+}^{n - m} \wedge \Phi(\Theta)
			+ \frac{2}{n - m + 1} \omega_{+}^{n - m + 1} \wedge \Phi(\Theta) \\
		&= - 2 d \log (- \rho) \wedge d^{c} \log (- \rho)
			\wedge \omega_{+}^{n - m} \wedge \Phi(\Theta) 
			+ \frac{2}{n - m + 1} \omega_{+}^{n - m + 1} \wedge \Phi(\Theta) \\
		&\quad - 4 d \log (- \rho) \wedge d^{c} \wtxcu
			\wedge \omega_{+}^{n - m} \wedge \Phi(\Theta)
			- 2 d \wtxcu \wedge d^{c} \wtxcu \wedge \omega_{+}^{n - m} \wedge \Phi(\Theta) \\
		&= \sbra{\Box_{+} (v^{\prime} + 2 u^{\prime}) + O(\rho^{n + 2} \log (- \rho))} \omega_{+}^{n + 1} 
			- d d^{c} (\wtxcu^{2}) \wedge \omega_{+}^{n - m} \wedge \Phi(\Theta).
	\end{align}
	This implies that
	\begin{align}
		u
		&\coloneqq \whv^{\prime} - v^{\prime} - 2 u^{\prime} \\
		&= \rbra{\whF^{\prime} - F^{\prime} - 2 A^{\prime}
			+ e^{(n + 1) \wtxcu} \whG^{\prime} \wtxcu \rho^{n + 1}}
			+ \rbra{e^{(n + 1) \wtxcu} \whG^{\prime} - G^{\prime} - 2 B^{\prime}} \rho^{n + 1} \log (- \rho)
	\end{align}
	satisfies the equation
	\begin{equation}
		- d d^{c} (\wtxcu^{2}) \wedge \omega_{+}^{n - m} \wedge \Phi(\Theta)
		= \sbra{\Box_{+} u + O(\rho^{n + 2} \log (- \rho))} \omega_{+}^{n + 1}.
	\end{equation}
	If follows from \cref{prop:calP-operator} that
	\begin{equation}
		e^{(n + 1) \Upsilon} \whQ_{\Phi}^{\prime} - Q_{\Phi}^{\prime} - 2 P_{\Phi}^{\prime} \Upsilon
		= (e^{(n + 1) \wtxcu} \whG^{\prime} - G^{\prime} - 2 B^{\prime})|_{M}
		= \calP_{\Phi}(\Upsilon, \Upsilon),
	\end{equation}
	which completes the proof.
\end{proof}

\begin{proof}[Proof of \cref{thm:total-Q_Phi-prime-curvature}]
	The former statement follows from
	\cref{prop:divergence-of-calP-operator,prop:divergence-of-P-prime-operator,prop:transformation-law-of-Q-prime-curvature}.
	We show the latter statement.
	To this end,
	take $F^{\prime}$ and $G^{\prime}$ as in \cref{prop:Q-prime-curvature}.

	On the one hand,
	\begin{align}
		&\lp \int_{\rho < - \varepsilon} [\Box_{+} (F^{\prime} + G^{\prime} \rho^{n + 1} \log (- \rho))]
			\omega_{+}^{n + 1} \\
		&= - (n + 1) \lp \int_{\rho < - \varepsilon} d d^{c} (F^{\prime} + G^{\prime} \rho^{n + 1} \log (- \rho))
			\wedge \omega_{+}^{n} \\
		&= - (n + 1) \lp \int_{\rho < - \varepsilon}
			d \sbra{d^{c} (F^{\prime} + G^{\prime} \rho^{n + 1} \log (- \rho)) \wedge \omega_{+}^{n}} \\
		&= - (n + 1) \lp \int_{\rho = - \varepsilon} d^{c} (F^{\prime} + G^{\prime} \rho^{n + 1} \log (- \rho))
			\wedge (\varepsilon^{- 1} d \vartheta)^{n} \\
		&= (- 1)^{n + 1} (n + 1)^{2} \ovQ_{\Phi}^{\prime}.
	\end{align}

	On the other hand,
	\begin{align}
		&\lp \int_{\rho < - \varepsilon} \frac{2}{n - m + 1}
			\rbra*{\frac{d d^{c} \rho}{- \rho}}^{n - m + 1} \wedge \Phi(\Theta) \\
		&= - 2 \lp \int_{\rho < - \varepsilon}
			d \log (- \rho) \wedge d^{c} \log (- \rho) \wedge \omega_{+}^{n - m} \wedge \Phi(\Theta) \\
		&\quad + \frac{2}{n - m + 1} \lp \int_{\rho < - \varepsilon} \omega_{+}^{n - m + 1} \wedge \Phi(\Theta).
	\end{align}
	Here
	\begin{align}
		\lp \int_{\rho < - \varepsilon} \omega_{+}^{n - m + 1} \wedge \Phi(\Theta)
		&= \lp \int_{\rho < - \varepsilon} d \sbra*{\frac{\vartheta}{- \rho} \wedge \omega_{+}^{n - m}
			\wedge \Phi(\Theta)} \\
		&= \lp \varepsilon^{- n + m - 1} \int_{\rho = - \varepsilon}
			\vartheta \wedge (d \vartheta)^{n - m} \wedge \Phi(\Theta) \\
		&= 0.
	\end{align}
	Therefore we have the desired equality.
\end{proof}

\section{$\deg \Phi = n$ case}
\label{section:deg-Phi=n-case}

In this section,
we consider the case of $\deg \Phi = n$.
Similar to \cref{prop:P-operator},
the $\calP_{\Phi}$-operator is identically zero.
We would like to compare $P_{\Phi}^{\prime}$ and $Q_{\Phi}^{\prime}$
with $X^{\Phi}_{\alpha}$ and $\calI_{\Phi}^{\prime}$ introduced by Marugame~\cite{Marugame2021}
and Case and the author~\cite{Case-Takeuchi2023}.

We first recall the definitions of $X^{\Phi}_{\alpha}$ and $\calI_{\Phi}^{\prime}$.
It follows from \cite{Marugame2021} that
\begin{gather}
	\Phi(\Theta)|_{T M}
	= S^{\Phi} (d \theta)^{n}
		+ n^{2} (S^{\Phi}_{\alpha \ovin} \theta^{\alpha}
		+ S^{\Phi}_{\infty \ovxb} \theta^{\ovxb}) \wedge \theta \wedge (d \theta)^{n - 1}, \\
	d \theta \wedge (N \contr \Phi(\Theta))|_{T M}
	= n S^{\Phi}_{\infty \ovin} \theta \wedge (d \theta)^{n},
\end{gather}
where $S^{\Phi}$, $S^{\Phi}_{\alpha \ovin}$, $S^{\Phi}_{\infty \ovxb}$, and $S^{\Phi}_{\infty \ovin}$
are written in terms of $S_{\alpha \ovxb \rho \ovxs}$ and its covariant derivatives.
Note that our sign of $N$ is different from Marugame's one.
The $(1, 0)$-form $X^{\Phi}_{\alpha}$ and the $\calI_{\Phi}$-prime curvature $\calI_{\Phi}^{\prime}$ is given by
\begin{equation}
	X^{\Phi}_{\alpha}
	\coloneqq S^{\Phi}_{\alpha \ovin} - \frac{1}{n^{2}} \nabla_{\alpha} S^{\Phi},
	\qquad
	\calI_{\Phi}^{\prime}
	\coloneqq S^{\Phi}_{\infty \ovin} + \frac{\Scal}{n^{2} (n + 1)} S^{\Phi}
		+ \frac{1}{n^{3}} \Delta_{b} S^{\Phi}.
\end{equation}

We next consider a relation between the $P_{\Phi}$-prime operator and $X^{\Phi}_{\alpha}$.
Let $f \in \scrP$
and $\wtf$ be its pluriharmonic extension.
It follows from \cite{Takeuchi2022}*{Lemma 3.1} that
\begin{equation}
	(d^{c} \wtf)|_{T M}
	= \frac{\sqrt{-1}}{2} \rbra*{f_{\ovxb} \theta^{\ovxb}
		- f_{\alpha} \theta^{\alpha}} + \frac{1}{2 n} (\Delta_{b} f) \theta.
\end{equation}
Hence
\begin{equation}
	(d^{c} \wtf \wedge \Phi(\Theta))|_{T M}
	= - \frac{n}{2} \rbra*{S^{\Phi}_{\alpha \ovin} f^{\alpha}
		+ S^{\Phi}_{\infty \ovxb} f^{\ovxb}
		- \frac{1}{n^{2}} S^{\Phi} \Delta_{b} f} (d^{c} \rho \wedge (d d^{c} \rho)^{n})|_{T M}.
\end{equation}
Thus we have
\begin{align}
	&- 2 d \log (- \rho) \wedge d^{c} \wtf \wedge \Phi(\Theta) \\
	&= (- 1)^{n} \frac{n}{n + 1}
		\rbra*{S^{\Phi}_{\alpha \ovin} f^{\alpha}
		+ S^{\Phi}_{\infty \ovxb} f^{\ovxb}
		- \frac{1}{n^{2}} S^{\Phi} \Delta_{b} f} \rho^{n + 1} \omega_{+}^{n + 1} \\
	&\quad + O(\rho^{n + 2}) \omega_{+}^{n + 1}.
\end{align}
\cref{prop:P-prime-operator} and the proof of \cref{thm:solution-of-Dirichlet-problem} yield that
\begin{align}
	P_{\Phi}^{\prime} f
	&= (- 1)^{n + 1} \frac{n}{(n + 1)^{2}}
		\rbra*{S^{\Phi}_{\alpha \ovin} f^{\alpha}
		+ S^{\Phi}_{\infty \ovxb} f^{\ovxb}
		- \frac{1}{n^{2}} S^{\Phi} \Delta_{b} f} \\
	&= (- 1)^{n + 1} \frac{n}{(n + 1)^{2}}
		\sbra*{X^{\Phi}_{\alpha} f^{\alpha}
		+ X^{\Phi}_{\ovxb} f^{\ovxb}
		+ \frac{1}{n^{2}} \rbra*{(S^{\Phi} f_{\alpha})_{,} {}^{\alpha}
		+ (S^{\Phi} f_{\ovxb})_{,} {}^{\ovxb}}}.
	\label{eq:P-prime-and-X}
\end{align}
This means that $P_{\Phi}^{\prime}$ coincides with
a constant multiple of
$X^{\Phi}_{\alpha} \nabla^{\alpha} + X^{\Phi}_{\ovxb} \nabla^{\ovxb}$ modulo a divergence term.
Moreover,
the proof of \cref{thm:symmetric-of-P-prime} implies that
$P_{\Phi}^{\prime}$ is symmetric if and only if
\begin{align}
	&\int_{M} \rbra*{\wtf_{1} d^{c} \wtf_{2} \wedge \Phi(\Theta)}|_{T M} \\
	&= - \frac{n}{2} \int_{M} f_{1} \rbra*{S^{\Phi}_{\alpha \ovin} (f_{2})^{\alpha}
		+ S^{\Phi}_{\infty \ovxb} (f_{2})^{\ovxb}
		- \frac{1}{n^{2}} S^{\Phi} \Delta_{b} f_{2}} \, \theta \wedge (d \theta)^{n}
\end{align}
is symmetric in the indices $1$ and $2$.
\cref{eq:P-prime-and-X} yields that
\begin{align}
	&\int_{M} f_{1} \rbra{S^{\Phi}_{\alpha \ovin} (f_{2})^{\alpha}
		+ S^{\Phi}_{\infty \ovxb} (f_{2})^{\ovxb}
		- \frac{1}{n^{2}} S^{\Phi} \Delta_{b} f_{2}}\, \theta \wedge (d \theta)^{n} \\
	&= \int_{M} f_{1} \rbra{X^{\Phi}_{\alpha} (f_{2})^{\alpha} + X^{\Phi}_{\ovxb} (f_{2})^{\ovxb}}
		\, \theta \wedge (d \theta)^{n} \\
	&\quad - \frac{1}{n^{2}} \int_{M} S^{\Phi}
		\sbra{(f_{1})_{\alpha} (f_{2})^{\alpha} + (f_{1})_{\ovxb} (f_{2})^{\ovxb}}
		\, \theta \wedge (d \theta)^{n}.
\end{align}
Hence $P_{\Phi}^{\prime}$ is symmetric
if and only if so is $X^{\Phi}_{\alpha} \nabla^{\alpha} + X^{\Phi}_{\ovxb} \nabla^{\ovxb}$,
which has been discussed in \cites{Case-Gover2020,Marugame2021,Case-Takeuchi2023}.

We finally discuss the $Q_{\Phi}$-prime curvature and the $\calI_{\Phi}$-prime curvature.
It follows from \cite{Marugame2021}*{Proposition 6.5 and Proof of Theorem 6.6} that
\begin{align}
	[N \contr (d d^{c} \rho \wedge \Phi(\Theta))]|_{T M}
	&= \kappa|_{M} \theta \wedge \Phi(\Theta)|_{T M}
		+ d \theta \wedge (N \contr \Phi(\Theta))|_{T M} \\
	&= n \rbra*{S^{\Phi}_{\infty \ovin}
		+ \frac{\Scal}{n^{2} (n + 1)} S^{\Phi}} (d^{c} \rho \wedge (d d^{c} \rho)^{n})|_{T M}.
\end{align}
Hence
\begin{equation}
	2 \rbra*{\frac{d d^{c} \rho}{- \rho}} \wedge \Phi(\Theta)
	= (- 1)^{n + 1} \frac{2 n}{n + 1}
		\rbra*{S^{\Phi}_{\infty \ovin} + \frac{\Scal}{n^{2} (n + 1)} S^{\Phi}}
		\rho^{n + 1} \omega_{+}^{n + 1}
		+ O(\rho^{n + 2}) \omega_{+}^{n + 1}.
\end{equation}
\cref{prop:Q-prime-curvature} and the proof of \cref{thm:solution-of-Dirichlet-problem} imply that
\begin{align}
	Q_{\Phi}^{\prime}
	&= (- 1)^{n} \frac{2 n}{(n + 1)^{2}}
		\rbra*{S^{\Phi}_{\infty \ovin} + \frac{\Scal}{n^{2} (n + 1)} S^{\Phi}} \\
	&= (- 1)^{n} \frac{2 n}{(n + 1)^{2}} \calI_{\Phi}^{\prime}
	 + (- 1)^{n + 1} \frac{2}{n^{2} (n + 1)^{2}} \Delta_{b} S^{\Phi}.
\end{align}
In particular,
$Q_{\Phi}^{\prime}$ is equal to a constant multiple $\calI_{\Phi}^{\prime}$
modulo a divergence term.

\section{$\deg \Phi = n - 1$ case on Sasakian $\eta$-Einstein manifolds}
\label{section:deg-Phi=n-1-case-on-SE}

In this section,
we consider the case of $\deg \Phi = n - 1$ on Sasakian $\eta$-Einstein manifolds.

Let $(S, T^{1, 0} S, \eta)$ be a $(2 n + 1)$-dimensional Sasakian $\eta$-Einstein manifold
with Einstein constant $(n + 1) \lambda$.
Then
\begin{equation}
\label{eq:Fefferman-defining-function-of-Sasakian-eta-Einstein}
	\rho
	=
	\begin{cases}
		\lambda^{- 1} (r^{2 \lambda} - 1) & \lambda \neq 0, \\
		\log r^{2} & \lambda = 0,
	\end{cases}
\end{equation}
is a Fefferman defining function of $\Set{r < 1}$ in $C(S)$ associated with $\eta$%
~\cite{Takeuchi2018}*{Proposition 3.1}.
Note that
\begin{equation}
	d \rho
	= (1 + \lambda \rho) d \log r^{2},
	\qquad
	\vartheta
	= d^{c} \rho
	= (1 + \lambda \rho) \eta.
\end{equation}
Let $(\eta, \theta^{\alpha}, \theta^{\ovxb})$ be an admissible coframe on $S$.
Then $\theta^{\alpha}$ (resp.\ $\theta^{\ovxb}$) defines a $(1, 0)$-form (resp.\ $(0, 1)$-form) on $C(S)$,
and
\begin{equation}
	d d^{c} \rho
	= \sqrt{- 1}(1 + \lambda \rho) l_{\alpha \ovxb} \theta^{\alpha} \wedge \theta^{\ovxb}
		+ \lambda (1 + \lambda \rho)^{- 1} d \rho \wedge \vartheta.
\end{equation}
In particular,
\begin{equation}
\label{eq:Levi-form-and-transverse-curvature-for-Sasakian-eta-Einstein}
	\wtl_{\alpha \ovxb}
	= (1 + \lambda \rho) l_{\alpha \ovxb},
	\qquad
	\kappa
	= \lambda (1 + \lambda \rho)^{- 1}.
\end{equation}
We compute the Graham-Lee connection with respect to $\rho$.
\cref{eq:str-eq-of-TW-conn1,eq:str-eq-of-TW-conn2} yield that
\begin{align}
	d \theta^{\beta}
	&= \theta^{\alpha} \wedge \omega_{\alpha} {}^{\beta}
	= \theta^{\alpha} \wedge \rbra*{\omega_{\alpha} {}^{\beta}
		+ \frac{1}{2} \lambda (1 + \lambda \rho)^{- 1} d \rho \cdot \delta_{\alpha} {}^{\beta}}
		+ \frac{1}{2} \lambda (1 + \lambda \rho)^{- 1} d \rho \wedge \theta^{\beta}, \\
	d \wtl_{\alpha \ovxb}
	&= \lambda d \rho \cdot l_{\alpha \ovxb}
		+ (1 + \lambda \rho) d l_{\alpha \ovxb} \\
	&= \rbra*{\omega_{\alpha} {}^{\gamma}
		+ \frac{1}{2} \lambda (1 + \lambda \rho)^{- 1}
			d \rho \cdot \delta_{\alpha} {}^{\gamma}} \wtl_{\gamma \ovxb}
		+ \wtl_{\alpha \ovxg} \rbra*{\omega_{\ovxb} {}^{\ovxg}
		+ \frac{1}{2} \lambda (1 + \lambda \rho)^{- 1} d \rho \cdot \delta_{\ovxb} {}^{\ovxg}}.
\end{align}
Hence the uniqueness of the Graham-Lee connection implies
\begin{equation}
\label{eq:GL-connection-and-torsion-for-Sasakian-eta-Einstein}
	\wtxo_{\alpha} {}^{\beta}
	= \omega_{\alpha} {}^{\beta} + \frac{1}{2} \lambda
		(1 + \lambda \rho)^{- 1} d \rho \cdot \delta_{\alpha} {}^{\beta},
	\qquad
	\wtA_{\alpha \beta}
	= 0.
\end{equation}
In particular,
\begin{equation}
\label{eq:normal-derivative-of-(1,0)-tensor}
	\wtna_{N} f_{\alpha}
	= (N f)_{\alpha} - \frac{1}{2} \lambda (1 + \lambda \rho)^{- 1} f_{\alpha}
\end{equation}
for any $f \in C^{\infty}(C(S))$.
The curvature form $\wtxco_{\alpha} {}^{\beta}$ of $\wtna$ is given by
\begin{equation}
	\wtxco_{\alpha} {}^{\beta}
	= \Omega_{\alpha} {}^{\beta}
	= R_{\alpha} {}^{\beta} {}_{\rho \ovxs} \theta^{\rho} \wedge \theta^{\ovxs}.
\end{equation}
Hence
\begin{equation}
	\wtR_{\alpha} {}^{\beta} {}_{\rho \ovxs}
	= R_{\alpha} {}^{\beta} {}_{\rho \ovxs},
	\qquad
	\wtna_{N} \wtR_{\alpha} {}^{\beta} {}_{\rho \ovxs}
	= - \lambda (1 + \lambda \rho)^{- 1} \wtR_{\alpha} {}^{\beta} {}_{\rho \ovxs}.
\end{equation}
Let $\wtS_{\alpha} {}^{\beta} {}_{\rho \ovxs}$ be
the completely trace-free part of $\wtR_{\alpha} {}^{\beta} {}_{\rho \ovxs}$.
This satisfies
\begin{equation}
\label{eq:normal-derivative-of-Chern-tensor}
	\wtS_{\alpha} {}^{\beta} {}_{\rho \ovxs}
	= S_{\alpha} {}^{\beta} {}_{\rho \ovxs},
	\qquad
	\wtna_{N} \wtS_{\alpha} {}^{\beta} {}_{\rho \ovxs}
	= - \lambda (1 + \lambda \rho)^{- 1} \wtS_{\alpha} {}^{\beta} {}_{\rho \ovxs}.
\end{equation}

Consider the renormalized connection with respect to $\rho$.

\begin{lemma}[\cite{Takeuchi2020-Formulae-preprint}*{Lemma 5.1}]
\label{lem:renormalized-connection-for-Sasakian-eta-Einstein}
	For a Fefferman defining function $\rho$
	given by \cref{eq:Fefferman-defining-function-of-Sasakian-eta-Einstein},
	the components of the renormalized curvature satisfy
	\begin{equation}
		\Theta_{\alpha} {}^{\beta}
		= \wtS_{\alpha} {}^{\beta} {}_{\rho \ovxs} \theta^{\rho} \wedge \theta^{\ovxs},
		\qquad
		\Theta_{\infty} {}^{\beta}
		= 0,
		\qquad
		\Theta_{\alpha} {}^{\infty}
		= 0,
		\qquad
		\Theta_{\infty} {}^{\infty}
		= 0.
	\end{equation}
\end{lemma}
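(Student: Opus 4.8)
The plan is to substitute the Sasakian $\eta$-Einstein data into \cref{lem:renormalized-connection} and then evaluate the curvature $\Theta_{a} {}^{b} = d \theta_{a} {}^{b} - \theta_{a} {}^{c} \wedge \theta_{c} {}^{b}$ block by block. First I would record the relevant simplifications coming from \cref{eq:Levi-form-and-transverse-curvature-for-Sasakian-eta-Einstein,eq:GL-connection-and-torsion-for-Sasakian-eta-Einstein}: the torsion $\wtA_{\alpha \beta}$ vanishes; the transverse curvature $\kappa = \lambda (1 + \lambda \rho)^{- 1}$ depends only on $\rho$, so that $\kappa_{\alpha} = \wtZ_{\alpha} \kappa = 0$ and $d \kappa = - \kappa^{2} d \rho$; and $(1 - \kappa \rho)^{- 1} = 1 + \lambda \rho$. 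Plugging these into \cref{lem:renormalized-connection} and using $d \rho = \del \rho + \delb \rho$, the connection forms collapse to
\begin{gather}
	\theta_{\alpha} {}^{\beta}
	= \omega_{\alpha} {}^{\beta} + \kappa \del \rho \cdot \delta_{\alpha} {}^{\beta},
	\qquad
	\theta_{\infty} {}^{\beta}
	= \kappa \wtxth^{\beta}, \\
	\theta_{\alpha} {}^{\infty}
	= - \wtl_{\alpha \ovxg} \wtxth^{\ovxg},
	\qquad
	\theta_{\infty} {}^{\infty}
	= - \kappa \delb \rho.
\end{gather}

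Second, I would dispatch the three components claimed to vanish by direct computation. From \cref{eq:derivative-of-admissible-frame} with $\wtA = 0$ and $\kappa^{\beta} = 0$, the coframe derivative simplifies to $d \wtxth^{\beta} = \wtxth^{\alpha} \wedge \omega_{\alpha} {}^{\beta}$. Inserting this into $\Theta_{\infty} {}^{\beta}$, the connection terms cancel and the remaining scalar terms vanish precisely because $d \kappa = - \kappa^{2} d \rho$; the computation of $\Theta_{\alpha} {}^{\infty}$ is analogous, using in addition the structure equation for $d \wtl_{\alpha \ovxb}$. For $\Theta_{\infty} {}^{\infty}$ the key input is the identity $d \del \rho = \sqrt{- 1} (\wtxm + \kappa \, d \rho \wedge \vartheta)$, which follows from \cref{eq:derivative-of-vartheta} together with $d d^{c} \rho = \sqrt{- 1} \del \delb \rho$; combined with $d \rho \wedge \vartheta = \sqrt{- 1} \del \rho \wedge \delb \rho$, every term cancels.

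Third, for the surviving block the $\kappa \del \rho$ cross-terms drop out of $\theta_{\alpha} {}^{\gamma} \wedge \theta_{\gamma} {}^{\beta}$, so that
\begin{equation}
	\Theta_{\alpha} {}^{\beta}
	= \wtxco_{\alpha} {}^{\beta}
		+ d (\kappa \del \rho) \cdot \delta_{\alpha} {}^{\beta}
		- \kappa \wtl_{\alpha \ovxg} \wtxth^{\beta} \wedge \wtxth^{\ovxg},
\end{equation}
where $\wtxco_{\alpha} {}^{\beta} = R_{\alpha} {}^{\beta} {}_{\rho \ovxs} \theta^{\rho} \wedge \theta^{\ovxs}$ as computed above. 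The same identity for $d \del \rho$ yields $d (\kappa \del \rho) = \sqrt{- 1} \kappa \wtxm = - \kappa \wtl_{\rho \ovxs} \wtxth^{\rho} \wedge \wtxth^{\ovxs}$; substituting and using $\kappa \wtl_{\rho \ovxs} = \lambda l_{\rho \ovxs}$ gives
\begin{equation}
	\Theta_{\alpha} {}^{\beta}
	= \sbra*{R_{\alpha} {}^{\beta} {}_{\rho \ovxs}
		- \lambda \rbra*{\delta_{\alpha} {}^{\beta} l_{\rho \ovxs}
		+ \delta_{\rho} {}^{\beta} l_{\alpha \ovxs}}} \theta^{\rho} \wedge \theta^{\ovxs}.
\end{equation}
It then remains to identify the bracket with the Chern tensor. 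Lowering $\beta$, I would check that $R_{\alpha \ovxb \rho \ovxs} - \lambda (l_{\alpha \ovxb} l_{\rho \ovxs} + l_{\alpha \ovxs} l_{\rho \ovxb})$ is completely trace-free: contracting with $l^{\alpha \ovxb}$ and using $\Ric_{\rho \ovxs} = (n + 1) \lambda l_{\rho \ovxs}$ gives zero, and the symmetries of $R_{\alpha \ovxb \rho \ovxs}$ reduce all other traces to this one. Since this tensor differs from $R_{\alpha \ovxb \rho \ovxs}$ by a pure Levi-form combination, uniqueness of the trace-free decomposition forces it to equal $S_{\alpha \ovxb \rho \ovxs} = \wtS_{\alpha \ovxb \rho \ovxs}$, as claimed.

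The main obstacle is bookkeeping rather than conceptual: correctly deriving $d (\kappa \del \rho) = - \kappa \wtl_{\rho \ovxs} \wtxth^{\rho} \wedge \wtxth^{\ovxs}$ from the \Kahler structure of the cone, and then confirming that the two ``extra'' terms $d (\kappa \del \rho) \delta_{\alpha} {}^{\beta}$ and $- \kappa \wtl_{\alpha \ovxg} \wtxth^{\beta} \wedge \wtxth^{\ovxg}$ assemble into exactly the trace part that the $\eta$-Einstein condition removes from $R$. Keeping track of the index positions and of the $(1, 0)$ and $(0, 1)$ bidegrees throughout is where the computation is most error-prone.
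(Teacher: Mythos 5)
Your proposal is correct, but note that the paper itself contains no proof of this lemma to compare against: it is imported by citation from \cite{Takeuchi2020-Formulae-preprint}*{Lemma 5.1}. Your computation supplies a valid self-contained verification. I checked the key steps: the simplified connection forms $\theta_{\alpha} {}^{\beta} = \omega_{\alpha} {}^{\beta} + \kappa \del \rho \cdot \delta_{\alpha} {}^{\beta}$, $\theta_{\infty} {}^{\beta} = \kappa \wtxth^{\beta}$, $\theta_{\alpha} {}^{\infty} = - \wtl_{\alpha \ovxg} \wtxth^{\ovxg}$, $\theta_{\infty} {}^{\infty} = - \kappa \delb \rho$ are what \cref{lem:renormalized-connection} gives once one inserts $\wtA_{\alpha \beta} = 0$, $\kappa_{\alpha} = 0$, $(1 - \kappa \rho)^{-1} = 1 + \lambda \rho$, and $\del \kappa = - \kappa^{2} \del \rho$; the coframe identity $d \wtxth^{\beta} = \wtxth^{\alpha} \wedge \omega_{\alpha} {}^{\beta}$ holds because the $\frac{1}{2} \kappa \, d\rho$ terms cancel; and the three vanishing blocks and the formula $\Theta_{\alpha} {}^{\beta} = \bigl[R_{\alpha} {}^{\beta} {}_{\rho \ovxs} - \lambda (\delta_{\alpha} {}^{\beta} l_{\rho \ovxs} + \delta_{\rho} {}^{\beta} l_{\alpha \ovxs})\bigr] \theta^{\rho} \wedge \theta^{\ovxs}$ all come out as you describe, with the trace computation against $\Ric_{\rho \ovxs} = (n + 1) \lambda l_{\rho \ovxs}$ confirming trace-freeness and the uniqueness of the trace-free decomposition identifying the bracket with the Chern tensor.

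Two minor points of hygiene, neither of which breaks the argument. First, for $\Theta_{\infty} {}^{\infty}$ you need $d \delb \rho$ rather than $d \del \rho$; of course this is just the conjugate identity $d \delb \rho = - \sqrt{-1} (\wtxm + \kappa \, d\rho \wedge \vartheta)$, but you should state the one you use. Second, your closing equality ``$S_{\alpha \ovxb \rho \ovxs} = \wtS_{\alpha \ovxb \rho \ovxs}$'' is not correct with all indices lowered away from the boundary: lowering with $\wtl_{\alpha \ovxb} = (1 + \lambda \rho) l_{\alpha \ovxb}$ introduces a factor $(1 + \lambda \rho)$. The invariant statement, and the one the lemma actually needs, is the mixed-index identity $\wtS_{\alpha} {}^{\beta} {}_{\rho \ovxs} = S_{\alpha} {}^{\beta} {}_{\rho \ovxs}$, which the paper records in \cref{eq:normal-derivative-of-Chern-tensor}; since your bracket already carries the index $\beta$ raised, your conclusion stands once this is phrased correctly.
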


We obtain a smooth function $\wtcS^{\Phi}$ on $C(S)$ satisfying
\begin{equation}
	\wtxm \wedge \Phi(\Theta)
	= \wtcS^{\Phi} \wtxm^{n},
\end{equation}
which is written in terms of $\wtS_{\alpha \ovxb \rho \ovxs}$ and $\Phi$.
We write $\calS^{\Phi}$ for the boundary value of $\wtcS^{\Phi}$,
which is written in terms of $S_{\alpha \ovxb \rho \ovxs}$ and $\Phi$.
We derive from $\deg \Phi = n - 1$ and \cref{eq:normal-derivative-of-Chern-tensor} that
\begin{equation}
\label{eq:normal-derivative-of-Phi-curvature}
	N \wtcS^{\Phi}
	= - (n - 1) \lambda (1 + \lambda \rho)^{- 1} \wtcS^{\Phi}.
\end{equation}

The \Kahler form $\omega_{+}$ is given by
\begin{equation}
	\omega_{+}
	= - d d^{c} \log (- \rho)
	= \frac{1}{\rho^{2} (1 + \lambda \rho)} d \rho \wedge d^{c} \rho
		+ \frac{1}{- \rho} \wtxm.
\end{equation}
Note that
\begin{equation}
	\omega_{+}^{n + 1}
	= (- 1)^{n} \frac{n + 1}{\rho^{n + 2} (1 + \lambda \rho)} d \rho \wedge d^{c} \rho \wedge \wtxm^{n}.
\end{equation}
Consider the $\delb$-Laplacian $\Box_{+}$ with respect to $\omega_{+}$.
It follows from \cref{eq:formula-of-Box_+} that
\begin{equation}
\label{eq:formula-of-Box-on-SE}
	\Box_{+} u
	= - \rho^{2} (1 + \lambda \rho) \rbra*{N^{2} u + \frac{1}{4} \wtT^{2} u
		+ \frac{\lambda}{1 + \lambda \rho} N u}
		- \frac{\rho}{2} \wtxcd_{b} u + n \rho N u.
\end{equation}
This implies that
\begin{equation}
\label{eq:Box-n-th}
	\Box_{+} (A \rho^{n})
	= n A \rho^{n} + \rbra*{- n^{2} \lambda A - \frac{1}{2} \wtxcd_{b} A} \rho^{n + 1} + O(\rho^{n + 2}),
\end{equation}
and
\begin{equation}
\label{eq:Box-log}
	\Box_{+} (B \rho^{n + 1} \log (- \rho))
	= - (n + 1) B \rho^{n + 1} + O(\rho^{n + 2} \log (- \rho))
\end{equation}
for $A, B \in C^{\infty}(C(S))$ with $N A = N B = 0$ near $S$.


Now we consider the $P_{\Phi}$-prime operator.

\begin{proposition}
	The $P_{\Phi}$-prime operator $P_{\Phi}^{\prime}$ on $(S, T^{1, 0} S, \eta)$ is given by
	\begin{equation}
		P_{\Phi}^{\prime} f
		= \frac{(- 1)^{n - 1}}{2 n^{2} (n + 1)^{2}}
			\sbra*{(\Delta_{b}^{2} f) \calS^{\Phi}
			+ 2 n \lambda (\Delta_{b} f) \calS^{\Phi}
			+ \Delta_{b} ((\Delta_{b} f) \calS^{\Phi})}.
	\end{equation}
\end{proposition}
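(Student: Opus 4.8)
The plan is to unwind \cref{def:P_Phi-prime-operator} in the present setting, where $m = n - 1$ and hence $\omega_{+}^{n - m} = \omega_{+}$, and then to solve the resulting Dirichlet problem using the explicit formulas available on a Sasakian $\eta$-Einstein manifold. First I would rewrite the left-hand side of \cref{eq:def-of-P-prime-operator}, namely $- 2 d \log (- \rho) \wedge d^{c} \wtf \wedge \omega_{+} \wedge \Phi(\Theta)$, as $\chi \omega_{+}^{n + 1}$ for an explicit $\chi$. The crucial simplification comes from \cref{lem:renormalized-connection-for-Sasakian-eta-Einstein}: since $\Theta_{\infty}{}^{\beta} = \Theta_{\alpha}{}^{\infty} = \Theta_{\infty}{}^{\infty} = 0$, the form $\Phi(\Theta)$ is purely horizontal, so in the wedge only the $\wtxm$-part of $\omega_{+}$ and only the $\vartheta$-component of $d^{c} \wtf$ survive; the latter equals $d^{c} \wtf(\wtT) = N \wtf$ by a short computation using $\wtT = \sqrt{- 1}(\wtZ_{\infty} - \wtZ_{\ovin})$ and $N = \Re \wtZ_{\infty}$. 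Together with $\wtxm \wedge \Phi(\Theta) = \wtcS^{\Phi} \wtxm^{n}$, the formula for $\wtxm$, and the expression for $\omega_{+}^{n + 1}$, this yields $\chi = (- 1)^{n} \frac{2}{n + 1} (1 + \lambda \rho) \wtcS^{\Phi} (N \wtf) \rho^{n}$, which is $O(\rho^{n})$.

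Next I would solve $\Box_{+} u^{\prime} = \chi + O(\rho^{n + 2} \log (- \rho))$ with $u^{\prime} = A^{\prime} + B^{\prime} \rho^{n + 1} \log (- \rho)$ order by order in $\rho$, using \cref{eq:Box-n-th,eq:Box-log}. Writing $g \coloneqq 2 (1 + \lambda \rho) \wtcS^{\Phi} N \wtf$, so that $\chi = (- 1)^{n}(n + 1)^{-1}(g_{0} \rho^{n} + g_{1} \rho^{n + 1} + O(\rho^{n + 2}))$ with $g_{0} = g|_{M}$ and $g_{1} = (N g)|_{M}$, the ansatz $A^{\prime} = A_{n} \rho^{n}$ with $N A_{n} = 0$ matches the $\rho^{n}$ term via $n A_{n} = (- 1)^{n}(n + 1)^{-1} g_{0}$, while the $\rho^{n + 1}$ term forces $- n^{2} \lambda A_{n} - \frac{1}{2} \Delta_{b} A_{n} - (n + 1) B^{\prime} = (- 1)^{n}(n + 1)^{-1} g_{1}$. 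Since a pure power $\rho^{n + 1}$ contributes nothing to $\Box_{+}$ at order $\rho^{n + 1}$, this obstruction is exactly what the $\log$ term absorbs, and $P_{\Phi}^{\prime} f = B^{\prime}|_{M}$ is read off. Everything thus reduces to the boundary data $N \wtf|_{M}$, $N^{2} \wtf|_{M}$, and $(N \wtcS^{\Phi})|_{M}$.

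These I would obtain as follows. From \cite{Takeuchi2022}*{Lemma 3.1} the $\theta$-component of $(d^{c} \wtf)|_{T M}$ is $\frac{1}{2 n} \Delta_{b} f$, which by $d^{c} \wtf(\wtT) = N \wtf$ gives $N \wtf|_{M} = \frac{1}{2 n} \Delta_{b} f$. For the second normal derivative I would use that $\wtf$ is pluriharmonic, so $d d^{c} \wtf(N, \wtT) = 0$; since $\kappa_{\alpha} = 0$ here and $\kappa|_{M} = \lambda$ by \cref{eq:Levi-form-and-transverse-curvature-for-Sasakian-eta-Einstein}, \cref{eq:dd^c(NT)} collapses to $N^{2} \wtf|_{M} = - \lambda N \wtf|_{M} - \frac{1}{4} \wtT^{2} \wtf|_{M}$, with $\wtT^{2} \wtf|_{M} = T^{2} f$ because $\wtT$ is tangent to $M$ and restricts to the Reeb field there. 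Finally $(N \wtcS^{\Phi})|_{M} = - (n - 1) \lambda \calS^{\Phi}$ is \cref{eq:normal-derivative-of-Phi-curvature}. Substituting these and simplifying (the purely $\lambda$-proportional contributions combine to $- \frac{\lambda}{n} \calS^{\Phi} \Delta_{b} f$) produces $P_{\Phi}^{\prime} f$ up to a single leftover term proportional to $\calS^{\Phi} T^{2} f$.

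The main obstacle is precisely this last input: $T^{2} f$ must be expressed through tangential operators, and the claim is that $T^{2} f = - n^{-2} \Delta_{b}^{2} f$ for every $f \in \scrP$ on a Sasakian manifold. I would prove it by writing $f$ locally as $\Re h$ with $h$ CR holomorphic, so $h_{\ovxb} = 0$; the pseudo-Hermitian commutation $h_{\alpha \ovxb} - h_{\ovxb \alpha} = \sqrt{- 1} l_{\alpha \ovxb} (T h)$ (torsion-free, since the manifold is Sasakian) gives upon tracing $\Delta_{b} h = - \sqrt{- 1} n \, T h$, that is $T h = \frac{\sqrt{- 1}}{n} \Delta_{b} h$. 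As $T$ preserves the Sasakian structure it commutes with $\Delta_{b}$, so iterating yields $T^{2} h = - n^{-2} \Delta_{b}^{2} h$; the same holds for $\overline{h}$, and hence for $f$. Feeding $T^{2} f = - n^{-2} \Delta_{b}^{2} f$ into the previous step turns the leftover term into $- \frac{1}{2 n^{2}} \calS^{\Phi} \Delta_{b}^{2} f$, and collecting all contributions gives exactly
\begin{equation}
	P_{\Phi}^{\prime} f
	= \frac{(- 1)^{n - 1}}{2 n^{2} (n + 1)^{2}}
		\left[ (\Delta_{b}^{2} f) \calS^{\Phi} + 2 n \lambda (\Delta_{b} f) \calS^{\Phi}
		+ \Delta_{b} ((\Delta_{b} f) \calS^{\Phi}) \right].
\end{equation}
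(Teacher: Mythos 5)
Your proposal is correct and follows essentially the same route as the paper: identify $-2\,d\log(-\rho)\wedge d^{c}\wtf\wedge\omega_{+}\wedge\Phi(\Theta)$ as $(-1)^{n}\tfrac{2}{n+1}(1+\lambda\rho)(N\wtf)\wtcS^{\Phi}\rho^{n}\,\omega_{+}^{n+1}$, then solve $\Box_{+}(A'\rho^{n}+B'\rho^{n+1}\log(-\rho))=\chi+O(\rho^{n+2}\log(-\rho))$ via the same two simultaneous boundary equations coming from \cref{eq:Box-n-th,eq:Box-log}, with the same inputs $N\wtf|_{S}=\tfrac{1}{2n}\Delta_{b}f$, $N^{2}\wtf|_{S}=-\tfrac14\wtT^{2}\wtf|_{S}-\lambda N\wtf|_{S}$, and \cref{eq:normal-derivative-of-Phi-curvature}. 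The only deviations are cosmetic: you derive $N\wtf|_{S}$ from the restriction formula for $d^{c}\wtf$ rather than from $\Box_{+}\wtf=0$, and you supply a self-contained proof of the identity $(\Delta_{b}^{2}+n^{2}T^{2})f=0$ for $f\in\scrP$ (via $f=\Re h$ and $[T,\Delta_{b}]=0$ on Sasakian manifolds), which the paper simply cites from Graham--Lee.
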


\begin{proof}
	Let $f \in \scrP$ and take its pluriharmonic extension $\wtf$.
	Then
	\begin{align}
		- 2 d \log (- \rho) \wedge d^{c} \wtf \wedge \omega_{+} \wedge \Phi(\Theta)
		&= - 2 d \wtf \wedge d^{c} \log (- \rho) \wedge \omega_{+} \wedge \Phi(\Theta) \\
		&= \frac{2}{\rho^{2}} (N \wtf) d \rho \wedge d^{c} \rho \wedge \wtxm \wedge \Phi(\Theta) \\
		&= \frac{2}{\rho^{2}} (N \wtf) \wtcS^{\Phi} d \rho \wedge d^{c} \rho \wedge \wtxm^{n} \\
		&= (- 1)^{n} \frac{2 (1 + \lambda \rho)}{n + 1} (N \wtf) \wtcS^{\Phi} \rho^{n} \omega_{+}^{n + 1}.
	\end{align}
	Set
	\begin{equation}
		\varphi
		\coloneqq (- 1)^{n} \frac{2 (1 + \lambda \rho)}{n + 1} (N \wtf) \wtcS^{\Phi}.
	\end{equation}
	It follows from $d d^{c} \wtf = 0$ and \cref{eq:formula-of-Box-on-SE} that
	$(N \wtf)|_{S} = (2 n)^{- 1} \Delta_{b} f$.
	Moreover,
	\cref{eq:dd^c(NT)} and $(\Delta_{b}^{2} + n^{2} T^{2}) f = 0$ (see~\cite{Graham-Lee1988}*{Section 3})
	imply that
	\begin{equation}
		(N^{2} \wtf)|_{S}
		= - \frac{1}{4} (\wtT^{2} \wtf)|_{S} - \lambda (N \wtf)|_{S}
		= \frac{1}{4 n^{2}} \Delta_{b}^{2} f - \frac{\lambda}{2 n} \Delta_{b} f.
	\end{equation}
	Thus we have
	\begin{equation}
		\varphi|_{S}
		= (- 1)^{n} \frac{2}{n + 1} (N \wtf)|_{S} \wtcS^{\Phi}|_{S}
		= (- 1)^{n} \frac{1}{n (n + 1)} (\Delta_{b} f) \calS^{\Phi},
	\end{equation}
	and
	\begin{align}
		(N \varphi)|_{S}
		&= (- 1)^{n} \frac{2 \lambda}{n + 1} (N \wtf)|_{S} \wtcS^{\Phi}|_{S}
			+ (- 1)^{n} \frac{2}{n + 1} (N^{2} \wtf)|_{S} \wtcS^{\Phi}|_{S} \\
		&\quad + (- 1)^{n} \frac{2}{n + 1} (N \wtf)|_{S} (N \wtcS^{\Phi})|_{S} \\
		&= (- 1)^{n} \frac{\lambda}{n (n + 1)} (\Delta_{b} f) \calS^{\Phi}
			+ (- 1)^{n} \frac{1}{2 n^{2} (n + 1)} \rbra*{\Delta_{b}^{2} f - 2 n \lambda \Delta_{b} f}
			\calS^{\Phi} \\
		&\quad + (- 1)^{n + 1} \frac{(n - 1) \lambda}{n (n + 1)} (\Delta_{b} f) \calS^{\Phi} \\
		&= (- 1)^{n} \frac{1}{2 n^{2} (n + 1)} (\Delta_{b}^{2} f) \calS^{\Phi}
			+ (- 1)^{n + 1} \frac{(n - 1) \lambda}{n (n + 1)} (\Delta_{b} f) \calS^{\Phi}.
	\end{align}
	For the computation of $P_{\Phi}^{\prime} f$,
	it suffices to find $A^{\prime}, B^{\prime} \in C^{\infty}(C(S))$ such that
	$N A = N B = 0$ near $S$ and
	\begin{equation}
		\Box_{+} \rbra*{A^{\prime} \rho^{n} + B^{\prime} \rho^{n + 1} \log(- \rho)}
		= \varphi \rho^{n} + O(\rho^{n + 2} \log(- \rho)).
	\end{equation}
	This implies the simultaneous equations
	\begin{equation}
		n A^{\prime}|_{S}
		= \varphi|_{S},
		\qquad
		- n^{2} \lambda A^{\prime}|_{S} - \frac{1}{2} \Delta_{b} A^{\prime}|_{S} - (n + 1) B^{\prime}|_{S}
		= (N \varphi)|_{S}.
	\end{equation}
	It follows from the definition of $P_{\Phi}^{\prime} f$ that
	\begin{equation}
		P_{\Phi}^{\prime} f
		= B^{\prime}|_{S}
		= \frac{(- 1)^{n - 1}}{2 n^{2} (n + 1)^{2}}
			\sbra*{(\Delta_{b}^{2} f) \calS^{\Phi}
			+ 2 n \lambda (\Delta_{b} f) \calS^{\Phi}
			+ \Delta_{b} ((\Delta_{b} f) \calS^{\Phi})},
	\end{equation}
	which completes the proof.
\end{proof}

We next consider the $Q_{\Phi}$-prime curvature.

\begin{proposition}
	The $Q_{\Phi}$-prime curvature $Q_{\Phi}^{\prime}$ on $(S, T^{1, 0} S, \eta)$ is given by
	\begin{equation}
		Q_{\Phi}^{\prime}
		= \frac{(- 1)^{n - 1}}{n (n + 1)^{2}}
			\rbra*{2 n \lambda^{2} \calS^{\Phi} + \lambda \Delta_{b} \calS^{\Phi}}.
	\end{equation}
\end{proposition}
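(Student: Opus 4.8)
The plan is to mimic the proof of the preceding proposition for $P_{\Phi}^{\prime}$, with the left-hand side of \cref{eq:def-of-Q-prime-curvature} playing the role that $-2 d \log (- \rho) \wedge d^{c} \wtf \wedge \omega_{+} \wedge \Phi(\Theta)$ played there. Since $\deg \Phi = n - 1$ we have $n - m + 1 = 2$, so by \cref{prop:Q-prime-curvature,def:Q_Phi-prime-curvature} it suffices to find $A^{\prime}, B^{\prime} \in C^{\infty}(C(S))$ with $N A^{\prime} = N B^{\prime} = 0$ near $S$ such that $v^{\prime} = A^{\prime} \rho^{n} + B^{\prime} \rho^{n + 1} \log (- \rho)$ (so $F^{\prime} = A^{\prime} \rho^{n}$ vanishes on $S$ and $G^{\prime} = B^{\prime}$) solves \cref{eq:def-of-Q-prime-curvature}; then $Q_{\Phi}^{\prime} = B^{\prime}|_{S}$.

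First I would rewrite the left-hand side as $\varphi \rho^{n} \omega_{+}^{n + 1}$ for an explicit $\varphi$. Using the Sasakian splitting $d d^{c} \rho = \wtxm + \lambda (1 + \lambda \rho)^{- 1} d \rho \wedge \vartheta$ computed above, the square $(d d^{c} \rho / (- \rho))^{2}$ has three contributions, and the $(d \rho \wedge \vartheta)^{2}$ term drops. By \cref{lem:renormalized-connection-for-Sasakian-eta-Einstein} the form $\Phi(\Theta)$ is a polynomial in $\Theta_{\alpha} {}^{\beta} = \wtS_{\alpha} {}^{\beta} {}_{\rho \ovxs} \theta^{\rho} \wedge \theta^{\ovxs}$, hence an $(n - 1, n - 1)$-form built solely from $\theta^{\alpha}, \theta^{\ovxb}$; since $\wtxm$ is built from the same forms and there are only $n$ of bidegree $(1, 0)$, the term $\wtxm^{2} \wedge \Phi(\Theta)$ has bidegree $(n + 1, n + 1)$ in these directions and therefore vanishes. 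Only the cross term survives, and after inserting $\wtxm \wedge \Phi(\Theta) = \wtcS^{\Phi} \wtxm^{n}$ together with the formula for $\omega_{+}^{n + 1}$ I expect
\[
	\rbra*{\frac{d d^{c} \rho}{- \rho}}^{2} \wedge \Phi(\Theta)
	= (- 1)^{n} \frac{2 \lambda}{n + 1} \wtcS^{\Phi} \rho^{n} \omega_{+}^{n + 1},
\]
so that $\varphi = (- 1)^{n} \tfrac{2 \lambda}{n + 1} \wtcS^{\Phi}$.

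I would then match orders in $\rho$. From \cref{eq:Box-n-th,eq:Box-log} the equation $\Box_{+} v^{\prime} = \varphi \rho^{n} + O(\rho^{n + 2} \log (- \rho))$, expanded as $\varphi \rho^{n} = \varphi|_{S} \rho^{n} + (N \varphi)|_{S} \rho^{n + 1} + O(\rho^{n + 2})$, yields the simultaneous equations $n A^{\prime}|_{S} = \varphi|_{S}$ and $- n^{2} \lambda A^{\prime}|_{S} - \tfrac{1}{2} \Delta_{b} A^{\prime}|_{S} - (n + 1) B^{\prime}|_{S} = (N \varphi)|_{S}$. The values $\varphi|_{S} = (- 1)^{n} \tfrac{2 \lambda}{n + 1} \calS^{\Phi}$ and, via \cref{eq:normal-derivative-of-Phi-curvature}, $(N \varphi)|_{S} = (- 1)^{n + 1} \tfrac{2 (n - 1) \lambda^{2}}{n + 1} \calS^{\Phi}$ then determine $A^{\prime}|_{S}$ and hence $B^{\prime}|_{S}$ after substitution and simplification.

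I expect the only real obstacle to be the bookkeeping of the $\rho^{n + 1}$-order coefficient: one must combine the three contributions $- n^{2} \lambda A^{\prime}|_{S}$, $- \tfrac{1}{2} \Delta_{b} A^{\prime}|_{S}$, and $- (N \varphi)|_{S}$ with correct signs. The crux is that the $\eta$-Einstein normal-derivative identity \cref{eq:normal-derivative-of-Phi-curvature} is exactly what produces the cancellation $2 n - 2 (n - 1) = 2$ in the coefficient of $\lambda^{2} \calS^{\Phi}$; this is where the Sasakian $\eta$-Einstein structure enters decisively and gives the stated clean form $Q_{\Phi}^{\prime} = \tfrac{(- 1)^{n - 1}}{n (n + 1)^{2}} (2 n \lambda^{2} \calS^{\Phi} + \lambda \Delta_{b} \calS^{\Phi})$.
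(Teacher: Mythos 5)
Your proposal is correct and follows essentially the same route as the paper: reduce $(d d^{c} \rho / (- \rho))^{2} \wedge \Phi(\Theta)$ to the cross term $2 \lambda (1 + \lambda \rho)^{- 1} d \rho \wedge \vartheta \wedge \wtxm \wedge \Phi(\Theta) = (- 1)^{n} \tfrac{2 \lambda}{n + 1} \wtcS^{\Phi} \rho^{n} \omega_{+}^{n + 1}$, then match the $\rho^{n}$ and $\rho^{n + 1}$ coefficients via \cref{eq:Box-n-th,eq:Box-log} and the normal-derivative identity \cref{eq:normal-derivative-of-Phi-curvature}. Your simultaneous equations, boundary values, and the cancellation $2 n - 2 (n - 1) = 2$ in the $\lambda^{2} \calS^{\Phi}$ coefficient all agree with the paper's computation.
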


\begin{proof}
	\begin{align}
		\rbra*{\frac{d d^{c} \rho}{- \rho}}^{2} \wedge \Phi(\Theta)
		&= \frac{2 \lambda}{\rho^{2} (1 + \lambda \rho)}
			d \rho \wedge d^{c} \rho \wedge \wtxm \wedge \Phi(\Theta) \\
		&= (- 1)^{n} \frac{2 \lambda}{n + 1} \wtcS^{\Phi} \rho^{n} \omega_{+}^{n + 1}.
	\end{align}
	Set
	\begin{equation}
		\psi
		\coloneqq (- 1)^{n} \frac{2 \lambda}{n + 1} \wtcS^{\Phi}.
	\end{equation}
	By \cref{eq:normal-derivative-of-Phi-curvature},
	we have
	\begin{equation}
		\psi|_{S}
		= (- 1)^{n} \frac{2 \lambda}{n + 1} \calS^{\Phi},
		\qquad
		(N \psi)|_{S}
		= (- 1)^{n - 1} \frac{2 (n - 1) \lambda^{2}}{n + 1} \calS^{\Phi}.
	\end{equation}
	For the computation of $Q_{\Phi}^{\prime}$,
	it suffices to find $F^{\prime}, G^{\prime} \in C^{\infty}(C(S))$ such that
	\begin{equation}
		\Box_{+} \rbra*{F^{\prime} \rho^{n} + G^{\prime} \rho^{n + 1} \log(- \rho)}
		= \psi \rho^{n} + O(\rho^{n + 2} \log(- \rho)).
	\end{equation}
	This implies the simultaneous equations
	\begin{equation}
		n F^{\prime}|_{S}
		= \psi|_{S},
		\qquad
		- n^{2} \lambda F^{\prime}|_{S} - \frac{1}{2} \Delta_{b} F^{\prime}|_{S} - (n + 1) G^{\prime}|_{S}
		= (N \psi)|_{S}.
	\end{equation}
	It follows from the definition of $Q_{\Phi}^{\prime}$ that
	\begin{equation}
		Q_{\Phi}^{\prime}
		= G^{\prime}|_{S}
		= \frac{(- 1)^{n - 1}}{n (n + 1)^{2}}
			\rbra*{2 n \lambda^{2} \calS^{\Phi} + \lambda \Delta_{b} \calS^{\Phi}},
	\end{equation}
	which completes the proof.
\end{proof}

\section*{Acknowledgements}

The author would like to thank Kengo Hirachi and Taiji Marugame for helpful comments.
He would also like to thank the referee for a careful reading and some valuable suggestions.

\bibliography{my-reference,my-reference-preprint}

\end{document}